\numberwithin{equation}{section}
\newcommand{\N}{\mathbb{N}}
\newcommand{\R}{\mathbb{R}}
\newcommand{\sfd}{{\sf d}}
\renewcommand{\d}{{\mathrm d}}
\newcommand{\e}{{\rm e}}
\newcommand{\X}{{\rm X}}
\newcommand{\mm}{\mathfrak{m}}
\newcommand{\1}{\mathbbm 1}
\newcommand{\LIP}{{\rm LIP}}
\newcommand{\Lip}{{\rm Lip}}
\newcommand{\lip}{{\rm lip}}
\newcommand{\ppi}{{\mbox{\boldmath\(\pi\)}}}
\newcommand{\eeta}{{\mbox{\boldmath\(\eta\)}}}
\newcommand{\sppi}{{\mbox{\scriptsize\boldmath\(\pi\)}}}
\newcommand{\seeta}{{\mbox{\scriptsize\boldmath\(\eta\)}}}
\newcommand{\limi}{\varliminf}
\newcommand{\lims}{\varlimsup}
\newcommand{\fr}{\penalty-20\null\hfill\(\blacksquare\)}
\newtheorem{theorem}{Theorem}[section]
\newtheorem{corollary}[theorem]{Corollary}
\newtheorem{lemma}[theorem]{Lemma}
\newtheorem{proposition}[theorem]{Proposition}
\newtheorem{definition}[theorem]{Definition}
\newtheorem{remark}[theorem]{Remark}
\title{On master test plans for the space of BV functions}
\author[Francesco Nobili]{Francesco Nobili}\address[Francesco Nobili]
{SISSA, Via Bonomea 265, 34136 Trieste, Italy.}
\email{fnobili@sissa.it}
\author[Enrico Pasqualetto]{Enrico Pasqualetto}
\address[Enrico Pasqualetto]{Scuola Normale Superiore, Piazza dei
Cavalieri, 7, 56126 Pisa, Italy.}
\email{enrico.pasqualetto@sns.it}
\author[Timo Schultz]{Timo Schultz}\address[Timo Schultz]
{Fakultat f\"{u}r Mathematik, Universit\"{a}t Bielefeld,
Postfach 100131, 33501 Bielefeld, Germany, {\sl and,}
Institute for Applied Mathematics, University of Bonn, Endenicher Allee 60, 53115 Bonn, Germany
}
\email{tschultz@math.uni-bielefeld.de}
\date{\today}
\keywords{Function of bounded variation, $\sf CD$ space, test plan}
\subjclass[2010]{53C23,	26A45}
\begin{document}
\begin{abstract}
We prove that on an arbitrary metric measure space a countable
collection of test plans is sufficient to recover all \(\rm BV\)
functions and their total variation measures. In the setting of
non-branching \({\sf CD}(K,N)\) spaces (with finite reference
measure), we can additionally require these test plans to be
concentrated on geodesics.
\end{abstract}
\maketitle
\allowdisplaybreaks
\tableofcontents
\section{Introduction}
The first notion of a \emph{function of bounded variation}
(or just a \emph{BV function} for short) in the setting of metric
measure spaces dates back to a paper by M.\ Miranda Jr., published
almost 20 years ago \cite{Miranda03}. Since then, several equivalent
definitions of BV function have been introduced and studied in the
literature. The most relevant one for the purposes of the present
paper is the one proposed by L.\ Ambrosio and
S.\ Di Marino in \cite{Ambrosio-DiMarino14}, which we are going to
describe informally. 
While the original approach in \cite{Miranda03}
is of `Eulerian' nature as it is based upon a relaxation procedure
using Lipschitz functions, the definition in \cite{Ambrosio-DiMarino14}
is `Lagrangian' as it ultimately looks at the behaviour of functions
along well-chosen curves. 
The motivation of the latter approach comes from a well established BV-theory in the classical Euclidean space. In this standard framework, BV functions are known to behave well under one dimensional restrictions (see \cite{AmbrosioFuscoPallara} for a thorough discussion). In particular, by the use of \emph{slicing} techniques, it is not only possible to see that a function is of bounded variation if and only if its restriction to all lines parallel to the coordinate axes is of bounded variation, but it is also possible to reconstruct the \emph{full} total variation as a superposition of one dimensional variations of the sliced functions. In the absence of smooth coordinates, the key concept to mimic this characterisation is that
of a test plan, originally introduced in \cite{AmbrosioGigliSavare11,
AmbrosioGigliSavare11-3}. Given an arbitrary metric measure space
\((\X,\sfd,\mm)\), we say that a Borel probability measure \(\ppi\)
on the space of continuous curves \(C([0,1],\X)\) is a
\emph{\(\infty\)-test plan} (cf.\ Definition \ref{def:test_plan})
provided it is concentrated on \(\rm L\)-Lipschitz curves (for some
constant \({\rm L}\geq 0\)) and it satisfies
\((\e_t)_\#\ppi\leq{\rm C}\mm\) for all \(t\in[0,1]\),
where \({\rm C}>0\) is a constant independent of \(t\), while
\(\e_t\colon C([0,1],\X)\to\X\) stands for the evaluation map
\(\gamma\mapsto\gamma_t\). The quantities \({\rm Lip}(\ppi)\) and
\({\rm Comp}(\ppi)\) are then defined as the minimal such \(\rm L\)
and \(\rm C\), respectively. At this point, for any given family
\(\Pi\) of \(\infty\)-test plans we declare (cf.\ Definition
\ref{def:BV}) that a function \(f\in L^1(\mm)\)
\emph{has bounded \(\Pi\)-variation} (\(f\in{\rm BV}_\Pi(\X)\)
for short) if there exists a finite Borel measure \(\mu\) on \(\X\)
satisfying (in the sense of measures) the following inequality:
\begin{equation}\label{eq:intro_BV}
\int\gamma_\#|D(f\circ\gamma)|\,\d\ppi(\gamma)
\leq{\rm Comp}(\ppi){\rm Lip}(\ppi)\mu,\quad\text{ for every }
\ppi\in\Pi.
\end{equation}
The minimal such measure \(\mu\) is denoted by
\(|{\boldsymbol D}f|_\Pi\) and called the \emph{total \(\Pi\)-variation
measure} of \(f\). When \(\Pi\) is the family of all
\(\infty\)-test plans, we recover the notion of BV space
\({\rm BV}(\X)\) introduced in \cite{Ambrosio-DiMarino14}.
This choice of terminology supports the goal of the present paper: roughly speaking, we aim at reducing the number of \(\infty\)-test plans needed in \eqref{eq:intro_BV} to
detect both a BV function and its total variation measures. More
specifically, the two main results of the paper are the following ones:
\begin{itemize}
\item[\(\rm i)\)] On any metric measure space, there exists
a countable family \(\Pi\) of \(\infty\)-test plans such that
\({\rm BV}_\Pi(\X)={\rm BV}(\X)\) and \(|{\boldsymbol D}f|_\Pi=
|{\boldsymbol D}f|\) for every \(f\in{\rm BV}(\X)\). For brevity, we
just say that \(\Pi\) is a \emph{master family} for \({\rm BV}(\X)\).
\item[\(\rm ii)\)] On \({\sf CD}(K,N)\) metric measure
spaces (whose reference measure has finite mass) that are also \emph{non-branching} (see Definition \ref{def:CDq} and \eqref{eq:nonbranching} below), the family of those
\(\infty\)-test plans that are concentrated on geodesics constitutes
a master family for \({\rm BV}(\X)\).
\end{itemize}
Before passing to a more detailed description of i) and ii),
we mention other two achievements:
\begin{itemize}
\item[\(\rm iii)\)] We introduce yet another notion of BV space,
which we call the \emph{curvewise BV space}. We prove that (on
arbitrary metric measure spaces) this new notion is equivalent
to the others, and i) yields existence of a single master test
plan in the curvewise sense.
\item[\(\rm iv)\)] In the setting of \({\sf RCD}(K,N)\) metric measure
spaces, we construct a test plan which is concentrated on geodesics
and is a master plan for the \(1\)-Sobolev space \(W^{1,1}(\X)\).
\end{itemize}
The result stated in i) is the content of Theorem
\ref{thm:countable_master_tp}. Its proof argument is rather
easy and relies on the (a priori) weaker notion of a
\emph{Beppo Levi BV function}, introduced by Di Marino in his
PhD thesis \cite{DiMarinoPhD}. Given a family \(\Pi\) of
\(\infty\)-test plans, we declare that some function \(f\in L^1(\mm)\)
belongs to the space \({\rm BV}_\Pi^*(\X)\) provided there exists
a constant \(C\geq 0\) such that
\begin{equation}\label{eq:intro_BV_star}
\int f(\gamma_1)-f(\gamma_0)\,\d\ppi(\gamma)\leq
{\rm Comp}(\ppi){\rm Lip}(\ppi)C,\quad\text{ for every }\ppi\in\Pi.
\end{equation}
We introduce this auxiliary definition in the paragraph preceding
Theorem \ref{thm:master_G}. When \(\Pi\) is the family of all
\(\infty\)-test plans, we recover the Beppo Levi BV space
\({\rm BV}^\star(\X)\) proposed in \cite{DiMarinoPhD}
(denoted by \(BV_{BL}(\X)\) therein). The fact that
\({\rm BV}(\X)\subseteq{\rm BV}^\star(\X)\) (and more generally
that \({\rm BV}_\Pi(\X)\subseteq{\rm BV}_\Pi^\star(\X)\) for any
plan family \(\Pi\)) can be easily checked, but the converse
inclusion is a highly non-trivial result obtained in
\cite{Ambrosio-DiMarino14,DiMarinoPhD}, which we will report
in Theorem \ref{thm:equiv_BV}. The prime advantage of working
with the space \({\rm BV}^\star(\X)\) is that (for \(f\)
sufficiently regular) the left-hand side in the inequality
\eqref{eq:intro_BV_star} is continuous under narrow convergence
of \(\ppi\). Combining this property with the separability of the
space of \(\infty\)-test plans and the identity
\({\rm BV}(\X)={\rm BV}^\star(\X)\), we can eventually obtain
in Theorem \ref{thm:countable_master_tp} a countable family of
master plans for \({\rm BV}(\X)\).
\medskip

The verification of ii) is more involved and based on techniques coming from optimal transportation (see \cite{Villani09}), so let us
spend a few words about the strategy behind its proof.
The \emph{Curvature-Dimension condition} \({\sf CD}(K,N)\),
pioneered by Lott--Sturm--Villani
\cite{Lott-Villani09,Sturm06I,Sturm06II},
provides an intrinsic way to impose a lower bound \({\rm Ric}\geq K\)
on the Ricci curvature and an upper bound \({\rm dim}\leq N\) on the
dimension, when dealing with (possibly very irregular) metric measure
spaces. The Curvature-Dimension condition is expressed via the
convexity properties of the R\'{e}nyi entropy functionals in the
space \(\mathscr P_q(\X)\) of Borel probability measures with finite
\(q^{th}\)-moment, equipped with the \emph{Wasserstein distance}
\(W_q\). The original definition of a \({\sf CD}(K,N)={\sf CD}_2(K,N)\)
space in fact corresponds to the exponent \(q=2\), but later on
(see \cite{Kell17b}) also \({\sf CD}_q(K,N)\) spaces for different
\(q\in(1,\infty)\) have been studied. In the case of
non-branching \({\sf CD}(K,N)\) spaces (whose reference
measure \(\mm\) is finite), we know from the results in \cite{ACCMCS20}
that \({\sf CD}_q(K,N)\) is actually independent of \(q\).  
Another important property we will need
is the existence of \emph{\(q\)-optimal dynamical plans}:
given a \({\sf CD}_q(K,N)\) space \((\X,\sfd,\mm)\) and measures
\(\mu_0,\mu_1\in\mathscr P_q(\X)\) with \(\mu_0,\mu_1\ll\mm\), we can
find a Borel probability measure \(\ppi\) on the set \({\rm Geo}(\X)\)
of all geodesics, such that \([0,1]\ni t\mapsto(\e_t)_\#\ppi\) is a
geodesic in the Wasserstein space \(\big(\mathscr P_q(\X),W_q\big)\)
joining \(\mu_0\) and \(\mu_1\). We now briefly explain how these two
properties -- the fact that \({\sf CD}_q(K,N)\) does not depend on
\(q\) and the existence of \(q\)-optimal dynamical plans -- enter into
play during the proof of Theorem \ref{thm:master_G}, which states that
on non-branching \({\sf CD}(K,N)\) spaces (with \(\mm(\X)<+\infty\))
those \(\infty\)-test plans that are concentrated on \({\rm Geo}(\X)\)
form a master family for \({\rm BV}(\X)\). For the sake of exposition,
we first consider the case \(K\geq 0\), which is technically
simpler to handle:
\begin{itemize}
\item The first step is to show that, given any boundedly-supported
\(\infty\)-test plan \(\eeta\), we can find a \(\infty\)-optimal
dynamical plan \(\ppi\) with \((\e_0)_\#\ppi=(\e_0)_\#\eeta\),
\((\e_1)_\#\ppi=(\e_1)_\#\eeta\), and
\begin{equation}\label{eq:intro_BIP}
{\rm Comp}(\ppi)\leq{\rm Comp}(\eeta).
\end{equation}
We will prove this fundamental property in Theorem
\ref{thm:CD_are_BIP}. First, by suitably adapting known results
we cook up, for any \(q\in(1,\infty)\), a \(q\)-optimal dynamical
plan \(\ppi_q\) \emph{interpolating} \(\eeta\) (namely,
\((\e_i)_\#\ppi_q=(\e_i)_\#\eeta\) for \(i=0,1\)) and verifying
\eqref{eq:intro_BIP}. Then, by using a standard compactness argument
(which builds upon Proposition \ref{prop:compactness_tp}) we obtain
the desired \(\infty\)-optimal dynamical plan \(\ppi\) as a narrow
sublimit of \(\ppi_q\) as \(q\to\infty\).
\item As pointed out in Remark \ref{rmk:poly_interp_ineq}, since
\(\ppi\) interpolates \(\eeta\), one also has \({\rm Lip}(\ppi)
\leq{\rm Lip}(\eeta)\).
\item Called \(\Pi_{\rm Geo}\) the set of \(\infty\)-test
plans concentrated on \({\rm Geo}(\X)\) and given
\(f\in{\rm BV}_{\Pi_{\rm Geo}}^\star(\X)\), we aim to prove that
\(f\in{\rm BV}(\X)\) and \(|{\boldsymbol D}f|(\X)\leq
|{\boldsymbol D}f|_{\Pi_{\rm Geo}}^\star(\X)\).
This would imply that \(\Pi_{\rm Geo}\) is a master family for
\({\rm BV}(\X)\), as the other implications can be easily
established. We can argue as follows: to any boundedly-supported
\(\infty\)-test plan \(\eeta\) we associate its interpolating
plan \(\eeta\in\Pi_{\rm Geo}\) as above, thus we may estimate
\[\begin{split}
\int f(\gamma_1)-f(\gamma_0)\,\d\eeta(\gamma)
&=\int f(\gamma_1)-f(\gamma_0)\,\d\ppi(\gamma)
\leq{\rm Comp}(\ppi){\rm Lip}(\ppi)
|{\boldsymbol D}f|_{\Pi_{\rm Geo}}^\star(\X)\\
&\leq{\rm Comp}(\eeta){\rm Lip}(\eeta)
|{\boldsymbol D}f|_{\Pi_{\rm Geo}}^\star(\X).
\end{split}\]
This yields \(f\in{\rm BV}^\star(\X)={\rm BV}(\X)\) and
\(|{\boldsymbol D}f|(\X)=|{\boldsymbol D}f|^\star(\X)\leq
|{\boldsymbol D}f|_{\Pi_{\rm Geo}}^\star(\X)\), as desired.
We refer to Theorem \ref{thm:master_G} (and to Remark
\ref{rmk:simpler_K_positive}) for the details.
\end{itemize}
For general \(K\in\R\), the interpolating \(\ppi\in\Pi_{\rm Geo}\)
can be chosen to satisfy just the weaker bound
\begin{equation}\label{eq:intro_BIP_all_K}
{\rm Comp}(\ppi)\leq C\big({\rm Lip}(\eeta)\big){\rm Comp}(\eeta),
\end{equation}
for some function \(C\colon\R^+\to\R^+\), \({\rm L}\mapsto C({\rm L})\)
which converges to \(1\) as \({\rm L}\to 0\) and depends only on
\(K\), \(N\). Therefore, to achieve the analogue of the last passage
above, it is convenient to perform a polygonal interpolation argument:
given any \(\eeta\) and \(n\in\N\), we subdivide \([0,1]\)
into \(n\)-many subintervals of equal length \(1/n\), and for every
\(i=1,\ldots,n\) we denote by \(\eeta_i\) the `restriction' of
\(\eeta\) to the \(i^{th}\) such interval, reparametrised again
on \([0,1]\). Then the \(\infty\)-test plan
\(\ppi_i\in\Pi_{\rm Geo}\) interpolating \(\eeta_i\) satisfies
\({\rm Comp}(\ppi_i)\leq C\big({\rm Lip}(\eeta)/n\big)
{\rm Comp}(\eeta_i)\) and \({\rm Lip}(\ppi_i)\leq{\rm Lip}(\eeta_i)\).
Finally, thanks to a glueing argument we can `patch together' the
\(\ppi_i\)'s, thus obtaining a plan \(\ppi\in\Pi_{\rm Geo}\) which
interpolates \(\eeta\), and satisfies \({\rm Comp}(\ppi)\leq
C\big({\rm Lip}(\eeta)/n\big){\rm Comp}(\eeta)\) and
\({\rm Lip}(\ppi)\leq{\rm Lip}(\eeta)\). Once this is done,
one can proceed as before by using the fact that
\(\lim_n C\big({\rm Lip}(\eeta)/n\big)=1\). 

As it is evident, in the general case $K\in\R$ the polygonal interpolation is a key step to achieve the same conclusion as for $K\geq 0$. We point out that the idea of approximation via a polygonal argument is due to \cite{Lisini07}. However, differently from there, our framework benefits from the synthetic Ricci lower bounds and the \emph{tightness} step, \emph{i.e.}, sending $n$ to infinity, is not a delicate issue in this work thanks to \eqref{eq:intro_BIP_all_K}.

It is worth remarking that the assumption \(\mm(\X)<+\infty\)
in the paper \cite{ACCMCS20} seems to be only of technical nature,
and that it might be possibly removed. As soon as this is done,
the finite assumption on \(\mm\) can be automatically dropped
from all statements in the present paper.
\medskip

Concerning iii): the defining inequality \eqref{eq:intro_BV}
of \({\rm BV}(\X)\) is in integral form, and this may (and
actually does) cause difficulties when one wants to manipulate
\(\infty\)-test plans, for instance taking restrictions. In
other words, if \(\ppi\) is a \(\infty\)-test plan and \(\Gamma\)
is a Borel set of curves with \(0<\ppi(\Gamma)<1\), then one can
consider the test plan \(\tilde\ppi\coloneqq
\ppi(\Gamma)^{-1}\ppi|_\Gamma\). While \({\rm Lip}(\tilde\ppi)
\leq{\rm Lip}(\ppi)\), the best possible bound for the compression
constant is only \({\rm Comp}(\tilde\ppi)\leq\ppi(\Gamma)^{-1}
{\rm Comp}(\ppi)\). This means that the inequality in
\eqref{eq:intro_BV} does not behave well under the restriction
operation.

Therefore, it would be natural to provide a `curvewise' notion
of BV, which allows for more flexibility at the level of the test
plans used in its definition. This goal will be achieved in
Section \ref{s:curvewise_BV}, where we introduce (in Definition
\ref{def:BV_cw}) the space \({\rm BV}^{\sf cw}(\X)\). This concept
is strongly inspired by (and equivalent to) the so-called
\emph{AM-BV space} \({\rm BV}_{AM}(\X)\), quite recently introduced
by O.\ Martio \cite{Martio16,Martio16-2}. Roughly speaking, the main
difference is that in \({\rm BV}_{AM}(\X)\) the defining property
is required to hold along \(AM\)-almost every curve (where
\(AM\) stands for the \emph{\(AM\)-modulus} \cite{Martio16}),
while in \({\rm BV}^{\sf cw}(\X)\) the exceptional curves are
measured by using \(\infty\)-test plans. We will prove in Theorem
\ref{thm:equiv_BV_cw} (see also Corollary \ref{cor:equiv_BV_cw})
that \({\rm BV}^{\sf cw}(\X)\) and \({\rm BV}(\X)\) coincide,
while in Theorem \ref{thm:master_cw_plan} we show (using
Theorem \ref{thm:countable_master_tp}) that it is possible to find
a single master test plan for the curvewise BV space, concentrated
on geodesics when the underlying metric measure space is
non-branching \({\sf CD}(K,N)\). As a side result of independent
interest, we will prove in Appendix \ref{app:AM-BV}, more specifically
in Theorem \ref{thm:equiv_BV_AM}, that the spaces
\({\rm BV}_{AM}(\X)\) and \({\rm BV}(\X)\) coincide
on every metric measure space.
To the best of our knowledge, this kind of equivalence result
has been previously addressed only in the framework of doubling
spaces supporting a Poincar\'{e} inequality, see \cite{DCEBKS19}.
\medskip

Finally, about iv): while the BV-theory on metric
measure spaces is well-established by now, the \(W^{1,1}\)-theory
seems to be much more complex to deal with.
In \cite{Ambrosio-DiMarino14,DiMarinoPhD} several definitions of
\(W^{1,1}(\X)\) are proposed, but it is shown that some of them
are not equivalent. In particular, in the ${\sf CD}$-setting, the available identifications \cite{Ambrosio-Pinamonti-Speight15} are mainly based on Doubling and Poincar\'e conditions rather than on the curvature hypothesis, two properties that are known to hold in the $\sf CD$-setting after \cite{Sturm06I,Sturm06II} and \cite{Rajala12}. However, as proven in \cite{GigliHan14},
the situation significantly improves when considering
\({\sf RCD}(K,N)\) spaces, since in this case the various
notions of \(W^{1,1}(\X)\) are in fact all equivalent and they relate to the space \({\rm BV}(\X)\). The ${\sf RCD}(K,N)\coloneqq {\sf RCD}_2(K,N)$ condition was introduced as a `Riemannian' counterpart of the ${\sf CD}$-theory to single out Finsler geometries (here, ${\sf R}$ stands for Riemannian). It was proposed first in the infinite dimensional setting \cite{AmbrosioGigliSavare11-2} (see also \cite{AGMR15} for the case of $\sigma$-finite reference measure) and later in the finite dimensional setting in \cite{Gigli12}. The ${\sf RCD}(K,N)$ condition is formulated adding to the ${\sf CD}$-condition the so-called infinitesimal Hilbertianity property, namely that the Sobolev space $W^{1,2}(\X)$ is Hilbert, and essentially reflects the fact that the underlying space, at small scales, looks `Hilbertian'. For a complete account on this theory, we refer also to \cite{AmbrosioGigliSavare12,AmbrosioMondinoSavare13-2,EKS15} for equivalent formulations and to \cite{CM16} for the equivalence with \emph{reduced} Riemannian curvature dimension conditions.

In this note, it will be important to recall that ${\sf RCD}(K,N)$ spaces for some $N<\infty$ are non-branching, as proven in \cite{Deng20}. Therefore, by appealing to \cite{GigliHan14}, we can prove in Appendix \ref{app:master_W11} the existence of a master plan for
\(W^{1,1}(\X)\) concentrated on geodesics (for \({\sf RCD}(K,N)\)
spaces whose \(\mm\) is finite).
\medskip

We conclude this Introduction by mentioning that the concept of
a \emph{master test plan} was first proposed and investigated by the
second named author in \cite{Pas20}, where \(p\)-Sobolev
spaces \(W^{1,p}(\X)\) with \(p\in(1,\infty)\) were studied.
The corresponding results to Theorems \ref{thm:master_G} and
\ref{thm:countable_master_tp} for \(p\)-Sobolev spaces will be
obtained by the first named author and N.\ Gigli in
\cite{GigliNobili21}.

Finally, we point out that a geometric consequence of the results of the present paper
will be obtained in the forthcoming work \cite{BPS21}, 
where it will be shown that on \({\sf RCD}(K,N)\) spaces each perimeter measure (or, more generally, the
total variation measure of any BV function) is concentrated on
the \(n\)-regular set, where \(n\leq N\) stands for the essential
dimension of \(\X\). This pushes the `constancy-of-dimension'
result proved in \cite{BS19} up to codimension one.
\medskip

\textbf{Acknowledgements.} The authors would like to thank
Nicola Gigli for having suggested Remark \ref{rmk:MCP}.
The second named author was supported by the Balzan project
led by Luigi Ambrosio. The third named author was supported
by the Deutsche Forschungsgemeinschaft (DFG, German Research
Foundation) - through SPP 2026 Geometry at Infinity.
\section{Preliminaries}
\subsection{Metric spaces and measure theory}
We use the shorthand notation \(\mathcal L_1\) to denote
the restriction of the one-dimensional Lebesgue measure \(\mathcal L^1\)
to the unit interval \([0,1]\), namely,
\[
\mathcal L_1\coloneqq\mathcal L^1|_{[0,1]}.
\]
Let \((\X,\sfd)\) be a complete, separable metric space.
The space \(C([0,1],\X)\) of all continuous curves in \(\X\) is a
complete, separable metric space if endowed with the supremum distance
\[
\sfd_{\rm sup}(\gamma,\sigma)\coloneqq\max_{t\in[0,1]}\sfd(\gamma_t,\sigma_t),
\quad\text{ for every }\gamma,\sigma\in C([0,1],\X).
\]
Given any \(t\in[0,1]\), we define the evaluation map
\(\e_t\colon C([0,1],\X)\to\X\) at time \(t\) as
\(\e_t(\gamma)\coloneqq\gamma_t\) for every \(\gamma\in C([0,1],\X)\).
Moreover, given any \(s,t\in[0,1]\) with \(s\neq t\), we define the
restriction mapping \({\rm restr}_s^t\colon C([0,1],\X)\to C([0,1],\X)\)
as \({\rm restr}_s^t(\gamma)_r\coloneqq\gamma_{(1-r)s+rt}\) for every
\(\gamma\in C([0,1],\X)\) and \(r\in[0,1]\). Observe that the maps
\(\e_t\) and \({\rm restr}_s^t\) are \(1\)-Lipschitz.
\medskip

Given any exponent \(q\in[1,\infty]\), we denote by
\(AC^q([0,1],\X)\) the family of all \(q\)-absolutely continuous
curves in \(\X\), namely, we declare that a given curve
\(\gamma\in C([0,1],\X)\) belongs to the space \(AC^q([0,1],\X)\)
provided there exists a function \(g\in L^q(0,1)\) such that
\begin{equation}\label{eq:def_AC_curve}
\sfd(\gamma_t,\gamma_s)\leq\int_s^t g(r)\,\d r,
\quad\text{ for every }s,t\in[0,1]\text{ with }s\leq t.
\end{equation}
It holds that \(AC^q([0,1],\X)\) is a Borel subset of
\(\big(C([0,1],\X),\sfd_{\rm sup}\big)\). Let us also denote
\[
AC([0,1],\X)\coloneqq AC^1([0,1],\X),\quad
{\rm LIP}([0,1],\X)\coloneqq AC^\infty([0,1],\X).
\]
Observe that \({\rm LIP}([0,1],\X)\subseteq AC^q([0,1],\X)
\subseteq AC([0,1],\X)\) holds for every \(q\in(1,\infty)\).
Given any \(\gamma\in AC^q([0,1],\X)\), we have that
\(|\dot\gamma_t|\coloneqq\lim_{h\to 0}\sfd(\gamma_{t+h},\gamma_t)/|h|\)
exists for \(\mathcal L_1\)-a.e.\ \(t\in[0,1]\).
The resulting \(\mathcal L_1\)-a.e.\ defined function \(|\dot\gamma|\)
belongs to \(L^q(0,1)\) and is the minimal \(g\in L^q(0,1)\)
satisfying \eqref{eq:def_AC_curve}. We refer to
\cite[Theorem 1.1.2]{AmbrosioGigliSavare08} for a proof of these claims.
We define the metric speed functional
\({\sf ms}\colon C([0,1],\X)\times[0,1]\to[0,+\infty]\) as follows:
\[
{\sf ms}(\gamma,t)\coloneqq|\dot\gamma_t|=
\lim_{h\to 0}\frac{\sfd(\gamma_{t+h},\gamma_t)}{|h|},
\quad\text{ whenever }\gamma\in AC([0,1],\X)\text{ and }
\exists\lim_{h\to 0}\frac{\sfd(\gamma_{t+h},\gamma_t)}{|h|},
\]
and \({\sf ms}(\gamma,t)\coloneqq+\infty\) otherwise.
It holds that \(\sf ms\) is a Borel function.
\medskip

We denote by \(\mathscr P(\X)\) the family of all Borel probability
measures on \((\X,\sfd)\). We will always endow \(\mathscr P(\X)\)
with the narrow topology: given any sequence
\((\mu_n)_{n\in\N\cup\{\infty\}}\subseteq\mathscr P(\X)\),
we say that \(\mu_n\) narrowly converges to \(\mu_\infty\)
as \(n\to\infty\) (or, briefly, \(\mu_n\rightharpoonup\mu_\infty\)
as \(n\to\infty\)) provided
\[
\int\varphi\,\d\mu_\infty=\lim_{n\to\infty}\int\varphi\,\d\mu_n,
\quad\text{ for every }\varphi\in C_b(\X),
\]
where \(C_b(\X)\) stands for the space of bounded,
continuous real-valued functions \(\varphi\colon\X\to\R\).
There exists a complete and separable distance \(\sfd_{\mathscr P}\)
which metrises the narrow topology.
A family $ {\mathcal K} \subseteq {\mathscr P}(\X)$ of probabilities is called tight if, for every $\varepsilon>0$, there exists a compact set $K_\varepsilon \subseteq \X$ so that $\mu(K_\varepsilon) \ge 1-\varepsilon$ for every $\mu \in {\mathcal K}$. For later use, we recall that tightness can be characterised thanks to Prokhorov's Theorem as follows: $ {\mathcal K} \subseteq {\mathscr P}(\X)$ is tight if and only if there exists a functional $\psi \colon \X \to [0,+\infty]$ with compact sublevels so that $\sup_{\mu \in {\mathcal K}}\int \psi \, \d \mu <\infty$.
For any \(\mu\in\mathscr P(\X)\), we have that
\(\|f\|_{L^q(\mu)}\leq\|f\|_{L^{q'}(\mu)}\) for every
\(q,q'\in[1,\infty]\) with \(q\leq q'\) and \(f\colon\X\to[0,+\infty)\)
Borel, as a consequence of H\"{o}lder's inequality. Moreover, we have that
\begin{equation}\label{eq:continuity_Lq_norm}
\|f\|_{L^\infty(\mu)}=\lim_{q\to\infty}\|f\|_{L^q(\mu)},
\quad\text{ for every }\mu\in\mathscr P(\X)\text{ and }
f\colon\X\to[0,+\infty)\text{ Borel.}
\end{equation}
Indeed, assuming \(\lim_{q\to\infty}\|f\|_{L^q(\mu)}<+\infty\),
for any \(\lambda\in(0,+\infty)\) with
\(\lambda<\|f\|_{L^\infty(\mu)}\) we have that the Borel set
\(S_\lambda\coloneqq\big\{x\in\X\,:\,f(x)\geq\lambda\big\}\)
has finite \(\mu\)-measure, thus by letting \(q\to\infty\) in
\[
\|f\|_{L^q(\mu)}\geq\bigg(\int_{S_\lambda}f^q\,\d\mu\bigg)^{1/q}\geq
\big(\lambda^q\mu(S_\lambda)\big)^{1/q}=\lambda\mu(S_\lambda)^{1/q}
\]
we deduce that \(\lim_{q\to\infty}\|f\|_{L^q(\mu)}\geq\lambda\).
Finally, by letting \(\lambda\nearrow\|f\|_{L^\infty(\mu)}\),
we obtain \eqref{eq:continuity_Lq_norm}.
\subsection{Test plans}
By a metric measure space we mean a triple \((\X,\sfd,\mm)\), where
\[\begin{split}
(\X,\sfd)&\quad\text{ is a complete, separable metric space},\\
\mm\geq 0&\quad\text{ is a boundedly-finite Borel measure on }(\X,\sfd).
\end{split}\]
Let us recall the key notion of test plan, which was introduced
in \cite{AmbrosioGigliSavare11,AmbrosioGigliSavare11-3}.
\begin{definition}[Test plan]\label{def:test_plan}
Let \((\X,\sfd,\mm)\) be a metric measure space. Let \(q\in[1,\infty]\)
be given. Then a measure \(\ppi\in\mathscr P\big(C([0,1],\X)\big)\) is
said to be a \(q\)-test plan on \((\X,\sfd,\mm)\) provided
\begin{subequations}\begin{align}\label{eq:def_test_plan1}
&\exists\,{\rm C}>0:\quad\forall t\in[0,1],\quad(\e_t)_\#\ppi\leq{\rm C}\mm,\\
\label{eq:def_test_plan2}
&\|{\sf ms}\|_{L^q(\sppi\otimes\mathcal L_1)}<+\infty.
\end{align}\end{subequations}
The compression constant \({\rm Comp}(\ppi)\) of \(\ppi\) is defined
as the minimal \({\rm C}>0\) satisfying \eqref{eq:def_test_plan1}. The
kinetic \(q\)-energy \({\rm Ke}_q(\ppi)\) of \(\ppi\) is defined as
\[
{\rm Ke}_q(\ppi)\coloneqq\left\{\begin{array}{ll}
\|{\sf ms}\|_{L^q(\sppi\otimes\mathcal L_1)}^q,\\
\|{\sf ms}\|_{L^\infty(\sppi\otimes\mathcal L_1)},
\end{array}\quad\begin{array}{ll}
\text{ if }q<\infty,\\
\text{ if }q=\infty.
\end{array}\right.
\]
We denote by \(\Pi_q(\X)\) the family of all \(q\)-test plans
on \((\X,\sfd,\mm)\).
\end{definition}
Observe that a \(q\)-test plan must be concentrated on \(AC^q([0,1],\X)\).
Hereafter, we shall mostly focus on \(\infty\)-test plans. In this regard,
we introduce the alternative notation
\[
{\rm Lip}(\ppi)\coloneqq{\rm Ke}_\infty(\ppi),\quad
\text{ for every }\infty\text{-test plan }\ppi\text{ on }(\X,\sfd,\mm).
\]
This choice of terminology is motivated by the following observation.
\begin{remark}{\rm
Given any \(\infty\)-test plan \(\ppi\) on \((\X,\sfd,\mm)\), the
quantity \({\rm Lip}(\ppi)\) can be equivalently characterised as
the minimal \({\rm L}\geq 0\) such that \(\ppi\) is concentrated on
\({\rm L}\)-Lipschitz curves.

Indeed, by Fubini's theorem we know that \(\ppi\)-a.e.\ \(\gamma\)
is Lipschitz and satisfies \(|\dot\gamma_t|\leq{\rm Lip}(\ppi)\) for
\(\mathcal L_1\)-a.e.\ \(t\in[0,1]\), thus accordingly
\(\sfd(\gamma_t,\gamma_s)\leq\int_s^t|\dot\gamma_r|\,\d r\leq
{\rm Lip}(\ppi)|t-s|\) for every \(s,t\in[0,1]\) with \(s\leq t\),
which shows that \(\gamma\) is \({\rm Lip}(\ppi)\)-Lipschitz.
On the other hand, if \(\ppi\) is concentrated on \({\rm L}\)-Lipschitz
curves for some \({\rm L}\geq 0\), then for \(\ppi\)-a.e.\ curve
\(\gamma\) we have that
\[
|\dot\gamma_t|=\lim_{h\to 0}\frac{\sfd(\gamma_{t+h},\gamma_t)}{|h|}
\leq\lim_{h\to\infty}\frac{{\rm L}|(t+h)-t|}{|h|}={\rm L},\quad
\text{ for }\mathcal L_1\text{-a.e.\ }t\in[0,1],
\]
whence it follows that \({\rm Lip}(\ppi)=
\|{\sf ms}\|_{L^\infty(\sppi\otimes\mathcal L_1)}\leq{\rm L}\).
Therefore, the claim is achieved.
\fr}\end{remark}
\begin{remark}\label{rmk:cont_eval_meas}{\rm
We point out that, given any \(t\in[0,1]\), it holds that
\[
\mathscr P\big(C([0,1],\X)\big)\ni\ppi\mapsto(\e_t)_\#\ppi
\in\mathscr P(\X),\quad\text{ is continuous,}
\]
where both the domain and the target are endowed with the narrow
topology. Indeed, the continuity of \(\e_t\) ensures that
\(\varphi\circ\e_t\in C_b\big(C([0,1],\X)\big)\) whenever
\(\varphi\in C_b(\X)\), so that if we assume
\(\ppi_n\rightharpoonup\ppi\), then we have that
\(\int\varphi\,\d(\e_t)_\#\ppi_n\to\int\varphi\,\d(\e_t)_\#\ppi\)
for every \(\varphi\in C_b(\X)\).

In particular, if we further assume that \(\ppi\) and
\((\ppi_n)_{n\in\N}\) are \(q\)-test plans for some \(q\in[1,\infty]\)
and \({\rm C}\coloneqq\limi_{n\to\infty}{\rm Comp}(\ppi_n)<+\infty\),
then it holds that \({\rm Comp}(\ppi)\leq{\rm C}\). This can be proved
by observing that, chosen a subsequence \((n_i)_{i\in\N}\) which
satisfies \({\rm C}=\lim_{i\to\infty}{\rm Comp}(\ppi_{n_i})\),
we have \(\int\varphi\,\d(\e_t)_\#\ppi=
\lim_i\int\varphi\,\d(\e_t)_\#\ppi_{n_i}\leq{\rm C}\int\varphi\,\d\mm\)
for every \(t\in[0,1]\) and \(\varphi\in C_b(\X)^+\).
\fr}\end{remark}
The kinetic \(q\)-energy can be extended to a functional
\({\rm Ke}_q\colon\mathscr P\big(C([0,1],\X)\big)\to[0,+\infty]\)
by declaring that \({\rm Ke}_q(\ppi)\coloneqq+\infty\) whenever \(\ppi\)
is not a \(q\)-test plan. It is well-known that \({\rm Ke}_q\) is
lower semicontinuous for each \(q\in(1,\infty)\) when its domain
is endowed with the narrow topology. As a consequence,
we obtain the following Mosco-type convergence result.
\begin{proposition}\label{prop:Mosco_conv_tp}
Let \((\X,\sfd,\mm)\) be a metric measure space. Let
\((q_n)_{n\in\N}\subseteq(1,\infty)\) be any sequence satisfying
\(q_n\nearrow\infty\) as \(n\to\infty\). Then the following properties
are verified:
\begin{itemize}
\item[\(\rm i)\)] If \((\ppi_n)_{n\in\N\cup\{\infty\}}\subseteq
\mathscr P\big(C([0,1],\X)\big)\) and \(\ppi_n\rightharpoonup\ppi_\infty\)
as \(n\to\infty\), then
\begin{equation}\label{eq:lsc_Ke}
{\rm Lip}(\ppi_\infty)\leq\limi_{n\to\infty}{\rm Ke}_{q_n}^{1/q_n}(\ppi_n).
\end{equation}
In particular, it holds that \({\rm Lip}\colon
\mathscr P\big(C([0,1],\X)\big)\to[0,+\infty]\) is lower semicontinuous.
\item[\(\rm ii)\)] Given any \(\ppi\in\mathscr P\big(C([0,1],\X)\big)\),
it holds that
\[
{\rm Lip}(\ppi)\geq\lims_{n\to\infty}{\rm Ke}_{q_n}^{1/q_n}(\ppi).
\]
In particular, it holds that
\({\rm Ke}_{q_n}^{1/q_n}(\ppi)\to{\rm Lip}(\ppi)\) as \(n\to\infty\).
\end{itemize}
\end{proposition}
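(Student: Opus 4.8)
The plan is to reduce both items to the already-recalled lower semicontinuity of \({\rm Ke}_q\) for finite exponents \(q\in(1,\infty)\), together with two elementary facts about the probability space \(\big(C([0,1],\X)\times[0,1],\,\sppi\otimes\mathcal L_1\big)\) (recall that \(\mathcal L_1\) has unit mass): the map \(p\mapsto\|{\sf ms}\|_{L^p(\sppi\otimes\mathcal L_1)}\) is non-decreasing by H\"older, and it converges to \(\|{\sf ms}\|_{L^\infty(\sppi\otimes\mathcal L_1)}\) as \(p\to\infty\) by \eqref{eq:continuity_Lq_norm}. I will also use that a \(q\)-test plan is automatically a \(q'\)-test plan whenever \(q'\le q\) (the compression bound being \(q\)-independent), that \({\rm Ke}_q^{1/q}(\ppi)=\|{\sf ms}\|_{L^q(\sppi\otimes\mathcal L_1)}\) for a \(q\)-test plan \(\ppi\) while \({\rm Ke}_q(\ppi)=+\infty\) otherwise, and that \(\Lip(\ppi)=\|{\sf ms}\|_{L^\infty(\sppi\otimes\mathcal L_1)}\) when \(\ppi\) is an \(\infty\)-test plan and \(+\infty\) otherwise. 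These yield, for every \(\ppi\) and \(q'\le q\), the inequalities \({\rm Ke}_{q'}^{1/q'}(\ppi)\le{\rm Ke}_q^{1/q}(\ppi)\) (trivial if the right-hand side is \(+\infty\)) and \({\rm Ke}_q^{1/q}(\ppi)\le\Lip(\ppi)\).

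I would establish ii) first, fixing \(\ppi\). If \(\ppi\) is an \(\infty\)-test plan, then it is a \(q_n\)-test plan for every \(n\), and \({\rm Ke}_{q_n}^{1/q_n}(\ppi)=\|{\sf ms}\|_{L^{q_n}(\sppi\otimes\mathcal L_1)}\) increases, by \eqref{eq:continuity_Lq_norm}, to \(\|{\sf ms}\|_{L^\infty(\sppi\otimes\mathcal L_1)}=\Lip(\ppi)\); this gives both the displayed inequality and the convergence \({\rm Ke}_{q_n}^{1/q_n}(\ppi)\to\Lip(\ppi)\). If \(\ppi\) is not an \(\infty\)-test plan, then \(\Lip(\ppi)=+\infty\), the displayed inequality is vacuous, and \({\rm Ke}_{q_n}^{1/q_n}(\ppi)\to+\infty\) in each remaining case: either \(\ppi\) fails to be a \(q_n\)-test plan for all large \(n\) (so \({\rm Ke}_{q_n}^{1/q_n}(\ppi)=+\infty\) eventually), or \(\ppi\) is a \(q_n\)-test plan for every \(n\) but \(\|{\sf ms}\|_{L^\infty(\sppi\otimes\mathcal L_1)}=+\infty\), whence \(\|{\sf ms}\|_{L^{q_n}(\sppi\otimes\mathcal L_1)}\to+\infty\) again by \eqref{eq:continuity_Lq_norm}.

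For i), put \(\ell\coloneqq\limi_{n\to\infty}{\rm Ke}_{q_n}^{1/q_n}(\ppi_n)\), and assume \(\ell<+\infty\) (otherwise \eqref{eq:lsc_Ke} is trivial). Fix \(m\in\N\). For every \(n\) with \(q_n\ge q_m\) one has \({\rm Ke}_{q_m}^{1/q_m}(\ppi_n)\le{\rm Ke}_{q_n}^{1/q_n}(\ppi_n)\), so taking \(\liminf\) in \(n\) (only the tail matters) gives \(\limi_{n\to\infty}{\rm Ke}_{q_m}^{1/q_m}(\ppi_n)\le\ell\). Applying the lower semicontinuity of \({\rm Ke}_{q_m}\) along \(\ppi_n\rightharpoonup\ppi_\infty\), followed by the continuous increasing map \(t\mapsto t^{1/q_m}\) on \([0,+\infty]\), yields \({\rm Ke}_{q_m}^{1/q_m}(\ppi_\infty)\le\limi_{n\to\infty}{\rm Ke}_{q_m}^{1/q_m}(\ppi_n)\le\ell\). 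Since \(m\) is arbitrary, letting \(m\to\infty\) and invoking part ii) applied to \(\ppi_\infty\) gives \(\Lip(\ppi_\infty)=\lim_{m\to\infty}{\rm Ke}_{q_m}^{1/q_m}(\ppi_\infty)\le\ell\), which is \eqref{eq:lsc_Ke}. Lower semicontinuity of \(\Lip\) then follows by combining \eqref{eq:lsc_Ke} with the pointwise bound \({\rm Ke}_{q_n}^{1/q_n}(\ppi_n)\le\Lip(\ppi_n)\): for any \(\ppi_n\rightharpoonup\ppi_\infty\) this gives \(\Lip(\ppi_\infty)\le\limi_{n\to\infty}{\rm Ke}_{q_n}^{1/q_n}(\ppi_n)\le\limi_{n\to\infty}\Lip(\ppi_n)\).

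I expect the only delicate point to be bookkeeping rather than analysis. One must make sure the comparison \({\rm Ke}_{q_m}^{1/q_m}(\ppi_n)\le{\rm Ke}_{q_n}^{1/q_n}(\ppi_n)\) is applied correctly — it holds for \emph{all} \(n\) with \(q_n\ge q_m\), since its right-hand side equals \(+\infty\) precisely when \(\ppi_n\) is not a \(q_n\)-test plan — and that, in the \(m\to\infty\) passage, it is the \(q\)-independence of the compression constant that allows the finiteness of all \({\rm Ke}_{q_m}(\ppi_\infty)\) to upgrade \(\ppi_\infty\) to a genuine \(\infty\)-test plan via \eqref{eq:continuity_Lq_norm}. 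No deeper obstacle should arise, as the lower semicontinuity of \({\rm Ke}_q\) for finite \(q\) is taken as known.
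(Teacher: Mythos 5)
Your proposal is correct and follows essentially the same route as the paper: part ii) is exactly the application of \eqref{eq:continuity_Lq_norm} to \({\sf ms}\) on \((\sppi\otimes\mathcal L_1)\), and part i) is the same combination of the monotonicity \({\rm Ke}_{q_k}^{1/q_k}(\ppi_n)\leq{\rm Ke}_{q_n}^{1/q_n}(\ppi_n)\) for \(n\geq k\), the lower semicontinuity of each \({\rm Ke}_{q_k}\), and the limit from ii) applied to \(\ppi_\infty\). Your extra bookkeeping (the case distinction on whether \(\ppi\) is a test plan, and the deduction of lower semicontinuity of \(\Lip\) from \({\rm Ke}_{q_n}^{1/q_n}(\ppi_n)\leq\Lip(\ppi_n)\)) only makes explicit what the paper leaves implicit.
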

\begin{proof}
By applying \eqref{eq:continuity_Lq_norm} with
\(\mu\coloneqq\ppi\) and \(f\coloneqq{\sf ms}\), we get
\({\rm Lip}(\ppi)=\lim_{n\to\infty}{\rm Ke}_{q_n}^{1/q_n}(\ppi)\)
for every \(\ppi\in\mathscr P\big(C([0,1],\X)\big)\), thus proving ii).
To prove i), fix \((\ppi_n)_{n\in\N\cup\{\infty\}}\subseteq
\mathscr P\big(C([0,1],\X)\big)\) such that
\(\ppi_n\rightharpoonup\ppi_\infty\) as \(n\to\infty\).
Given any \(k,n\in\N\) with \(n\geq k\), we have that \(q_n\geq q_k\) and
thus \({\rm Ke}_{q_k}^{1/q_k}(\ppi_n)\leq{\rm Ke}_{q_n}^{1/q_n}(\ppi_n)\).
Therefore, the lower semicontinuity of each \({\rm Ke}_{q_k}^{1/q_k}\) gives
\[
{\rm Lip}(\ppi_\infty)=
\lim_{k\to\infty}{\rm Ke}_{q_k}^{1/q_k}(\ppi_\infty)\leq
\lim_{k\to\infty}\limi_{n\to\infty}{\rm Ke}_{q_k}^{1/q_k}(\ppi_n)
\leq\limi_{n\to\infty}{\rm Ke}_{q_n}^{1/q_n}(\ppi_n),
\]
which proves i).
\end{proof}

In the next section, we need the compactness result for
\(\infty\)-test plans we are going to present. Before doing so,
we recall a variant of the Arzel\`{a}--Ascoli theorem
(see \cite[Proposition 2.1]{PaolStep12}).
\begin{theorem}[Arzel\`{a}--Ascoli theorem revisited]\label{thm:AA}
Let \((\X,\sfd)\) be a metric space. Let \(\mathcal K\) be a
closed subset of \(\big(C([0,1],\X),\sfd_{\rm sup}\big)\) which
satisfies the following properties:
\begin{itemize}
\item[\(\rm i)\)] \(\mathcal K\subseteq{\rm LIP}([0,1],\X)\) and
\(\sup\big\{{\rm Lip}(\gamma)\,:\,\gamma\in\mathcal K\big\}<+\infty\).
\item[\(\rm ii)\)] Given any \(\varepsilon>0\), there exists a compact
set \(K_\varepsilon\subseteq\X\) such that
\[
\mathcal L^1\big(\big\{t\in[0,1]\,:\,\gamma_t\notin K_\varepsilon
\big\}\big)\leq\varepsilon,\quad\text{ for every }\gamma\in\mathcal K.
\]
\end{itemize}
Then the set \(\mathcal K\) is compact with respect
to the distance \(\sfd_{\rm sup}\).
\end{theorem}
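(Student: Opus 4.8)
The plan is to verify the two hypotheses of the classical Arzel\`{a}--Ascoli theorem for $\X$-valued maps --- equicontinuity and pointwise relative compactness of $\mathcal K$ --- and then to upgrade relative compactness to compactness using that $\mathcal K$ is closed (recall that $\X$, hence $C([0,1],\X)$, is complete). Equivalently, since $C([0,1],\X)$ is metrisable it suffices to extract from an arbitrary sequence $(\gamma_n)_n\subseteq\mathcal K$ a $\sfd_{\rm sup}$-convergent subsequence, whose limit then automatically belongs to $\mathcal K$. Set $L:=\sup\{\Lip(\gamma):\gamma\in\mathcal K\}<+\infty$, so that the curves of $\mathcal K$ are uniformly $L$-Lipschitz; this is the only role played by hypothesis (i), and it immediately yields the equicontinuity of $\mathcal K$.

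The heart of the matter --- and the only place where hypothesis (ii) is used --- is the claim that, for each fixed $t\in[0,1]$, the set $A_t:=\{\gamma_t:\gamma\in\mathcal K\}\subseteq\X$ is totally bounded, hence has compact closure in $\X$. To see this, fix $\varepsilon\in(0,1]$ and let $K_{\varepsilon/2}\subseteq\X$ be the compact set provided by (ii) for the parameter $\varepsilon/2$. For any $\gamma\in\mathcal K$ the `good' time set $\{s\in[0,1]:\gamma_s\in K_{\varepsilon/2}\}$ has $\mathcal L^1$-measure at least $1-\varepsilon/2$, while the interval $[t-\varepsilon,t+\varepsilon]\cap[0,1]$ has length at least $\varepsilon$; hence the two meet, and we find $s\in[0,1]$ with $|s-t|\leq\varepsilon$ and $\gamma_s\in K_{\varepsilon/2}$. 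The $L$-Lipschitz bound then gives
\[
\sfd\big(\gamma_t,K_{\varepsilon/2}\big)\leq\sfd(\gamma_t,\gamma_s)\leq L\,|t-s|\leq L\,\varepsilon .
\]
Thus $A_t$ lies in the $L\varepsilon$-neighbourhood of the compact set $K_{\varepsilon/2}$; covering $K_{\varepsilon/2}$ by finitely many $\varepsilon$-balls centred in it, we cover $A_t$ by finitely many balls of radius $(L+1)\varepsilon$. Since $\varepsilon\in(0,1]$ was arbitrary, $A_t$ is totally bounded, as claimed.

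Granted this, I would conclude by the usual Arzel\`{a}--Ascoli argument. Fix a countable dense set $D\subseteq[0,1]$; by a diagonal extraction over $D$, using that each $\overline{A_t}$ is compact, one obtains a subsequence $(\gamma_{n_k})_k$ such that $\big(\gamma_{n_k}(t)\big)_k$ converges in $\X$ for every $t\in D$. The equi-$L$-Lipschitz property promotes pointwise convergence on $D$ to uniform Cauchyness on $[0,1]$: given $\eta>0$, choose a finite $\tfrac{\eta}{3L}$-net $F\subseteq D$ of $[0,1]$ and combine the standard three-term estimate with the (finitely many) convergences at the points of $F$. By completeness of $\big(C([0,1],\X),\sfd_{\rm sup}\big)$ the sequence $(\gamma_{n_k})_k$ has a uniform limit $\gamma\in C([0,1],\X)$, and $\gamma\in\mathcal K$ because $\mathcal K$ is closed.

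The only genuine difficulty lies in the second paragraph, namely in turning the measure-theoretic content of (ii) --- curves of $\mathcal K$ spend, uniformly in $\gamma$, all but an arbitrarily small fraction of the time inside a fixed compact set --- into the pointwise statement that $\{\gamma_t:\gamma\in\mathcal K\}$ is relatively compact for each single $t$. The equi-Lipschitz hypothesis is precisely what bridges these two, by bounding $\sfd(\gamma_t,\gamma_s)$ in terms of $|t-s|$; once this bridge is in place, everything else is the classical Arzel\`{a}--Ascoli scheme.
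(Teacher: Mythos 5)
Your proof is correct, but note first that the paper does not actually prove this statement: it is recalled verbatim from Paolini--Stepanov \cite[Proposition 2.1]{PaolStep12}, so there is no in-paper argument to compare against. Your route is the natural one and all the steps check out: the uniform Lipschitz bound gives equicontinuity; the combination of hypothesis ii) with the Lipschitz bound converts the measure-theoretic information into total boundedness of each time-slice $A_t=\{\gamma_t:\gamma\in\mathcal K\}$ (the key observation that the good time-set of measure $\geq 1-\varepsilon/2$ must meet the interval $[t-\varepsilon,t+\varepsilon]\cap[0,1]$ of length $\geq\varepsilon$ is exactly right); and the diagonal extraction plus the three-term estimate is the standard Arzel\`{a}--Ascoli conclusion. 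One caveat deserves to be made explicit: you invoke completeness of $\X$ twice (to pass from total boundedness of $A_t$ to compactness of $\overline{A_t}$, and to turn uniform Cauchyness into convergence in $C([0,1],\X)$), whereas the statement only says ``metric space''. This is not a defect of your argument but of the statement: without completeness the result is false. For instance, take $\X=(0,1]$ and $\mathcal K=\{\gamma^n\}_{n\in\N}$ with $\gamma^n_t\coloneqq\max\{t,1/n\}$; these are $1$-Lipschitz, satisfy ii) with $K_\varepsilon=[\varepsilon/2,1]$, and form a closed subset of $C([0,1],\X)$ because their only uniform limit, $t\mapsto t$, leaves $\X$ --- yet $\mathcal K$ is not compact. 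Since the paper's standing convention is that $(\X,\sfd)$ is complete and separable, and the theorem is only applied in that setting, your implicit use of completeness is legitimate, but it would be worth stating it as a hypothesis.
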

\begin{proposition}[Compactness result for \(\infty\)-test plans]
\label{prop:compactness_tp}
Let \((\X,\sfd,\mm)\) be a metric measure space. Let \((\ppi_n)_{n\in\N}\)
be a sequence of \(\infty\)-test plans on \((\X,\sfd,\mm)\)
such that \(\bigcup_{n\in\N}{\rm spt}\big((\e_0)_\#\ppi_n\big)\)
is bounded, \(\sup_{n\in\N}{\rm Lip}(\ppi_n)<+\infty\),
and \(\sup_{n\in\N}{\rm Comp}(\ppi_n)<+\infty\).
Then there exist a subsequence \((\ppi_{n_i})_{i\in\N}\) of
\((\ppi_n)_{n\in\N}\) and a \(\infty\)-test plan \(\ppi\) on
\((\X,\sfd,\mm)\) such that \(\ppi_{n_i}\rightharpoonup\ppi\)
as \(i\to\infty\).
\end{proposition}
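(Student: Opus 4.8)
The strategy is the classical two-step compactness scheme: first extract a narrowly convergent subsequence via tightness (Prokhorov), then verify that the limit measure is again an $\infty$-test plan by invoking the lower semicontinuity results already at our disposal. Write $\mathrm L\coloneqq\sup_n\mathrm{Lip}(\ppi_n)$ and $\mathrm C\coloneqq\sup_n\mathrm{Comp}(\ppi_n)$, both finite by hypothesis, and let $B\subseteq\X$ be a bounded set containing $\mathrm{spt}\big((\e_0)_\#\ppi_n\big)$ for all $n$.

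\emph{Step 1: Tightness of $(\ppi_n)$ in $\mathscr P\big(C([0,1],\X)\big)$.} I would produce a functional on $C([0,1],\X)$ with compact sublevels whose $\ppi_n$-integrals are uniformly bounded, and then apply the Prokhorov-type criterion recalled in the Preliminaries. Since every curve charged by $\ppi_n$ is $\mathrm L$-Lipschitz with starting point in the bounded set $B$, it is entirely contained in the bounded set $B_{\mathrm L}\coloneqq\{x:\sfd(x,B)\le\mathrm L\}$; however $\X$ need not be proper, so boundedness alone does not give compactness. Instead, fix a function $\psi\colon\X\to[0,+\infty]$ with compact sublevels and $\int\psi\,\d\mm<+\infty$ on a fixed bounded neighbourhood — more precisely, use that each $(\e_0)_\#\ppi_n\le\mathrm C\,\mm|_{B_{\mathrm L}}$, so $\{(\e_0)_\#\ppi_n\}_n$ is tight by Prokhorov applied to the single finite measure $\mathrm C\,\mm|_{B_{\mathrm L}}$; get compact sets $K_\eps\subseteq\X$ with $(\e_0)_\#\ppi_n(\X\setminus K_\eps)\le\eps$ for all $n$. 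Then define the candidate functional $\Psi(\gamma)\coloneqq\varphi(\gamma_0)$ where $\varphi$ has compact sublevels and $\int\varphi\,\d(\e_0)_\#\ppi_n$ is uniformly bounded (such $\varphi$ exists by the Prokhorov characterisation applied to the tight family $\{(\e_0)_\#\ppi_n\}_n$). The sublevel $\{\Psi\le c\}=\{\gamma:\gamma_0\in\{\varphi\le c\}\}$, intersected with the closed set of $\mathrm L$-Lipschitz curves, is then compact in $\big(C([0,1],\X),\sfd_{\rm sup}\big)$ precisely by the revisited Arzel\`a--Ascoli Theorem \ref{thm:AA}: condition i) holds since all these curves are $\mathrm L$-Lipschitz, and condition ii) holds because at time $0$ the curves lie in the compact set $\{\varphi\le c\}$ and hence, being $\mathrm L$-Lipschitz, stay within the $\mathrm L$-neighbourhood of $\{\varphi\le c\}$ — but to get a genuine compact set at \emph{all} times $t$ I should instead use the evaluation maps at a countable dense set of times. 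Concretely I add $\sum_{k}2^{-k}\varphi_k(\gamma_{t_k})$ over a countable dense $\{t_k\}\subseteq[0,1]$, with each $\{(\e_{t_k})_\#\ppi_n\}_n$ tight (again via $(\e_{t_k})_\#\ppi_n\le\mathrm C\,\mm|_{B_{\mathrm L}}$ and Prokhorov): its sublevels intersected with the $\mathrm L$-Lipschitz curves satisfy both hypotheses of Theorem \ref{thm:AA}, so the family $(\ppi_n)$ is tight. By Prokhorov's theorem there is a subsequence $(\ppi_{n_i})$ and $\ppi\in\mathscr P\big(C([0,1],\X)\big)$ with $\ppi_{n_i}\rightharpoonup\ppi$.

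\emph{Step 2: The limit $\ppi$ is an $\infty$-test plan.} This is where the work of the preceding propositions pays off. First, $\mathrm{Lip}(\ppi)<+\infty$: by Proposition \ref{prop:Mosco_conv_tp}(i), applied with any sequence $q_m\nearrow\infty$, we have $\mathrm{Lip}(\ppi)\le\limi_m\mathrm{Ke}_{q_m}^{1/q_m}(\ppi)$, but more directly the lower semicontinuity of $\mathrm{Lip}$ stated in that proposition gives $\mathrm{Lip}(\ppi)\le\limi_i\mathrm{Lip}(\ppi_{n_i})\le\mathrm L<+\infty$, so \eqref{eq:def_test_plan2} holds. Second, $\ppi$ has finite compression: this is exactly the content of the second paragraph of Remark \ref{rmk:cont_eval_meas} — since each $(\e_t)_\#\ppi_{n_i}\le\mathrm{Comp}(\ppi_{n_i})\,\mm\le\mathrm C\,\mm$ and $(\e_t)_\#$ is narrowly continuous, passing to the limit along $\varphi\in C_b(\X)^+$ yields $(\e_t)_\#\ppi\le\mathrm C\,\mm$ for every $t\in[0,1]$, so \eqref{eq:def_test_plan1} holds with constant $\mathrm C$. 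Therefore $\ppi\in\Pi_\infty(\X)$, and in fact $\mathrm{Lip}(\ppi)\le\mathrm L$, $\mathrm{Comp}(\ppi)\le\mathrm C$, which completes the proof.

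\emph{Main obstacle.} The only genuinely delicate point is Step 1: establishing tightness of $(\ppi_n)$ on the curve space when $\X$ is merely complete and separable, not proper. The resolution is to push tightness at the level of curves down to tightness of the time-marginals $(\e_{t_k})_\#\ppi_n$ at countably many times — which is automatic from $(\e_{t_k})_\#\ppi_n\le\mathrm C\,\mm$ restricted to the bounded set swept out by the uniformly Lipschitz curves — and then recombine via the Arzel\`a--Ascoli variant Theorem \ref{thm:AA}, whose hypotheses are tailored exactly for this. Once tightness is in hand, Step 2 is a routine application of Proposition \ref{prop:Mosco_conv_tp} and Remark \ref{rmk:cont_eval_meas}.
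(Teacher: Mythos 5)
Your overall architecture is the same as the paper's: build a functional on \(C([0,1],\X)\) with compact sublevels and uniformly bounded \(\ppi_n\)-integrals, invoke the Prokhorov criterion to get a narrowly convergent subsequence, and then use Remark \ref{rmk:cont_eval_meas} together with the lower semicontinuity of \({\rm Lip}\) to see that the limit is again an \(\infty\)-test plan (your Step 2 is correct, and indeed more explicit than the paper, which leaves that last verification implicit). The gap is in Step 1, precisely where you assert that the sublevels of \(\Psi(\gamma)=\sum_k 2^{-k}\varphi_k(\gamma_{t_k})\), intersected with the \({\rm L}\)-Lipschitz curves, ``satisfy both hypotheses of Theorem \ref{thm:AA}''. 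Condition ii) of that theorem asks for a \emph{single compact} set \(K_\eps\subseteq\X\) such that \(\gamma_t\in K_\eps\) for all \(t\) outside a set of Lebesgue measure at most \(\eps\), uniformly in \(\gamma\). From \(\Psi(\gamma)\leq c\) you only learn that \(\gamma_{t_k}\) lies in the compact set \(C_k\coloneqq\{\varphi_k\leq 2^kc\}\) for each \(k\); a finite union \(\bigcup_{k\leq K}C_k\) controls \(\gamma\) only on the Lebesgue-null set \(\{t_1,\dots,t_K\}\), the full union \(\bigcup_k C_k\) need not be precompact, and (as you yourself observe for the time-\(0\) version) the \({\rm L}\delta\)-neighbourhood of a compact set is not compact when \(\X\) is not proper. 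So condition ii) is not verified by your functional and cannot be extracted from it.

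Two remarks on closing the gap. Your sublevel sets are in fact compact, but proving it requires a direct diagonal argument (extract a subsequence along which \(\gamma^{(j)}_{t_k}\) converges for every \(k\), then use equi-Lipschitzianity, density of \(\{t_k\}\) and completeness of \(\X\) to upgrade to uniform convergence); that is, you would be reproving Arzel\`a--Ascoli rather than applying Theorem \ref{thm:AA}. The paper sidesteps this by choosing the time-integrated functional \(\Psi(\gamma)=\int_0^1\psi(\gamma_t)\,\d t+{\rm Lip}(\gamma)\), where a single \(\psi\) works for all the marginals simultaneously because \((\e_t)_\#\ppi_n\leq{\rm C}\,\mm|_B\) for every \(t\) and \(n\); then Chebyshev's inequality in the time variable gives \(\mathcal L_1\big(\{t:\gamma_t\notin\{\psi\leq\lambda/\eps\}\}\big)\leq\eps\) on the sublevel \(\{\Psi\leq\lambda\}\), which is exactly condition ii). A secondary point: with your weights \(2^{-k}\) you also need \(\sum_k 2^{-k}\sup_n\int\varphi_k\,\d(\e_{t_k})_\#\ppi_n<+\infty\), which requires normalising the \(\varphi_k\) --- or, better, taking one \(\varphi\) independent of \(k\), which is possible for the reason just mentioned.
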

\begin{proof}
Setting \(S\coloneqq\bigcup_{n\in\N}{\rm spt}\big((\e_0)_\#\ppi_n\big)\)
and \({\rm L}\coloneqq\sup_{n\in\N}{\rm Lip}(\ppi_n)\), we have that
the closed \({\rm L}\)-neighbourhood \(B\) of \(S\) is a bounded set
containing \(\bigcup_{n\in\N}{\rm spt}(\ppi_n)\). Indeed,
for every \(n\in\N\) and \(\ppi_n\)-a.e.\ \(\gamma\) we have that
\(\gamma_0\in S\) and \(\sfd(\gamma_t,\gamma_0)\leq{\rm Lip}(\ppi_n)
\leq{\rm L}\) hold for all \(t\in[0,1]\). Recall that \(\mm(B)\) is
finite and thus the measure \(\mm|_B\) is tight. Since
\((\e_t)_\#\ppi_n\leq{\rm C}\mm|_B\) for every \(n\in\N\) and
\(t\in[0,1]\), where we set
\({\rm C}\coloneqq\sup_{n\in\N}{\rm Comp}(\ppi_n)\),
we deduce that the family
\[
\big\{(\e_t)_\#\ppi_n\,:\,n\in\N,\,t\in[0,1]\big\}\subseteq
\mathscr P(\X)\quad\text{ is tight.}
\]
Therefore, there exists a function \(\psi\colon\X\to[0,+\infty]\)
having compact sublevels such that
\begin{equation}\label{eq:compactness_tp_aux}
\sup\bigg\{\int\psi\,\d(\e_t)_\#\ppi_n\,:\,n\in\N,\,t\in[0,1]\bigg\}
<+\infty.
\end{equation}
Now let us define \(\Psi\colon C([0,1],\X)\to[0,+\infty]\) as
\(\Psi(\gamma)\coloneqq\int_0^1\psi(\gamma_t)\,\d t+{\rm Lip}(\gamma)\)
if \(\gamma\in{\rm LIP}([0,1],\X)\) and \(\Psi(\gamma)\coloneqq+\infty\)
otherwise. We claim that \(\Psi\) has compact sublevels. To prove
it amounts to showing that the set \(\mathcal K_\lambda\coloneqq
\big\{\gamma\in C([0,1],\X)\,:\,\Psi(\gamma)\leq\lambda\big\}\)
is compact for any given \(\lambda>0\). First,
\(\mathcal K_\lambda\) is closed: if \((\gamma^n)_{n\in\N}\subseteq
\mathcal K_\lambda\) and \(\gamma\in C([0,1],\X)\) satisfy
\(\lim_{n\to\infty}\sfd_{\rm sup}(\gamma^n,\gamma)=0\), then
\[\begin{split}
\Psi(\gamma)&\leq\int_0^1\limi_{n\to\infty}\psi(\gamma^n_t)\,\d t
+\limi_{n\to\infty}{\rm Lip}(\gamma^n)\leq
\limi_{n\to\infty}\int_0^1\psi(\gamma^n_t)\,\d t+
\limi_{n\to\infty}{\rm Lip}(\gamma^n)\\
&\leq\limi_{n\to\infty}\Psi(\gamma^n)\leq\lambda,
\end{split}\]
where we used the lower semicontinuity of \(\psi\) and
Fatou's lemma. This shows that \(\gamma\in\mathcal K_\lambda\)
and thus \(\mathcal K_\lambda\) is closed. In order to prove
that \(\mathcal K_\lambda\) is compact, we want to use Theorem
\ref{thm:AA}. The set \(\mathcal K_\lambda\) verifies item i) of
Theorem \ref{thm:AA}, since \({\rm Lip}(\gamma)\leq\lambda\)
for all \(\gamma\in\mathcal K_\lambda\). About item ii), fix
\(\varepsilon>0\) and pick the compact set
\(K_{\lambda,\varepsilon}\coloneqq\big\{x\in\X\,:
\,\psi(x)\leq\lambda/\varepsilon\big\}\).
For any \(\gamma\in\mathcal K_\lambda\), one has
\[
\mathcal L_1\big(\big\{t\in[0,1]\,:\,\gamma_t\notin
K_{\lambda,\varepsilon}\big\}\big)=
\mathcal L_1\big(\big\{t\in[0,1]\,:\,\psi(\gamma_t)>
\lambda/\varepsilon\big\}\big)\leq
\frac{\varepsilon}{\lambda}\int_0^1\psi(\gamma_t)\,\d t\leq\varepsilon,
\]
where we used Chebyshev's inequality. This shows that
\(\mathcal K_\lambda\) verifies item ii) of Theorem \ref{thm:AA}
and thus it is compact. All in all, we proved that \(\Psi\) has
compact sublevels. Finally, note that
\[\begin{split}
\sup_{n\in\N}\int\Psi\,\d\ppi_n&\leq
\sup_{n\in\N}\int\!\!\!\int_0^1\psi(\gamma_t)\,\d t\,\d\ppi_n(\gamma)
+\sup_{n\in\N}\int{\rm Lip}(\gamma)\,\d\ppi_n(\gamma)\\
&\leq\sup_{n\in\N}\int_0^1\!\!\!\int\psi\,\d(\e_t)_\#\ppi_n\,\d t
+\sup_{n\in\N}{\rm Lip}(\ppi_n)\\
&\leq\underset{\substack{n\in\N,\\t\in[0,1]}}\sup
\int\psi\,\d(\e_t)_\#\ppi_n+{\rm L}
\overset{\eqref{eq:compactness_tp_aux}}<+\infty.
\end{split}\]
Therefore, an application of Prokhorov's theorem yields the statement.
\end{proof}
\subsection{Functions of bounded \texorpdfstring{\(\Pi\)}{Pi}-variation}
We propose a notion of function having bounded \(\Pi\)-variation
on a metric measure space, where \(\Pi\) is any given family of
\(\infty\)-test plans. It is the natural generalisation of the
concepts introduced in \cite[Section 5.3]{Ambrosio-DiMarino14}
and \cite[Section 4.4.3]{DiMarinoPhD}.
\begin{definition}[Function of bounded \(\Pi\)-variation]\label{def:BV}
Let \((\X,\sfd,\mm)\) be a metric measure space. Let \(\Pi\neq\emptyset\)
be a family of \(\infty\)-test plans on \((\X,\sfd,\mm)\).
Then we declare that \(f\in L^1(\mm)\) belongs to the space
\({\rm BV}_\Pi(\X)\) provided the following properties
are verified:
\begin{itemize}
\item[\(\rm i)\)] Given any \(\ppi\in\Pi\), it holds that
\(f\circ\gamma\in{\rm BV}(0,1)\) for \(\ppi\)-a.e.\ \(\gamma\).
\item[\(\rm ii)\)] There exists a finite Borel measure \(\mu\geq 0\)
on \((\X,\sfd)\) such that for any \(\ppi\in\Pi\) it holds
\begin{equation}\label{eq:def_BV}
\int\gamma_\#|D(f\circ\gamma)|(B)\,\d\ppi(\gamma)\leq
{\rm Comp}(\ppi){\rm Lip}(\ppi)\mu(B),\quad\text{ for every }
B\subseteq\X\text{ Borel.}
\end{equation}
\end{itemize}
We denote by \(|{\boldsymbol D}f|_\Pi\) the least measure
\(\mu\geq 0\) satisfying \eqref{eq:def_BV} for every \(\ppi\in\Pi\).
\end{definition}
\begin{remark}\label{rem:BVwellposed}
{\rm
Let us briefly comment on the well-posedness of Definition
\ref{def:BV}. The bounded compression assumption on \(\ppi\)
ensures that the function \(f\circ\gamma\in L^1(0,1)\) is
\(\ppi\)-a.e.\ independent of the
chosen representative of \(f\), so that item i) is well-defined.
Concerning ii), it is easy to show that the family \(\mathcal D\)
of all Borel sets \(B\subseteq\X\) for which
\(\gamma\mapsto\gamma_\#|D(f\circ\gamma)|(B)\) is Borel measurable
is a Dynkin system. By exploiting the very definition of the total
variation measure \(|D(f\circ\gamma)|\), one can show that the topology
of \((\X,\sfd)\) is contained in \(\mathcal D\), thus an application
of the Dynkin \(\pi\)-\(\lambda\) theorem ensures that \(\mathcal D\)
coincides with the Borel \(\sigma\)-algebra of \((\X,\sfd)\).
Accordingly, the integral appearing in the left-hand side of
\eqref{eq:def_BV} is meaningful for every \(B\subseteq\X\) Borel.
\fr}\end{remark}

When considering the totality of \(\infty\)-test plans
on \((\X,\sfd,\mm)\), we use the shorthand notation
\[
{\rm BV}(\X)\coloneqq{\rm BV}_{\Pi_\infty(\X)}(\X),
\qquad|{\boldsymbol D}f|\coloneqq|{\boldsymbol D}f|_{\Pi_\infty(\X)}
\quad\text{for every }f\in{\rm BV}(\X).\]
The equivalence between \({\rm BV}(\X)\) and the space
\(w-BV(\X,\sfd,\mm)\) defined in \cite[Section 5.3]{Ambrosio-DiMarino14}
(see also \cite[Section 4.4.3]{DiMarinoPhD}) follows from the ensuing result.
\begin{lemma}\label{lem:bdry_cond_BV}
Let \((\X,\sfd,\mm)\) be a metric measure space. Let \(\ppi\) be a
\(\infty\)-test plan on \((\X,\sfd,\mm)\). Suppose \(f\in L^1(\mm)\)
satisfies \(f\circ\gamma\in{\rm BV}(0,1)\) for \(\ppi\)-a.e.\ \(\gamma\).
Fix any \(\bar t\in[0,1]\). Then it holds
\[
|D(f\circ\gamma)|(\{\bar t\})=0,\quad\text{ for }\ppi\text{-a.e.\ }\gamma.
\]
In particular, it holds that
\(\big|f(\gamma_1)-f(\gamma_0)\big|\leq|D(f\circ\gamma)|((0,1))\)
for \(\ppi\)-a.e.\ \(\gamma\).
\end{lemma}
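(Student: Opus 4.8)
The plan is to reduce the whole statement to one soft fact: for every \(g\in L^1(\mm)\), the curve \([0,1]\ni t\mapsto g\circ\e_t\in L^1(\ppi)\) is continuous. This is the only point where the bounded compression of \(\ppi\) enters; the \({\rm BV}\)-regularity of \(f\circ\gamma\) along \(\ppi\)-a.e.\ \(\gamma\) will then only serve to identify the one-sided traces of \(f\circ\gamma\) at \(\bar t\) with averages of \(f\circ\gamma\). To prove the soft fact, note first that \((\e_t)_\#\ppi\leq\Comp(\ppi)\mm\) gives \(\|g\circ\e_t\|_{L^1(\sppi)}\leq\Comp(\ppi)\|g\|_{L^1(\mm)}\), so \(g\mapsto\big(t\mapsto g\circ\e_t\big)\) is a bounded linear operator from \(L^1(\mm)\) into the space of bounded maps \([0,1]\to L^1(\ppi)\), equipped with the supremum norm. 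For \(g\in C_b(\X)\) the map \(t\mapsto g\circ\e_t\) is continuous by dominated convergence (using that \(t\mapsto\gamma_t\) is continuous and \(|g|\leq\|g\|_{\sup}\)). Since Lipschitz functions with bounded support are dense in \(L^1(\mm)\) (recall that \(\mm\) is boundedly finite on a complete separable space) and the operator above is bounded, a uniform-approximation argument upgrades the continuity to every \(g\in L^1(\mm)\); equivalently, \(\lim_{s\to t}\int|g(\gamma_s)-g(\gamma_t)|\,\d\ppi(\gamma)=0\) for all \(g\in L^1(\mm)\) and \(t\in[0,1]\).

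Now fix \(\bar t\in[0,1]\) and a Borel representative of \(f\). For \(\ppi\)-a.e.\ \(\gamma\) the function \(u_\gamma\coloneqq f\circ\gamma\) belongs to \({\rm BV}(0,1)\); denote by \(\overline{u_\gamma}\) its good representative, which has one-sided limits \(\overline{u_\gamma}(\bar t^{\pm})\) at every point, equals \(f(\gamma_s)\) for \(\mathcal L^1\)-a.e.\ \(s\), and satisfies the classical one-dimensional identity \(|Du_\gamma|(\{\bar t\})=|\overline{u_\gamma}(\bar t^+)-\overline{u_\gamma}(\bar t^-)|\) whenever \(\bar t\in(0,1)\) (only the jump part of \(Du_\gamma\) may charge a point). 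Because \(\overline{u_\gamma}\) has a right limit at \(\bar t\) and coincides a.e.\ with \(f(\gamma_\cdot)\), one has \(\overline{u_\gamma}(\bar t^+)=\lim_{h\to0^+}\frac1h\int_{\bar t}^{\bar t+h}f(\gamma_s)\,\d s\) for \(\bar t<1\), and symmetrically for the left limit when \(\bar t>0\). The key claim is that \(\overline{u_\gamma}(\bar t^+)=f(\gamma_{\bar t})\) for \(\ppi\)-a.e.\ \(\gamma\) when \(\bar t<1\) (and likewise on the left when \(\bar t>0\)): by Fatou's lemma and Fubini's theorem,
\[
\int\big|\overline{u_\gamma}(\bar t^+)-f(\gamma_{\bar t})\big|\,\d\ppi(\gamma)\leq\limi_{h\to0^+}\frac1h\int_{\bar t}^{\bar t+h}\big\|f\circ\e_s-f\circ\e_{\bar t}\big\|_{L^1(\sppi)}\,\d s,
\]
and the integrand is bounded by \(2\Comp(\ppi)\|f\|_{L^1(\mm)}\) and tends to \(0\) as \(s\to\bar t\) by the first paragraph, whence the right-hand side vanishes and the claim follows.

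The conclusion is then immediate. For \(\bar t\in(0,1)\), combining the two displayed identifications above yields \(|D(f\circ\gamma)|(\{\bar t\})=|\overline{u_\gamma}(\bar t^+)-\overline{u_\gamma}(\bar t^-)|=0\) for \(\ppi\)-a.e.\ \(\gamma\); for \(\bar t\in\{0,1\}\) the atom of \(|D(f\circ\gamma)|\) at \(\{\bar t\}\) vanishes trivially (as \(Du_\gamma\) is a measure on the open interval \((0,1)\)), while the key claim additionally gives \(f(\gamma_0)=\overline{u_\gamma}(0^+)\) and \(f(\gamma_1)=\overline{u_\gamma}(1^-)\) for \(\ppi\)-a.e.\ \(\gamma\). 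For the last assertion, letting continuity points tend to \(0^+\) and \(1^-\) gives \(\overline{u_\gamma}(1^-)-\overline{u_\gamma}(0^+)=Du_\gamma((0,1))\), hence \(|f(\gamma_1)-f(\gamma_0)|=|Du_\gamma((0,1))|\leq|D(f\circ\gamma)|((0,1))\) for \(\ppi\)-a.e.\ \(\gamma\).

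The crux is the soft fact of the first paragraph. Without bounded compression the jump of \(f\circ\gamma\) could perfectly well be concentrated at a single fixed time \(\bar t\) for \(\ppi\)-a.e.\ \(\gamma\) — for instance \(\ppi=\delta_\gamma\) with \(\gamma\) non-constant — so no purely curvewise argument can work; it is precisely the \(L^1(\ppi)\)-continuity forced by \((\e_t)_\#\ppi\leq\Comp(\ppi)\mm\) that makes the average of \(s\mapsto\|f\circ\e_s-f\circ\e_{\bar t}\|_{L^1(\sppi)}\) vanish near \(\bar t\). The remaining points are routine: the Borel measurability of the \(\gamma\)-dependent quantities follows by writing them as pointwise limits of the average functionals above (in the spirit of Remark \ref{rem:BVwellposed}), and the fact that \(f(\gamma_{\bar t})\) and the \(L^1\)-class of \(f\circ\gamma\) do not depend on the chosen representative of \(f\) follows from \((\e_{\bar t})_\#\ppi\ll\mm\) together with Fubini's theorem.
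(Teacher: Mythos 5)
Your proof is correct and follows essentially the same route as the paper's: both approximate \(f\) in \(L^1(\mm)\) by continuous/Lipschitz functions, use the bounded-compression bound \((\e_t)_\#\ppi\leq{\rm Comp}(\ppi)\mm\) to control the error terms, and thereby identify the one-sided traces of \(f\circ\gamma\) at \(\bar t\) with \(f(\gamma_{\bar t})\) for \(\ppi\)-a.e.\ \(\gamma\). The only cosmetic differences are that you package the key estimate as continuity of \(t\mapsto f\circ\e_t\) in \(L^1(\ppi)\) and conclude via Fatou's lemma, whereas the paper estimates the averaged quantity along a sequence \(t_n\searrow 0\) and extracts a \(\ppi\)-a.e.\ convergent subsequence.
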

\begin{proof}
Let us just prove the statement in the case where \(\bar t=0\),
since the other \(\bar t\)'s can be treated in a similar way.
To prove it amounts to showing that
\begin{equation}\label{eq:bdry_cond_claim}
\lim_{t\searrow 0}\fint_0^t\big|f(\gamma_s)-f(\gamma_0)\big|\,\d s=0,
\quad\text{ for }\ppi\text{-a.e.\ }\gamma.
\end{equation}
 We know
\cite[Theorems 3.27 and 3.28]{AmbrosioFuscoPallara} that there exists
\(\big\{\lambda_\gamma\,:\,\gamma\in C([0,1],\X)\big\}\subseteq\R\)
such that
\begin{equation}\label{eq:bdry_cond_aux}
\lim_{t\searrow 0}\fint_0^t\big|f(\gamma_s)-\lambda_\gamma\big|\,\d s=0,
\quad\text{ for }\ppi\text{-a.e.\ }\gamma.
\end{equation}
Now fix any sequence \(t_n\searrow 0\). Given any \(\varepsilon>0\),
we can find a function \(f_\varepsilon\in{\rm LIP}_{bs}(\X)\) such that
\(\|f-f_\varepsilon\|_{L^1(\mm)}\leq\varepsilon\). Then for any
\(n\in\N\) we may estimate
\[\begin{split}
&\int\!\!\!\fint_0^{t_n}\big|f(\gamma_s)-f(\gamma_0)\big|\,\d s\,\d\ppi(\gamma)\\
\leq\,&\int\!\!\!\fint_0^{t_n}|f-f_\varepsilon|\circ\e_s\,\d s\,\d\ppi
+\int\!\!\!\fint_0^{t_n}\big|f_\varepsilon(\gamma_s)-
f_\varepsilon(\gamma_0)\big|\,\d s\,\d\ppi(\gamma)
+\int|f_\varepsilon-f|\circ\e_0\,\d\ppi\\
\leq\,&2\,{\rm Comp}(\ppi)\|f-f_\varepsilon\|_{L^1(\mm)}+
{\rm Lip}(f_\varepsilon)\int\!\!\!\fint_0^{t_n}\sfd(\gamma_s,\gamma_0)
\,\d s\,\d\ppi(\gamma)\\
\leq\,&2\,{\rm Comp}(\ppi)\varepsilon+{\rm Lip}(f_\varepsilon){\rm Lip}(\ppi)t_n.
\end{split}\]
By first letting \(n\to\infty\) and then \(\varepsilon\searrow 0\),
we see that \(\lim_{n\to\infty}\int\!\!\!\fint_0^{t_n}
\big|f(\gamma_s)-f(\gamma_0)\big|\,\d s\,\d\ppi(\gamma)=0\).
Hence, up to taking a not relabelled subsequence, we have
\(\lim_{n\to\infty}\fint_0^{t_n}\big|f(\gamma_s)-f(\gamma_0)\big|\,\d s=0\)
for \(\ppi\)-a.e.\ \(\gamma\). Recalling \eqref{eq:bdry_cond_aux},
we conclude that \(\lambda_\gamma=f(\gamma_0)\) and
\(\lim_{t\searrow 0}\fint_0^t\big|f(\gamma_s)-f(\gamma_0)\big|\,\d s=0\)
hold for \(\ppi\)-a.e.\ \(\gamma\). This proves the validity of
the \(\ppi\)-a.e.\ identity in \eqref{eq:bdry_cond_claim}, as desired.
\end{proof}
Before passing to the last result of this section, let us
introduce some further terminology. Given an open set \(\Omega\)
in a metric space \((\X,\sfd)\), we denote by \(\LIP_{loc}(\Omega)\)
the family of all real-valued, locally Lipschitz functions defined
on \(\Omega\), \emph{i.e.}, of all functions \(f\colon\Omega\to\R\)
having the property that for any \(x\in\Omega\) there exists a radius
\(r_x>0\) with \(B_{r_x}(x)\subseteq\Omega\) such that
\(f|_{B_{r_x}(x)}\) is Lipschitz. Given any \(f\in\LIP_{loc}(\Omega)\),
we define the function \(\lip_a(f)\colon\Omega\to[0,+\infty)\) as
\[
\lip_a(f)(x)\coloneqq\lims_{y,z\to x}
\frac{\big|f(y)-f(z)\big|}{\sfd(y,z)},
\quad\text{ if }x\in\Omega\text{ is an accumulation point},
\]
and \(\lip_a(f)(x)\coloneqq 0\) otherwise. We call
\(\lip_a(f)\) the asymptotic Lipschitz constant of \(f\).
\medskip

The space \({\rm BV}(\X)\) admits the following equivalent
characterisation, cf.\ \cite[Theorem 4.5.3]{DiMarinoPhD}.
\begin{theorem}\label{thm:equiv_BV}
Let \((\X,\sfd,\mm)\) be a metric measure space. Then a given function
\(f\in L^1(\mm)\) belongs to \({\rm BV}(\X)\) if and only if there
exists a constant \(C\geq 0\) such that
\begin{equation}\label{eq:def_BV_equiv}
\int f(\gamma_1)-f(\gamma_0)\,\d\ppi(\gamma)\leq{\rm Comp}(\ppi)
{\rm Lip}(\ppi)C,\quad\text{ for every }\ppi\in\Pi_\infty(\X).
\end{equation}
In this case, it holds that \(|{\boldsymbol D}f|(\X)\) coincides with
the minimal constant \(C\geq 0\) satisfying \eqref{eq:def_BV_equiv}.

Moreover, given any function \(f\in{\rm BV}(\X)\) and an open set
\(\Omega\subseteq\X\), there exists a sequence \((f_n)_{n\in\N}
\subseteq{\rm LIP}_{loc}(\Omega)\cap L^1(\mm|_\Omega)\) such that
\(f_n\to f\) in \(L^1(\mm|_\Omega)\) and \(\int_\Omega{\rm lip}_a(f_n)
\,\d\mm\to|{\boldsymbol D}f|(\Omega)\).
\end{theorem}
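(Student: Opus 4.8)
The plan is to prove the two implications separately and then extract the approximation statement from the relaxation‑based description of the total variation.

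\emph{The implication \eqref{eq:def_BV}$\Rightarrow$\eqref{eq:def_BV_equiv}} is the short one. Let \(f\in{\rm BV}(\X)\) and fix any \(\ppi\in\Pi_\infty(\X)\). By Lemma \ref{lem:bdry_cond_BV}, for \(\ppi\)-a.e.\ \(\gamma\) we have \(f(\gamma_1)-f(\gamma_0)\leq\big|f(\gamma_1)-f(\gamma_0)\big|\leq|D(f\circ\gamma)|((0,1))\), and the same lemma gives \(|D(f\circ\gamma)|(\{0\})=|D(f\circ\gamma)|(\{1\})=0\), whence \(|D(f\circ\gamma)|((0,1))=|D(f\circ\gamma)|([0,1])=\gamma_\#|D(f\circ\gamma)|(\X)\). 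Integrating against \(\ppi\) and applying \eqref{eq:def_BV} with \(B=\X\) (for \(\mu=|{\boldsymbol D}f|\)) yields \(\int f(\gamma_1)-f(\gamma_0)\,\d\ppi(\gamma)\leq{\rm Comp}(\ppi){\rm Lip}(\ppi)|{\boldsymbol D}f|(\X)\). Thus \eqref{eq:def_BV_equiv} holds with \(C=|{\boldsymbol D}f|(\X)\); in particular, if \(C_0\) denotes the least constant admissible in \eqref{eq:def_BV_equiv}, then \(C_0\leq|{\boldsymbol D}f|(\X)\).

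\emph{The implication \eqref{eq:def_BV_equiv}$\Rightarrow$\eqref{eq:def_BV}} is the substantive part, and it is the content of \cite{Ambrosio-DiMarino14, DiMarinoPhD}; I would organise it as follows. Suppose \(f\in L^1(\mm)\) satisfies \eqref{eq:def_BV_equiv} with constant \(C\). One localises the scalar inequality by defining, for an open set \(\Omega\subseteq\X\), the quantity \(\nu(\Omega)\) as the supremum of \(\int(f(\gamma_1)-f(\gamma_0))\,\d\ppi(\gamma)\) over all \(\infty\)-test plans \(\ppi\) that are concentrated on curves with image in \(\Omega\) and satisfy \({\rm Comp}(\ppi){\rm Lip}(\ppi)\leq 1\); thus \(\nu(\X)\leq C\) by \eqref{eq:def_BV_equiv}. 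The heart of the matter is to show that \(\Omega\mapsto\nu(\Omega)\) is countably additive on pairwise disjoint open sets, hence is the restriction to open sets of a finite Borel measure \(\mu\) with \(\mu(\X)\leq C\); along the way one uses \eqref{eq:def_BV_equiv}, applied to well‑chosen plans (restrictions, reparametrisations, and gluings), to check that \(f\circ\gamma\in{\rm BV}(0,1)\) for \(\ppi\)-a.e.\ \(\gamma\) and every \(\ppi\in\Pi_\infty(\X)\), and that \(\mu\) satisfies \eqref{eq:def_BV}. Equivalently — and this is the route realised in the literature — one identifies \(\mu\) with the relaxed total variation
\[
|Df|_*(\Omega)\coloneqq\inf\Big\{\liminf_{n\to\infty}\int_\Omega{\rm lip}_a(f_n)\,\d\mm\ :\ (f_n)_n\subseteq{\rm LIP}_{loc}(\Omega)\cap L^1(\mm|_\Omega),\ f_n\to f\text{ in }L^1(\mm|_\Omega)\Big\},
\]
whose identification with \(|{\boldsymbol D}f|\) is the substance of \cite[Theorem 4.5.3]{DiMarinoPhD} (see also \cite[Section 5.3]{Ambrosio-DiMarino14}). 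This yields \(f\in{\rm BV}(\X)\) together with \(|{\boldsymbol D}f|(\X)\leq C\); taking \(C=C_0\) and combining with the first paragraph gives \(|{\boldsymbol D}f|(\X)=C_0\), i.e.\ the claimed identification of the minimal constant.

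\emph{Main obstacle and the approximation statement.} The one genuinely delicate point — the reason \eqref{eq:def_BV_equiv}$\Rightarrow$\eqref{eq:def_BV} cannot be carried out by a bare restriction of test plans — is the non‑locality of the compression constant: restricting \(\ppi\) to the Borel set \(\{\gamma:\gamma([0,1])\subseteq\Omega\}\) multiplies \({\rm Comp}\) by the reciprocal of the \(\ppi\)-measure of that set, so the bound \eqref{eq:def_BV_equiv} does not descend to subplans for free, and circumventing this needs the cutoff‑and‑glue constructions of \cite{Ambrosio-DiMarino14, DiMarinoPhD} (precisely the phenomenon recorded in the Introduction and addressed by the curvewise formulation of Section \ref{s:curvewise_BV}). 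Finally, the approximation assertion follows immediately once \(|{\boldsymbol D}f|=|Df|_*\) is available: given \(f\in{\rm BV}(\X)\) and an open set \(\Omega\), choose \((f_n)_n\subseteq{\rm LIP}_{loc}(\Omega)\cap L^1(\mm|_\Omega)\) with \(f_n\to f\) in \(L^1(\mm|_\Omega)\) almost attaining the infimum defining \(|Df|_*(\Omega)=|{\boldsymbol D}f|(\Omega)\), pass to a subsequence along which the \(\liminf\) becomes a genuine limit, and invoke the lower‑semicontinuity inequality \(|{\boldsymbol D}f|(\Omega)\leq\liminf_n\int_\Omega{\rm lip}_a(f_n)\,\d\mm\) (valid for any sequence converging to \(f\) in \(L^1(\mm|_\Omega)\), and part of the same equivalence) to conclude that \(\int_\Omega{\rm lip}_a(f_n)\,\d\mm\to|{\boldsymbol D}f|(\Omega)\).
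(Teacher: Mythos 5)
Your proposal is correct and takes essentially the same route as the paper: the paper states this theorem as an imported result, citing \cite{Ambrosio-DiMarino14} and \cite[Theorem 4.5.3]{DiMarinoPhD} rather than proving it, and your write-up supplies the elementary implication exactly via Lemma \ref{lem:bdry_cond_BV} and the defining inequality \eqref{eq:def_BV} with \(B=\X\), while correctly attributing the substantive converse and the identification of \(|{\boldsymbol D}f|\) with the relaxed functional (from which the approximation statement follows at once) to those same references.
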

\subsection{Optimal transport}
In this subsection we recall some basic notions in Optimal Transport
theory. The problem of optimal transportation dates back to G.\ Monge \cite{Monge81} and seeks for optimal transport maps minimising the transportation cost of two probability measures in the Euclidean space. Here instead we consider the more general formulation of
L.\ Kantorovich \cite{Kantorovich40} in terms of optimal plans and follow modern approaches to the theory on Polish metric spaces,
referring for instance to \cite{Villani09,AmbrosioGigli11} for a thorough presentation of this topic.
\medskip

Given a complete and separable metric space \((\X,\sfd)\) and
an exponent \(q\in[1,\infty]\), we denote by \(\mathscr P_q(\X)\)
the space of all Borel probability measures \(\mu\in\mathscr P(\X)\)
having finite \(q^{th}\)-moment, \emph{i.e.}, satisfying
\(\sfd(\cdot,\bar x)\in L^q(\mu)\) for some (and, thus, for any)
\(\bar x\in\X\). Observe that \(\mathscr P_{q'}(\X)\subseteq
\mathscr P_q(\X)\) for every \(q,q'\in[1,\infty]\) with \(q\leq q'\).
Given any \(\mu_0,\mu_1\in\mathscr P(\X)\), we say
that a probability measure \(\alpha\in\mathscr P(\X\times\X)\)
is an admissible plan between \(\mu_0\) and \(\mu_1\) provided
\((P_0)_\#\alpha=\mu_0\) and \((P_1)_\#\alpha=\mu_1\), where
\(P_0,P_1\colon\X\times\X\to\X\) stand for the projection maps
\(P_0(x,y)\coloneqq x\) and \(P_1(x,y)\coloneqq y\). The family
of all admissible plans between \(\mu_0\) and \(\mu_1\) is
denoted by \({\rm Adm}(\mu_0,\mu_1)\). For \(q<\infty\),
the \(q\)-Wasserstein distance \(W_q\) on \(\mathscr P_q(\X)\)
is defined as
\begin{equation}\label{eq:def_Wq}
W_q^q(\mu_0,\mu_1)\coloneqq\inf_{\alpha\in{\rm Adm}(\mu_0,\mu_1)}
\int\sfd^q(x,y)\,\d\alpha(x,y)
,\quad\text{ for every }
\mu_0,\mu_1\in\mathscr P_q(\X),
\end{equation}
while in the limit case \(q=\infty\), the \(\infty\)-Wasserstein distance
\(W_\infty\) on \(\mathscr P_\infty(\X)\) is defined as
\begin{equation}\label{eq:def_Winfty}
W_\infty(\mu_0,\mu_1)\coloneqq\inf_{\alpha\in{\rm Adm}(\mu_0,\mu_1)}
\underset{\alpha\text{-a.e.\ }(x,y)}{\rm ess\,sup}\sfd(x,y),\quad
\text{ for every }\mu_0,\mu_1\in\mathscr P_\infty(\X).
\end{equation}
It is well-known (see, \emph{e.g.}, \cite{GivSho84,ChaDeP08}) that $W_\infty$ is the monotone limit of the $q$-Wasserstein distances:
\begin{equation}\label{eq:lim_Wq}
W_\infty(\mu_0,\mu_1)=\lim_{q\to\infty}W_q(\mu_0,\mu_1),
\quad\text{ for every }\mu_0,\mu_1\in\mathscr P_\infty(\X).
\end{equation}
We denote by \({\rm Opt}_q(\mu_0,\mu_1)\) the set of all optimal
plans between \(\mu_0\) and \(\mu_1\), \emph{i.e.}, of all minimisers
of \eqref{eq:def_Wq} or \eqref{eq:def_Winfty}.
By using the direct method in the Calculus of Variations, one can
readily show that \({\rm Opt}_q(\mu_0,\mu_1)\neq\emptyset\) for
every \(q\in[1,\infty]\) and \(\mu_0,\mu_1\in\mathscr P_q(\X)\).
We call $(W_q,\mathscr P_q(\X))$ the $q$-Wasserstein space and
recall that it is complete if $(\X,\sfd)$ is a complete metric space.
\medskip

In the sequel, we shall also need the dynamical formulation of the
Optimal Transport problem.
Recall that, for our main proof, we need to perform several (polygonal) interpolations between probability measures via Wasserstein geodesics. This dynamical viewpoint was first discovered by R.\ McCann \cite{McCann97}, where he derived the notion of \emph{displacement interpolations} in the Euclidean space. Here instead, we shall work on the more general setting of Polish geodesic spaces and consider \emph{optimal dynamical plans} \cite[Theorem 2.10]{AmbrosioGigli11} (whose proof is inspired by \cite{Lisini07} and extends the previous works \cite{Lott-Villani09,Villani09} for compact and locally compact spaces).
Before passing to its description, we
have to fix some further terminology. By a geodesic in \((\X,\sfd)\)
we mean a curve \(\gamma\in C([0,1],\X)\) satisfying
\(\sfd(\gamma_t,\gamma_s)=|t-s|\,\sfd(\gamma_0,\gamma_1)\)
for every \(t,s\in[0,1]\). Thanks to the triangle inequality,
this condition is equivalent to requiring that
\(\sfd(\gamma_t,\gamma_s)\leq|t-s|\,\sfd(\gamma_0,\gamma_1)\) for every
\(t,s\in[0,1]\). We denote by \({\rm Geo}(\X)\) the space of all
geodesics in \(C([0,1],\X)\). It is easy to show that \({\rm Geo}(\X)\)
is a closed subset of \(\big(C([0,1],\X),\sfd_{\rm sup}\big)\).

Given an exponent \(q\in[1,\infty]\) and measures
\(\mu_0,\mu_1\in\mathscr P_q(\X)\), we define
\[
{\rm OptGeo}_q(\mu_0,\mu_1)\coloneqq\Big\{
\ppi\in\mathscr P\big({\rm Geo}(\X)\big)\,:\,
(\e_i)_\#\ppi=\mu_i\;\forall i=0,1,\,
(\e_0,\e_1)_\#\ppi\in{\rm Opt}_q(\mu_0,\mu_1)\Big\}.
\]
The elements of \({\rm OptGeo}_q(\mu_0,\mu_1)\) are called
\(q\)-optimal dynamical plans between \(\mu_0\) and \(\mu_1\).
\begin{remark}{\rm
We point out that if \(q\in[1,\infty]\), \(\mu_0,\mu_1\in
\mathscr P_q(\X)\), and \(\ppi\in{\rm OptGeo}_q(\mu_0,\mu_1)\), then
it holds that \(t\mapsto(\e_t)_\#\ppi\) is a geodesic curve in the
metric space \(\big(\mathscr P_q(\X),W_q\big)\) and that
\begin{subequations}\begin{align}\label{eq:dynOPq}
{\rm Ke}_q(\ppi)=W_q^q(\mu_0,\mu_1),&\quad\text{ if }q<\infty,\\
\label{eq:dynOPinfty}
{\rm Lip}(\ppi)=W_\infty(\mu_0,\mu_1),&\quad\text{ if }q=\infty.
\end{align}\end{subequations}
The case \(q<\infty\) is well-known. Concerning the case \(q=\infty\):
for \(s,t\in[0,1]\) with \(s\leq t\) it holds
\[
W_\infty\big((\e_s)_\#\ppi,(\e_t)_\#\ppi\big)
\leq\underset{\sppi\text{-a.e.\ }\gamma}{\rm ess\,sup}
\,\sfd(\gamma_s,\gamma_t)=(t-s)\,\underset{\sppi\text{-a.e.\ }\gamma}
{\rm ess\,sup}\,\sfd(\gamma_0,\gamma_1)=(t-s)W_\infty(\mu_0,\mu_1),
\]
where in the first inequality we used the fact that
\((\e_s,\e_t)_\#\ppi\in{\rm Adm}\big((\e_s)_\#\ppi,(\e_t)_\#\ppi\big)\).
It follows that \([0,1]\ni t\mapsto(\e_t)_\#\ppi
\in\mathscr P_\infty(\X)\) is a \(W_\infty\)-geodesic.
By plugging \(s=0\) and \(t=1\) in the previous estimates,
we obtain the identity in \eqref{eq:dynOPinfty}.
\fr}\end{remark}
\begin{proposition}\label{prop:existence_optgeo_infty}
Let \((\X,\sfd,\mm)\) be a metric measure space.
Let \(q_n\nearrow\infty\) be a given sequence.
Fix any \(\mu_0,\mu_1\in\mathscr P_\infty(\X)\) and suppose there
exists a sequence \((\ppi_n)_{n\in\N}\subseteq\Pi_\infty(\X)\) of
\(\infty\)-test plans \(\ppi_n\in{\rm OptGeo}_{q_n}(\mu_0,\mu_1)\) such
that \(\sup_{n\in\N}{\rm Comp}(\ppi_n)<+\infty\).
Then there exists a \(\infty\)-test plan
\(\ppi\in{\rm OptGeo}_\infty(\mu_0,\mu_1)\) such that
\(\ppi_{n_i}\rightharpoonup\ppi\) as \(i\to\infty\)
for some subsequence \((\ppi_{n_i})_{i\in\N}\).
\end{proposition}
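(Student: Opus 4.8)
The plan is to obtain $\ppi$ as a narrow sublimit of the $\ppi_n$ through the compactness Proposition \ref{prop:compactness_tp}, and then to verify its three defining properties by combining Portmanteau-type arguments with the monotone convergence $W_{q_n}\to W_\infty$ of \eqref{eq:lim_Wq} and the elementary identity \eqref{eq:continuity_Lq_norm}. First I would check the hypotheses of Proposition \ref{prop:compactness_tp}. Since $\mu_0,\mu_1\in\mathscr P_\infty(\X)$, there exist $\bar x\in\X$ and radii $R_0,R_1>0$ with $\mu_i\big(\bar B_{R_i}(\bar x)\big)=1$, whence ${\rm spt}(\mu_i)\subseteq\bar B_{R_i}(\bar x)$; in particular $\bigcup_{n\in\N}{\rm spt}\big((\e_0)_\#\ppi_n\big)={\rm spt}(\mu_0)$ is bounded. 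Moreover, $\ppi_n$ is concentrated on ${\rm Geo}(\X)$ with marginals $\mu_0,\mu_1$, so $\ppi_n$-a.e.\ curve $\gamma$ is a geodesic with $\gamma_0\in\bar B_{R_0}(\bar x)$ and $\gamma_1\in\bar B_{R_1}(\bar x)$; hence $|\dot\gamma_t|=\sfd(\gamma_0,\gamma_1)\leq R_0+R_1$ for $\mathcal L_1$-a.e.\ $t$, so that ${\rm Lip}(\ppi_n)\leq R_0+R_1$ for every $n$. Together with the assumption $\sup_n{\rm Comp}(\ppi_n)<+\infty$, Proposition \ref{prop:compactness_tp} then supplies a subsequence $(\ppi_{n_i})_{i\in\N}$ narrowly converging to a $\infty$-test plan $\ppi$.

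Next I would identify $\ppi$ as an element of ${\rm OptGeo}_\infty(\mu_0,\mu_1)$. Since ${\rm Geo}(\X)$ is closed in $C([0,1],\X)$ and each $\ppi_{n_i}$ is concentrated on it, the Portmanteau theorem gives $\ppi\big({\rm Geo}(\X)\big)\geq\lims_i\ppi_{n_i}\big({\rm Geo}(\X)\big)=1$, so $\ppi$ is concentrated on geodesics. The narrow continuity of $\ppi\mapsto(\e_t)_\#\ppi$ (Remark \ref{rmk:cont_eval_meas}) yields $(\e_i)_\#\ppi=\mu_i$ for $i=0,1$, so that $\alpha\coloneqq(\e_0,\e_1)_\#\ppi\in{\rm Adm}(\mu_0,\mu_1)$; also $\alpha_{n_i}\coloneqq(\e_0,\e_1)_\#\ppi_{n_i}\rightharpoonup\alpha$, because pushforward along the continuous map $(\e_0,\e_1)$ is narrowly continuous. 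The only remaining point is that $\alpha\in{\rm Opt}_\infty(\mu_0,\mu_1)$.

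This last point is the core of the proof, and the step I expect to be the main obstacle: one cannot simply bound $\|\sfd\|_{L^\infty(\alpha_{n_i})}$ by $W_\infty(\mu_0,\mu_1)$, since $\alpha_{n_i}$ is only $q_{n_i}$-optimal — a $q$-optimal coupling may move mass further, in the essential-supremum sense, than an $\infty$-optimal one — so the limit has to be taken through the finite exponents. Concretely, fix $q\in(1,\infty)$; for every $i$ with $q_{n_i}\geq q$, the $q_{n_i}$-optimality of $\alpha_{n_i}$, Hölder's inequality for the probability measure $\alpha_{n_i}$, and \eqref{eq:lim_Wq} give
\[
\|\sfd\|_{L^q(\alpha_{n_i})}\leq\|\sfd\|_{L^{q_{n_i}}(\alpha_{n_i})}=W_{q_{n_i}}(\mu_0,\mu_1)\leq W_\infty(\mu_0,\mu_1).
\]
As $\sfd^q$ is nonnegative and continuous, $\beta\mapsto\int\sfd^q\,\d\beta$ is narrowly lower semicontinuous (being the supremum over $k\in\N$ of the narrowly continuous functionals $\beta\mapsto\int(\sfd^q\wedge k)\,\d\beta$), hence $\int\sfd^q\,\d\alpha\leq\limi_i\int\sfd^q\,\d\alpha_{n_i}\leq W_\infty(\mu_0,\mu_1)^q$, i.e.\ $\|\sfd\|_{L^q(\alpha)}\leq W_\infty(\mu_0,\mu_1)$ for every $q\in(1,\infty)$. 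Letting $q\to\infty$ and using \eqref{eq:continuity_Lq_norm} with $\mu\coloneqq\alpha$ and $f\coloneqq\sfd$, one gets $\|\sfd\|_{L^\infty(\alpha)}\leq W_\infty(\mu_0,\mu_1)$; the converse inequality is immediate from $\alpha\in{\rm Adm}(\mu_0,\mu_1)$ and the definition \eqref{eq:def_Winfty}. Thus $\alpha\in{\rm Opt}_\infty(\mu_0,\mu_1)$, and therefore $\ppi$ is a $\infty$-test plan belonging to ${\rm OptGeo}_\infty(\mu_0,\mu_1)$, as required. Everything else — Portmanteau, narrow continuity of evaluations and pushforwards, and lower semicontinuity of the Wasserstein-type functionals — is routine.
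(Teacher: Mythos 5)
Your proof is correct and follows the same route as the paper: the compactness of Proposition \ref{prop:compactness_tp} (with the same diameter bound on ${\rm Lip}(\ppi_n)$ coming from the geodesic structure and the bounded supports), Portmanteau for the closed set ${\rm Geo}(\X)$, narrow continuity of the evaluation maps to identify the marginals, and then passage of the optimality through the finite exponents. The only, immaterial, difference is in that last step: the paper bounds ${\rm ess\,sup}_{\alpha}\,\sfd$ by ${\rm Lip}(\ppi)$ and invokes the lower semicontinuity \eqref{eq:lsc_Ke} of Proposition \ref{prop:Mosco_conv_tp} at the level of the dynamical plans, whereas you run the same monotonicity-plus-lower-semicontinuity argument directly on the static couplings $\alpha_{n_i}=(\e_0,\e_1)_\#\ppi_{n_i}$ and then send $q\to\infty$ via \eqref{eq:continuity_Lq_norm}; both are equally valid.
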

\begin{proof}
First, notice that \({\rm spt}(\mu_0)=\bigcup_{n\in\N}
{\rm spt}\big((\e_0)_\#\ppi_n\big)\) is bounded. Moreover,
calling \(D\) the diameter of \({\rm spt}(\mu_0)\cup{\rm spt}(\mu_1)\),
we claim that \(\sup_{n\in\N}{\rm Lip}(\ppi_n)\leq D\). Indeed, given
any \(n\in\N\), we know that \(\ppi_n\)-a.e.\ curve \(\gamma\) is a
geodesic satisfying \(\gamma_0\in{\rm spt}(\mu_0)\) and
\(\gamma_1\in{\rm spt}(\mu_1)\), so that accordingly
\({\rm Lip}(\gamma)=\sfd(\gamma_0,\gamma_1)\leq D\). Therefore,
since \(\sup_{n\in\N}{\rm Comp}(\ppi_n)<+\infty\) by assumption,
we are in a position to apply Proposition \ref{prop:compactness_tp},
thus obtaining that there exists a \(\infty\)-test plan \(\ppi\)
on \((\X,\sfd,\mm)\) such that \(\ppi_n\rightharpoonup\ppi\),
up to a not relabelled subsequence in \(n\). By applying Remark
\ref{rmk:cont_eval_meas}, we see that \((\e_0)_\#\ppi=\mu_0\)
and \((\e_1)_\#\ppi=\mu_1\).
Being \({\rm Geo}(\X)\) a closed subset of \(C([0,1],\X)\),
one has \(\ppi({\rm Geo}(\X)^c)\leq\limi_n\ppi_n({\rm Geo}(\X)^c)=0\),
which shows that \(\ppi\) is concentrated on \({\rm Geo}(\X)\).
In order to prove that \(\ppi\in{\rm OptGeo}_\infty(\mu_0,\mu_1)\),
it only remains to show that
\(\alpha\coloneqq(\e_0,\e_1)_\#\ppi\in{\rm Opt}_\infty(\mu_0,\mu_1)\).
This readily follows from the estimates
\[\begin{split}
\underset{\alpha\text{-a.e.\ }(x,y)}{\rm ess\,sup}\sfd(x,y)
&\overset{\phantom{\eqref{eq:dynOPq}}}=\underset
{\sppi\text{-a.e.\ }\gamma}{\rm ess\,sup}\,\sfd(\gamma_0,\gamma_1)
={\rm Lip}(\ppi)\overset{\eqref{eq:lsc_Ke}}\leq
\lim_{n\to\infty}{\rm Ke}_{q_n}^{1/q_n}(\ppi_n)\\
&\overset{\eqref{eq:dynOPq}}=\lim_{n\to\infty}W_{q_n}(\mu_0,\mu_1)
\overset{\eqref{eq:lim_Wq}}=W_\infty(\mu_0,\mu_1).
\end{split}\]
Therefore, the statement is achieved.
\end{proof}
\subsection{The Curvature-Dimension condition}
Let us recall the basic theory of lower Ricci curvature bounds
for metric measure spaces which was introduced by Lott--Villani
\cite{Lott-Villani09} and Sturm \cite{Sturm06I,Sturm06II}, the
so-called Curvature-Dimension condition \({\sf CD}(K,N)\), where
\(K\) and \(N\) stand for the lower Ricci curvature bound and the
upper dimension bound, respectively.
\medskip

To begin with, we recall the definition of the volume distortion
coefficients \(\tau_{K,N}^{(t)}\). Given any \(K\in\R\),
\(N\in(1,\infty)\), \(t\in[0,1]\), and \(\theta\in[0,+\infty)\),
we define
\[\begin{split}
\tau_{K,N}^{(t)}(\theta)&\coloneqq\left\{\begin{array}{ll}
t^{1/N}\bigg(\frac{\sin\big(t\theta\sqrt{K/(N-1)}\big)}
{\sin\big(\theta\sqrt{K/(N-1)}\big)}\bigg)^{(N-1)/N},
\quad&\text{ if }K>0\text{ and }K\theta^2\leq(N-1)\pi^2,\\
+\infty,\quad&\text{ if }K>0\text{ and }K\theta^2>(N-1)\pi^2,\\
t,\quad&\text{ if }K=0,\\
t^{1/N}\bigg(\frac{\sinh\big(t\theta\sqrt{-K/(N-1)}\big)}
{\sinh\big(\theta\sqrt{-K/(N-1)}\big)}\bigg)^{(N-1)/N},
\quad&\text{ if }K>0.
\end{array}\right.
\end{split}\]

Given a metric measure space \((\X,\sfd,\mm)\) and any coefficient
\(N\in(1,\infty)\), following \cite{Sturm06II} we define the
\(N\)-R\'{e}nyi relative entropy functional
\(\mathcal E_N\colon\mathscr P(\X)\to[0,+\infty]\) as
\[
\mathcal E_N(\mu)\coloneqq\int\rho^{1-\frac{1}{N}}\,\d\mm,
\quad\text{ for every }\mu\in\mathscr P(\X),\;\mu=\rho\mm+\mu^s
\text{ with }\mu^s\perp\mm.
\]
We are now in a position to enunciate the definition of a
\({\sf CD}_q(K,N)\) space.
\begin{definition}[\({\sf CD}_q(K,N)\) space]\label{def:CDq}
Let \((\X,\sfd,\mm)\) be a metric measure space. Let \(q\in(1,\infty)\)
be given. Then \((\X,\sfd,\mm)\) is said to be a \({\sf CD}_q(K,N)\)
space, for some \(K\in\R\) and \(N\in(1,\infty)\), provided the
following property holds: given any \(\mu_0,\mu_1\in\mathscr P_q(\X)\),
there exists a \(q\)-optimal dynamical plan
\(\ppi\in{\rm OptGeo}_q(\mu_0,\mu_1)\) such that
\(\mu_t\coloneqq(\e_t)_\#\ppi\ll\mm\) for every \(t\in[0,1]\) and
\[
\mathcal E_{N'}(\mu_t)\geq\int\rho_0(\gamma_0)^{-\frac{1}{N'}}
\tau_{K,N'}^{(1-t)}\big(\sfd(\gamma_0,\gamma_1)\big)+
\rho_1(\gamma_1)^{-\frac{1}{N'}}
\tau_{K,N'}^{(t)}\big(\sfd(\gamma_0,\gamma_1)\big)\,\d\ppi(\gamma),
\]
for every \(N'\geq N\) and \(t\in[0,1]\), where we denote
\(\mu_0=\rho_0\mm\) and \(\mu_1=\rho_1\mm\). When \(q=2\),
we just write \({\sf CD}(K,N)\) in place of \({\sf CD}_2(K,N)\).
\end{definition}
Recall that a metric space \((\X,\sfd)\) is said to be non-branching
provided the implication
\begin{equation}
(\gamma_0,\gamma_t)=(\sigma_0,\sigma_t),\,\text{ for some }t\in(0,1)
\quad\Longrightarrow\quad\gamma=\sigma,
\label{eq:nonbranching}
\end{equation}
holds for every \(\gamma,\sigma\in{\rm Geo}(\X)\).
In the non-branching \({\sf CD}_q(K,N)\) setting the following property
holds: if \(\mu_0,\mu_1\in\mathscr P_q(\X)\) and \(\mu_0\ll\mm\),
then \({\rm OptGeo}_q(\mu_0,\mu_1)\) contains a unique element (see, \emph{e.g.}, \cite[Proposition 2.6]{AmbrosioGigli11} for $q=2$, but the proof goes along the same line for general $q \in (1,\infty)$).
The following result can be obtained by arguing as in
\cite[Proposition 4.2 iv)]{Sturm06II} or as in the proof
of \cite[Theorem 30.32]{Villani09}.
\begin{proposition}\label{prop:bounds_for_OptGeo}
Let \((\X,\sfd,\mm)\) be a non-branching \({\sf CD}_q(K,N)\) space,
for some \(K\in\R\), \(N\in(1,\infty)\), and \(q\in(1,\infty)\). Let
\(\mu_0,\mu_1\in\mathscr P_q(\X)\) be such that \(\mu_0,\mu_1\ll\mm\).
Denote \(\mu_0=\rho_0\mm\) and \(\mu_1=\rho_1\mm\). Let \(\ppi\)
be the unique element of \({\rm OptGeo}_q(\mu_0,\mu_1)\). Then
\(\mu_t\coloneqq(\e_t)_\#\ppi\ll\mm\) for every \(t\in[0,1]\).
Moreover, for every \(t\in[0,1]\) it holds that
\begin{equation}
\rho_t(\gamma_t)^{-\frac{1}{N}}\geq\rho_0(\gamma_0)^{-\frac{1}{N}}
\tau_{K,N}^{(1-t)}\big(\sfd(\gamma_0,\gamma_1)\big)+
\rho_1(\gamma_1)^{-\frac{1}{N}}
\tau_{K,N}^{(t)}\big(\sfd(\gamma_0,\gamma_1)\big),
\quad\text{ for }\ppi\text{-a.e.\ }\gamma,
\end{equation}
where we set \(\mu_t=\rho_t\mm\).
\end{proposition}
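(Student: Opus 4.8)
The plan is to run the classical \emph{restriction-and-localisation} argument, essentially as in \cite[Proposition 4.2]{Sturm06II} or \cite[Theorem 30.32]{Villani09}. Its engine is the following localisation principle. Since \(\mu_0\ll\mm\), the plan \(\ppi\) is the \emph{unique} \(q\)-optimal dynamical plan between \(\mu_0\) and \(\mu_1\); moreover, for any Borel set \(\Gamma\subseteq{\rm Geo}(\X)\) with \(\ppi(\Gamma)>0\), the normalised restriction \(\ppi_\Gamma\coloneqq\ppi(\Gamma)^{-1}\ppi|_\Gamma\) is again a \(q\)-optimal dynamical plan between its own marginals (restrictions of optimal plans are optimal, and concentration on \({\rm Geo}(\X)\) is preserved), and its initial marginal is dominated by \(\ppi(\Gamma)^{-1}\mu_0\ll\mm\). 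Hence, by the uniqueness recalled before the statement, \(\ppi_\Gamma\) coincides with the \(q\)-optimal dynamical plan furnished by the \({\sf CD}_q(K,N)\) condition between \((\e_0)_\#\ppi_\Gamma\) and \((\e_1)_\#\ppi_\Gamma\), so that \(\ppi_\Gamma\) itself satisfies the entropy inequality of Definition \ref{def:CDq}. I shall apply this with \(\Gamma\) a preimage \(\e_t^{-1}(B)\) under the evaluation at an interior time \(t\in(0,1)\) (the cases \(t\in\{0,1\}\) being the hypothesis).

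\emph{Absolute continuity of \(\mu_t\).} Suppose, for contradiction, that \(\mu_t\) has a non-trivial singular part, i.e.\ that \(\mm(E)=0\) and \(\mu_t(E)>0\) for some Borel \(E\subseteq\X\), and apply the localisation principle with \(\Gamma\coloneqq\e_t^{-1}(E)\). Then \((\e_t)_\#\ppi_\Gamma=\mu_t(E)^{-1}\mu_t|_E\) is purely singular with respect to \(\mm\), so \(\mathcal E_{N'}\big((\e_t)_\#\ppi_\Gamma\big)=0\) for every \(N'\ge N\). On the other hand the entropy inequality for \(\ppi_\Gamma\) bounds this quantity below by the \(\ppi_\Gamma\)-integral of \(\rho_0^{\Gamma}(\gamma_0)^{-1/N'}\tau^{(1-t)}_{K,N'}\big(\sfd(\gamma_0,\gamma_1)\big)+\rho_1^{\Gamma}(\gamma_1)^{-1/N'}\tau^{(t)}_{K,N'}\big(\sfd(\gamma_0,\gamma_1)\big)\), where \(\rho_i^\Gamma\) denotes the density of \((\e_i)_\#\ppi_\Gamma\) (both \((\e_0)_\#\ppi_\Gamma\) and \((\e_1)_\#\ppi_\Gamma\) are absolutely continuous, being dominated by \(\mu_0\) and \(\mu_1\)). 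Since these densities are \(\ppi_\Gamma\)-a.e.\ finite and positive along \(\e_i\), and since the distortion coefficients \(\tau^{(s)}_{K,N'}(\theta)\) are strictly positive for all \(\theta\ge 0\) and \(s\in(0,1)\), the integrand is strictly positive \(\ppi_\Gamma\)-a.e., so its integral is \(>0\) — a contradiction. Hence \(\mu_t\ll\mm\), and we write \(\mu_t=\rho_t\mm\).

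\emph{The pointwise inequality.} Fix \(t\in(0,1)\) and an arbitrary Borel set \(B\subseteq\X\) with \(\mu_t(B)>0\), and put \(\Gamma\coloneqq\e_t^{-1}(B)\). Now \((\e_t)_\#\ppi_\Gamma=\mu_t(B)^{-1}\mu_t|_B\) has density \(\mu_t(B)^{-1}\rho_t\1_B\), and a direct computation yields \(\mathcal E_{N'}\big((\e_t)_\#\ppi_\Gamma\big)=\mu_t(B)^{1/N'-1}\int_{\e_t^{-1}(B)}\rho_t(\gamma_t)^{-1/N'}\,\d\ppi(\gamma)\). Moreover \((\e_i)_\#\ppi_\Gamma\le\mu_t(B)^{-1}\mu_i\) for \(i=0,1\), whence \(\rho_i^\Gamma\le\mu_t(B)^{-1}\rho_i\) and therefore \(\rho_i^\Gamma(\gamma_i)^{-1/N'}\ge\mu_t(B)^{1/N'}\rho_i(\gamma_i)^{-1/N'}\) for \(\ppi_\Gamma\)-a.e.\ \(\gamma\); feeding this into the right-hand side of the entropy inequality for \(\ppi_\Gamma\), the common factor \(\mu_t(B)^{1/N'-1}>0\) cancels and one is left with
\[
\int_{\e_t^{-1}(B)}\rho_t(\gamma_t)^{-1/N'}\,\d\ppi(\gamma)\ \ge\ \int_{\e_t^{-1}(B)}\Big(\rho_0(\gamma_0)^{-1/N'}\tau^{(1-t)}_{K,N'}\big(\sfd(\gamma_0,\gamma_1)\big)+\rho_1(\gamma_1)^{-1/N'}\tau^{(t)}_{K,N'}\big(\sfd(\gamma_0,\gamma_1)\big)\Big)\,\d\ppi(\gamma).
\]
As \(B\) is arbitrary, writing \(h_{N'}(\gamma)\) for the difference of the two integrands this says \(\int_{\e_t^{-1}(B)}h_{N'}\,\d\ppi\ge 0\) for every Borel \(B\). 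I then invoke the standard fact — valid in non-branching spaces and contained in the quoted references — that the unique \(q\)-optimal dynamical plan \(\ppi\) between absolutely continuous measures is induced by a map from the evaluation at the interior time \(t\) (equivalently, \(\gamma_t\) determines the geodesic \(\gamma\) for \(\ppi\)-a.e.\ \(\gamma\)); consequently \(\gamma_0\), \(\gamma_1\) and hence \(h_{N'}(\gamma)\) are \(\ppi\)-a.e.\ functions of \(\gamma_t\), and the family of integral inequalities upgrades to \(h_{N'}\ge 0\) \(\ppi\)-a.e. Finally, taking \(N'_k\searrow N\) with \(N'_k\ge N\) and using that \(\rho_0(\gamma_0),\rho_1(\gamma_1),\rho_t(\gamma_t)\in(0,+\infty)\) \(\ppi\)-a.e.\ (as \(\mu_0,\mu_1,\mu_t\ll\mm\)) together with the continuity of the distortion coefficients in the dimensional parameter, one gets \(h_{N'_k}(\gamma)\to h_N(\gamma)\) for \(\ppi\)-a.e.\ \(\gamma\), so that \(h_N\ge 0\) \(\ppi\)-a.e. — which is precisely the asserted inequality.

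\emph{Main obstacle.} The delicate point — and the place where non-branching is genuinely used — is the passage from the \emph{integral} inequalities over the sets \(\e_t^{-1}(B)\) to the \emph{pointwise} inequality, which requires that the integrand be a function of \(\gamma_t\); this is the map structure of \(\ppi\), which itself rests on the uniqueness of optimal dynamical plans in the non-branching setting, and that same uniqueness is what makes the \({\sf CD}_q(K,N)\) inequality applicable to every restriction \(\ppi_\Gamma\) in the first place. One must also be careful with the bookkeeping: exactly one of the marginals of \(\ppi_\Gamma\) restricts ``cleanly'' (namely \((\e_t)_\#\ppi_\Gamma\)), while the others are controlled only by a domination bound, and it is this asymmetry that makes the scale factors \(\mu_t(B)^{1/N'-1}\) cancel in the right direction.
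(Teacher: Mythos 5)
Your proof is correct and is exactly the restriction-and-localisation argument from \cite[Proposition 4.2]{Sturm06II} and \cite[Theorem 30.32]{Villani09} that the paper itself invokes (without reproducing it) for this proposition: restrict \(\ppi\) to \(\e_t^{-1}(B)\), use non-branching uniqueness to see that the normalised restriction is the \({\sf CD}_q\)-plan between its own marginals, and upgrade the resulting integral inequalities to a pointwise one via the \(\ppi\)-a.e.\ injectivity of \(\e_t\) at interior times. The only superfluous step is the final limit \(N'_k\searrow N\): Definition \ref{def:CDq} already allows \(N'=N\), so you may apply the entropy inequality directly at that value.
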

We also recall the following important result, which was proved
in \cite[Theorem 1.1]{ACCMCS20}. It states that
on non-branching spaces (whose reference measure is finite), the
\({\sf CD}_q(K,N)\) condition is in fact independent of \(q\).
\begin{theorem}[Equivalence of \({\sf CD}_q\) on \(q>1\)]
\label{thm:equiv_CD_q}
Let \((\X,\sfd,\mm)\) be a non-branching \({\sf CD}(K,N)\) space,
for some \(K\in\R\) and \(N\in(1,\infty)\). Suppose that the measure
\(\mm\) is finite. Then \((\X,\sfd,\mm)\) is a \({\sf CD}_q(K,N)\)
space for every \(q\in(1,\infty)\).
\end{theorem}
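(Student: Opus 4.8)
The goal is to prove Theorem~\ref{thm:equiv_CD_q}: on a non-branching \({\sf CD}(K,N)\) space \((\X,\sfd,\mm)\) with \(\mm(\X)<+\infty\), the condition \({\sf CD}_q(K,N)\) holds for every \(q\in(1,\infty)\). This is exactly \cite[Theorem~1.1]{ACCMCS20}, so the plan is to indicate how one reduces the general statement to the tools assembled in the preliminaries and to the structure of that reference. I would not reprove the deep parts of \cite{ACCMCS20} from scratch; instead I would organise the argument around the characterisation of \({\sf CD}_q(K,N)\) via distortion of densities along \(q\)-optimal dynamical plans.

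First I would fix \(q\in(1,\infty)\) and two measures \(\mu_0,\mu_1\in\mathscr P_q(\X)\). By an approximation argument (truncating supports and densities, using \(\mm(\X)<+\infty\) so that bounded sets have finite mass and compactly-supported bounded densities are dense) it suffices to treat the case \(\mu_0,\mu_1\ll\mm\) with bounded densities and bounded supports; the general inequality then follows by lower semicontinuity of \(\mathcal E_{N'}\) and stability of optimal dynamical plans under narrow convergence (the machinery of Proposition~\ref{prop:existence_optgeo_infty} and Remark~\ref{rmk:cont_eval_meas}). In this reduced setting, by non-branching there is a \emph{unique} \(\ppi_q\in{\rm OptGeo}_q(\mu_0,\mu_1)\) and a unique \(\ppi_2\in{\rm OptGeo}_2(\mu_0,\mu_1)\). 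The heart of the matter is to show that these two plans carry the same geometric information, i.e.\ that the \({\sf CD}(K,N)=\mathsf{CD}_2(K,N)\) inequality along \(\ppi_2\) transfers to the required inequality along \(\ppi_q\).

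The key step — and the one I expect to be the main obstacle — is a \emph{change of parametrisation / measure decomposition} argument: on a non-branching space, an optimal geodesic plan disintegrates along the transport set into one-dimensional geodesics, and the density bound of Proposition~\ref{prop:bounds_for_OptGeo} for \(\ppi_2\) is really a pointwise statement along each geodesic that depends only on the geodesic itself, not on the exponent used to select the plan. Concretely, one shows that if \((x,y)\) lies in the support of an optimal plan for some cost \(\sfd^q\), the same pair is "monotone" in the sense needed for \(\sfd^2\)-optimality along the corresponding geodesic, because cyclical monotonicity for strictly convex costs \(h(\sfd)\) with \(h(r)=r^q\) is governed by the same ordering of geodesics when the space is non-branching and \(\mu_0\ll\mm\). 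This is precisely where the finiteness of \(\mm\) is used in \cite{ACCMCS20} (to run a selection/measurable-parametrisation argument globally), and reproducing it is the technically demanding portion. Granting this, the distortion inequality along \(\ppi_q\) is obtained by applying Proposition~\ref{prop:bounds_for_OptGeo} to the \(2\)-optimal plan supported on the same family of geodesics and rewriting the entropy bound.

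Finally, I would assemble the pieces: from the pointwise density lower bound along \(\ppi_q\) one integrates against \(\mm\) (using Jensen's inequality in the form that turns \(\rho_t(\gamma_t)^{-1/N'}\) bounds into \(\mathcal E_{N'}(\mu_t)\) bounds, exactly as in \cite[Proposition~4.2]{Sturm06II} or \cite[Theorem~30.32]{Villani09}) to recover the defining inequality of Definition~\ref{def:CDq} for the exponent \(q\), for every \(N'\ge N\) and \(t\in[0,1]\). Removing the reductions via lower semicontinuity completes the proof. In short: the routine parts are the approximation and the entropy integration; the real content, which I would cite from \cite[Theorem~1.1]{ACCMCS20} rather than redo, is that in the non-branching finite-mass setting the optimal dynamical plan — and hence the distortion estimate — does not depend on the transport exponent.
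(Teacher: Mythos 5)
The paper gives no proof of this statement: it is recalled verbatim as \cite[Theorem 1.1]{ACCMCS20}, which is exactly the reference you identify and defer to. Your proposal therefore takes the same route as the paper (citation of the equivalence result), and the additional sketch of the internal mechanics of that reference, while not needed here, does not affect the correctness of the appeal.
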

The above results in fact hold under a weaker assumption,
called \(q\)-essential non-branching; see \cite{RajalaSturm12}
for the definition of such condition.
\section{Existence of master families of test plans}
In this section, we are concerned with the existence problem
of suitable \emph{master families} for the space of functions
having bounded variation. Roughly, by a master family
for BV we mean a family of \(\infty\)-test plans
on a given metric measure space which is capable of detecting
all BV functions and their total variation measures.
The precise definition is the following:
\begin{definition}[Master family for \(\rm BV\)]
Let \((\X,\sfd,\mm)\) be a metric measure space. Then a given
family \(\Pi\) of \(\infty\)-test plans on \((\X,\sfd,\mm)\)
is said to be a master family for \({\rm BV}(\X)\) provided
\[
{\rm BV}_\Pi(\X)={\rm BV}(\X),\quad
|{\boldsymbol D}f|_\Pi=|{\boldsymbol D}f|\,\text{ for every }
f\in{\rm BV}(\X).
\]
\end{definition}
It is worth pointing out that that the totality \(\Pi_\infty(\X)\)
of \(\infty\)-test plans on \((\X,\sfd,\mm)\) is, by definition,
a master family for \({\rm BV}(\X)\). However, it is not clear
\emph{a priori} whether any strictly smaller family of \(\infty\)-test
plans can be a master family for BV.
\medskip

The aim of this section is twofold: to show that in the setting
on non-branching \({\sf CD}(K,N)\) spaces, the set of \(\infty\)-test
plans concentrated on geodesics is a master family for BV; to prove
that on an arbitrary metric measure space, it is possible to find a
countable master family for BV. We will achieve these goals in
Subsections \ref{ss:master_geod_CD} and \ref{ss:master_seq},
respectively.
\subsection{Master geodesic plans on non-branching
\texorpdfstring{\(\sf CD\)}{CD} spaces}\label{ss:master_geod_CD}
Here we prove that on non-branching \({\sf CD}(K,N)\) spaces
(with finite reference measure) the \(\infty\)-test plans concentrated
on geodesics form a master family for BV; in fact, we prove a stronger
result, namely that those \(\infty\)-test plans which are
\(\infty\)-optimal dynamical plans between their marginals
form a master family for BV. As described in the Introduction,
the first step is to prove existence of interpolating
\(\infty\)-optimal geodesic plans having `well-behaved'
compression constants.
\begin{theorem}[Existence of `good' \(\infty\)-optimal dynamical plans]
\label{thm:CD_are_BIP}
Let \((\X,\sfd,\mm)\) be a non-branching \({\sf CD}(K,N)\) space,
with \(K\in\R\) and \(N<\infty\). Suppose the measure \(\mm\)
is finite. Then there exists a nondecreasing function
\(C\colon(0,+\infty)\to(0,+\infty)\), depending on
\(K,N\) and called the profile function of \((\X,\sfd,\mm)\),
such that \(C({\rm L})\to 1\) as \({\rm L}\to 0\) and with
the following property:  given any \(\infty\)-test plan
\(\eeta\in\Pi_\infty(\X)\) with bounded support, there exists a
\(\infty\)-optimal dynamical plan
\(\ppi\in{\rm OptGeo}_\infty\big((\e_0)_\#\eeta,(\e_1)_\#\eeta\big)\)
such that
\begin{equation}\label{eq:est_Comp}
{\rm Comp}(\ppi)\leq C\big({\rm Lip}(\eeta)\big){\rm Comp}(\eeta).
\end{equation}
Moreover, in the case \(K\geq 0\), the profile function can be
additionally required to satisfy \(C\leq 1\) on a right neighbourhood
of \(0\).
\end{theorem}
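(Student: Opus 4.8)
The plan is to produce, for each finite exponent $q\in(1,\infty)$, a $q$-optimal dynamical plan interpolating $\eeta$ with a compression estimate of the claimed form, and then to pass to the limit $q\to\infty$ using the compactness machinery of Proposition \ref{prop:compactness_tp} and Proposition \ref{prop:existence_optgeo_infty}. Concretely, set $\mu_i\coloneqq(\e_i)_\#\eeta$ for $i=0,1$; these are absolutely continuous with respect to $\mm$ (with densities bounded by ${\rm Comp}(\eeta)$), boundedly supported, hence lie in $\mathscr P_q(\X)$ for every $q$. By Theorem \ref{thm:equiv_CD_q} the space is $\mathsf{CD}_q(K,N)$ for every $q\in(1,\infty)$, so by the non-branching hypothesis there is a \emph{unique} $\ppi_q\in{\rm OptGeo}_q(\mu_0,\mu_1)$, and by Proposition \ref{prop:bounds_for_OptGeo} the interpolated measures $\mu_t^q=(\e_t)_\#\ppi_q=\rho_t^q\mm$ satisfy the pointwise distortion bound
\[
(\rho_t^q)(\gamma_t)^{-1/N}\ \geq\ \rho_0(\gamma_0)^{-1/N}\,\tau_{K,N}^{(1-t)}\!\big(\sfd(\gamma_0,\gamma_1)\big)+\rho_1(\gamma_1)^{-1/N}\,\tau_{K,N}^{(t)}\!\big(\sfd(\gamma_0,\gamma_1)\big)
\]
for $\ppi_q$-a.e.\ $\gamma$. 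Since $\rho_0,\rho_1\le{\rm Comp}(\eeta)$ and $\sfd(\gamma_0,\gamma_1)\le{\rm Lip}(\eeta)\eqqcolon{\rm L}$ along $\eeta$-a.e.\ (hence $\ppi_q$-a.e.) curve, one reads off
\[
\rho_t^q(\gamma_t)\ \leq\ {\rm Comp}(\eeta)\,\Big(\tau_{K,N}^{(1-t)}({\rm L})+\tau_{K,N}^{(t)}({\rm L})\Big)^{-N}\ \leq\ C({\rm L})\,{\rm Comp}(\eeta),
\]
where $C({\rm L})\coloneqq\sup_{t\in[0,1]}\big(\tau_{K,N}^{(1-t)}({\rm L})+\tau_{K,N}^{(t)}({\rm L})\big)^{-N}$ is a quantity depending only on $K,N,{\rm L}$, manifestly independent of $q$, nondecreasing in ${\rm L}$ (the $\tau$'s decrease as $\theta$ grows once $K\le 0$, and a short check handles $K>0$ with $K{\rm L}^2<(N-1)\pi^2$), and satisfying $C({\rm L})\to 1$ as ${\rm L}\to 0$ because $\tau_{K,N}^{(t)}(0)=t$ and $(1-t)+t=1$. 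This gives ${\rm Comp}(\ppi_q)\le C({\rm L}){\rm Comp}(\eeta)$ uniformly in $q$, and in particular each $\ppi_q$ is a genuine $\infty$-test plan (it is supported on geodesics of length $\le{\rm L}$, so ${\rm Lip}(\ppi_q)\le{\rm L}<\infty$).

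Having the uniform bound $\sup_q{\rm Comp}(\ppi_q)\le C({\rm L}){\rm Comp}(\eeta)$, together with $(\e_0)_\#\ppi_q=\mu_0$ boundedly supported and $\sup_q{\rm Lip}(\ppi_q)\le{\rm L}$, I would pick any sequence $q_n\nearrow\infty$ and invoke Proposition \ref{prop:existence_optgeo_infty}: along a subsequence $\ppi_{q_n}\rightharpoonup\ppi$ narrowly, with $\ppi\in{\rm OptGeo}_\infty(\mu_0,\mu_1)$. The compression estimate \eqref{eq:est_Comp} then passes to the limit by lower semicontinuity of the compression constant under narrow convergence (the second part of Remark \ref{rmk:cont_eval_meas}): ${\rm Comp}(\ppi)\le\liminf_n{\rm Comp}(\ppi_{q_n})\le C({\rm L}){\rm Comp}(\eeta)$. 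Since $(\e_i)_\#\ppi=(\e_i)_\#\eeta$ for $i=0,1$ by construction, this is exactly the asserted plan.

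For the refinement when $K\ge 0$: here $\tau_{K,N}^{(t)}(\theta)\ge t$ for all $\theta\ge 0$ (with equality when $K=0$; when $K>0$, $\sin(t\theta\sqrt{K/(N-1)})/\sin(\theta\sqrt{K/(N-1)})\ge t$ on the admissible range, by concavity of $\sin$ on $[0,\pi]$), so $\tau_{K,N}^{(1-t)}({\rm L})+\tau_{K,N}^{(t)}({\rm L})\ge 1$, giving $C({\rm L})\le 1$ outright, at least on the range $K{\rm L}^2<(N-1)\pi^2$, i.e.\ on a right neighbourhood of $0$; one simply redefines $C$ to be $\max\{C,1\}$ outside that neighbourhood, preserving monotonicity and the limit at $0$. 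The main obstacle I anticipate is the bookkeeping needed to extract the density bound cleanly: one must ensure the distortion inequality of Proposition \ref{prop:bounds_for_OptGeo} is used along the correct a.e.\ set of curves (recalling that $\ppi_q$ is the optimal plan between the \emph{same} marginals as $\eeta$, so the a.e.\ statements "$\gamma_0,\gamma_1$ in the support of $\mu_i$" and "$\sfd(\gamma_0,\gamma_1)\le{\rm L}$" transfer correctly), and to verify that $\sup_q{\rm Lip}(\ppi_q)\le{\rm L}$ despite $\eeta$ itself only satisfying ${\rm Lip}(\eeta)={\rm L}$ — this holds because geodesics between points at distance $\le{\rm L}$ have length $\le{\rm L}$. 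Beyond that, the monotonicity and asymptotics of $C$ are elementary properties of the distortion coefficients, and no deeper input than Theorems \ref{thm:equiv_CD_q} and the compactness propositions is required.
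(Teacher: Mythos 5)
The overall architecture is the paper's: pass through the unique $q$-optimal dynamical plans $\ppi_q$ granted by Theorem \ref{thm:equiv_CD_q} and Proposition \ref{prop:bounds_for_OptGeo}, extract a narrow sublimit in ${\rm OptGeo}_\infty$ via Proposition \ref{prop:existence_optgeo_infty}, and use lower semicontinuity of the compression constant. But there is a genuine gap at the key step where you claim the uniform bound ${\rm Comp}(\ppi_q)\le C({\rm L})\,{\rm Comp}(\eeta)$. You write that $\sfd(\gamma_0,\gamma_1)\le{\rm L}$ holds ``along $\eeta$-a.e.\ (hence $\ppi_q$-a.e.) curve'', and later assert that this statement ``transfers correctly'' because $\ppi_q$ has the same marginals as $\eeta$. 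It does not. The plans $\eeta$ and $\ppi_q$ share only the time-$0$ and time-$1$ \emph{marginals}; the joint laws of the endpoints are different couplings of $\mu_0$ and $\mu_1$, and $(\e_0,\e_1)_\#\ppi_q$ is the $d^q$-optimal one, which in general is mutually singular with $(\e_0,\e_1)_\#\eeta$. From the admissibility of $(\e_0,\e_1)_\#\eeta$ you only get the \emph{average} bound $W_q(\mu_0,\mu_1)\le{\rm L}$, i.e.\ $\int\sfd^q(\gamma_0,\gamma_1)\,\d\ppi_q\le{\rm L}^q$; this does not prevent the $q$-optimal coupling from moving a small amount of mass a distance larger than ${\rm L}$ (only as $q\to\infty$ does the essential-sup bound $W_\infty\le{\rm L}$ emerge). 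Consequently the distortion inequality only yields $\rho^q_t(\gamma_t)\le C\big(\sfd(\gamma_0,\gamma_1)\big)\,{\rm Comp}(\eeta)$ with $\sfd(\gamma_0,\gamma_1)$ controlled merely by the diameter $D$ of ${\rm spt}(\mu_0)\cup{\rm spt}(\mu_1)$, giving ${\rm Comp}(\ppi_q)\le C(D)\,{\rm Comp}(\eeta)$ --- enough for compactness, but not for \eqref{eq:est_Comp}.

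The paper closes exactly this hole with a quantitative Chebyshev argument: setting $\varepsilon_n\coloneqq{\rm Lip}(\eeta)(\sqrt[n]{n}-1)$ and $\Gamma_n\coloneqq\{\gamma:\int_0^1|\dot\gamma_t|^n\,\d t\le({\rm L}+\varepsilon_n)^n\}$, the bound ${\rm Ke}_n(\ppi_n)\le{\rm L}^n$ gives $\ppi_n(\Gamma_n^c)\le n^{-1}$; on $\Gamma_n$ one does have $\sfd(\gamma_0,\gamma_1)\le{\rm L}+\varepsilon_n$, so the normalised restrictions $\tilde\ppi_n\coloneqq\ppi_n|_{\Gamma_n}/\ppi_n(\Gamma_n)$ satisfy ${\rm Comp}(\tilde\ppi_n)\le C({\rm L}+\varepsilon_n)\,{\rm Comp}(\eeta)/\ppi_n(\Gamma_n)$ and still converge narrowly to the same limit $\ppi$, whence ${\rm Comp}(\ppi)\le C({\rm L})\,{\rm Comp}(\eeta)$ in the limit. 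Your proposal needs this (or an equivalent truncation) to be correct; the remaining ingredients --- the formula for $C$, its monotonicity and the limit $C({\rm L})\to1$, the case $K\ge0$, and the limit passage via Propositions \ref{prop:compactness_tp} and \ref{prop:existence_optgeo_infty} --- are in order and match the paper.
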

\begin{proof}
Fix any boundedly-supported \(\infty\)-test plan \(\eeta\) on
\((\X,\sfd,\mm)\). Call \(\mu_0=\rho_0\mm\coloneqq(\e_0)_\#\eeta\)
and \(\mu_1=\rho_1\mm\coloneqq(\e_1)_\#\eeta\). We know from
Theorem \ref{thm:equiv_CD_q} that \((\X,\sfd,\mm)\) is a (non-branching)
\({\sf CD}_q(K,N)\) space for every \(q\in(1,\infty)\), thus
Proposition \ref{prop:bounds_for_OptGeo} ensures that for any \(n\in\N\)
the unique element \(\ppi_n\) of \({\rm OptGeo}_n(\mu_0,\mu_1)\) satisfies
\begin{equation}\label{eq:RCD_BIP_aux1}
\rho^n_t(\gamma_t)^{-\frac{1}{N}}\geq
\rho_0(\gamma_0)^{-\frac{1}{N}}\tau^{(1-t)}_{K,N}
\big(\sfd(\gamma_0,\gamma_1)\big)+\rho_1(\gamma_1)^{-\frac{1}{N}}
\tau^{(t)}_{K,N}\big(\sfd(\gamma_0,\gamma_1)\big),
\end{equation}
for \(\ppi_n\)-a.e.\ \(\gamma\) and every \(t\in[0,1]\),
where we write \((\e_t)_\#\ppi_n=\rho^n_t\mm\). Observe that
\[
\lim_{\theta\searrow 0}\frac{\tau^{(t)}_{K,N}(\theta)}{t}=1,
\quad\text{ uniformly in }t\in(0,1).
\]
In particular, the non-decreasing, continuous function
\(C\colon(0,+\infty)\to(0,+\infty)\), given by
\[
C({\rm L})\coloneqq\bigg(\sup_{\theta\in(0,{\rm L}]}
\sup_{t\in(0,1)}\frac{t}{\tau^{(t)}_{K,N}(\theta)}\bigg)^N,
\quad\text{ for every }{\rm L}>0,
\]
converges to \(1\) as \({\rm L}\to 0\). Notice that if \(K\geq 0\),
then \(C\leq 1\) on a right neighbourhood of \(0\).
Now define \(\varepsilon_n\coloneqq{\rm Lip}(\eeta)(\sqrt[n]{n}-1)\)
for every \(n\in\N\) and observe that \(\varepsilon_n\to 0\) as
\(n\to\infty\). Denote
\[
\Gamma_n\coloneqq\bigg\{\gamma\in{\rm LIP}([0,1],\X)\,:\,\int_0^1
|\dot\gamma_t|^n\,\d t\leq\big({\rm Lip}(\eeta)+\varepsilon_n\big)^n\bigg\}.
\]
Standard verifications show that \(\Gamma_n\) is a Borel subset
of \(C([0,1],\X)\). Given that
\[\begin{split}
{\rm Ke}_n(\ppi_n)&\overset{\eqref{eq:dynOPq}}=W_n^n(\mu_0,\mu_1)
\leq\int\sfd^n(x,y)\,\d(\e_0,\e_1)_\#\eeta(x,y)=
\int\sfd^n(\gamma_0,\gamma_1)\,\d\eeta(\gamma)\\
&\overset{\phantom{\eqref{eq:dynOPq}}}\leq
\int\bigg(\int_0^1|\dot\gamma_t|\,\d t\bigg)^n\d\eeta(\gamma)
\leq\int\!\!\!\int_0^1|\dot\gamma_t|^n\,\d t\,\d\eeta(\gamma)
={\rm Ke}_n(\eeta)\leq{\rm Lip}(\eeta)^n,
\end{split}\]
an application of Chebyshev's inequality yields
\[
\ppi_n(\Gamma_n^c)\leq\frac{1}{({\rm Lip}(\eeta)+\varepsilon_n)^n}
\int\!\!\!\int_0^1|\dot\gamma_t|^n\,\d t\,\d\ppi_n(\gamma)=
\frac{{\rm Ke}_n(\ppi_n)}{({\rm Lip}(\eeta)+\varepsilon_n)^n}\leq
\bigg(\frac{{\rm Lip}(\eeta)}{{\rm Lip}(\eeta)+\varepsilon_n}\bigg)^n
=\frac{1}{n},
\]
so that \(\ppi_n(\Gamma_n)\to 1\) as \(n\to\infty\).
Calling \(M\coloneqq\max\big\{\|\rho_0\|_{L^\infty(\mm)},
\|\rho_1\|_{L^\infty(\mm)}\big\}\leq{\rm Comp}(\eeta)\),
we deduce from \eqref{eq:RCD_BIP_aux1} that
\begin{equation}\label{eq:RCD_BIP_aux2}\begin{split}
\rho^n_t(\gamma_t)^{\frac{1}{N}}&\leq
\bigg(\rho_0(\gamma_0)^{-\frac{1}{N}}(1-t)\,
C\big(\sfd(\gamma_0,\gamma_1)\big)^{-\frac{1}{N}}+
\rho_1(\gamma_1)^{-\frac{1}{N}}t\,
C\big(\sfd(\gamma_0,\gamma_1)\big)^{-\frac{1}{N}}\bigg)^{-1}\\
&\leq C\big(\sfd(\gamma_0,\gamma_1)\big)^{\frac{1}{N}}M^{\frac{1}{N}},
\quad\text{ for }\ppi_n\text{-a.e.\ }\gamma\text{ and for every }t\in[0,1].
\end{split}\end{equation}
If we denote by \(D\) the diameter of
\({\rm spt}(\mu_0)\cup{\rm spt}(\mu_1)\), then
\(\ppi_n\)-a.e.\ \(\gamma\) satisfies \({\rm Lip}(\gamma)\leq D\).
Indeed, \(\ppi_n\)-a.e.\ \(\gamma\) is a geodesic joining
\(\gamma_0\in{\rm spt}(\mu_0)\) to \(\gamma_1\in{\rm spt}(\mu_1)\),
so \({\rm Lip}(\gamma)=\sfd(\gamma_0,\gamma_1)\leq D\).
In particular, \eqref{eq:RCD_BIP_aux2} implies that
\((\ppi_n)_{n\in\N}\subseteq\Pi_\infty(\X)\) and
\(\sup_{n\in\N}{\rm Comp}(\ppi_n)\leq C(D)M\). Hence, an application of
Proposition \ref{prop:existence_optgeo_infty} yields the existence
of a \(\infty\)-test plan \(\ppi\in{\rm OptGeo}_\infty(\mu_0,\mu_1)\)
such that \(\ppi_n\rightharpoonup\ppi\) as \(n\to\infty\), up to a not
relabelled subsequence. Now let us define
\[
\tilde\ppi_n\coloneqq\frac{\ppi_n|_{\Gamma_n}}{\ppi_n(\Gamma_n)}
\in\Pi_\infty(\X),\quad\text{ for every }n\in\N.
\]
Observe that \(\tilde\ppi_n\rightharpoonup\ppi\) as \(n\to\infty\)
and that, writing \((\e_t)_\#\tilde\ppi_n=\tilde\rho^n_t\mm\),
it holds \(\tilde\rho^n_t\leq\rho^n_t/\ppi_n(\Gamma_n)\).
For \(\tilde\ppi_n\)-a.e.\ \(\gamma\) one has
\(\sfd(\gamma_0,\gamma_1)\leq\int_0^1|\dot\gamma_t|\,\d t
\leq\big(\int_0^1|\dot\gamma_t|^n\,\d t\big)^{1/n}\leq{\rm Lip}(\eeta)
+\varepsilon_n\), so \eqref{eq:RCD_BIP_aux2} yields
\[
\tilde\rho^n_t(\gamma_t)\leq
\frac{C\big({\rm Lip}(\eeta)+\varepsilon_n\big)M}{\ppi_n(\Gamma_n)},
\quad\text{ for }\tilde\ppi_n\text{-a.e.\ }\gamma
\text{ and for every }t\in[0,1].
\]
This implies that \({\rm Comp}(\tilde\ppi_n)\leq C\big({\rm Lip}(\eeta)
+\varepsilon_n\big)M/\ppi_n(\Gamma_n)\), thus it
follows from Remark \ref{rmk:cont_eval_meas} that
\({\rm Comp}(\ppi)\leq C\big({\rm Lip}(\eeta)\big)M\),
as desired. Therefore, the statement is achieved.
\end{proof}
Next we introduce and study the concept of a polygonal interpolation
of a given test plan by piecewise \(\infty\)-optimal dynamical plans.
\begin{definition}\label{def:piec_optgeo}
Let \((\X,\sfd,\mm)\) be a metric measure space.
Then we define
\[\begin{split}
\Pi_{bs}(\X)&\coloneqq\big\{\ppi\in\Pi_\infty(\X)\,:
\,{\rm spt}(\ppi)\text{ is bounded}\big\},\\
\Pi_{\rm G}(\X)&\coloneqq\Big\{\ppi\in\Pi_{bs}(\X)\,:\,
\ppi\in{\rm OptGeo}_\infty\big((\e_0)_\#\ppi,(\e_1)_\#\ppi\big)\Big\}.
\end{split}\]
Moreover, we define \(\Pi_{\rm PG}(\X)\) as the family of
all those \(\infty\)-test plans \(\ppi\) on \((\X,\sfd,\mm)\)
for which there exist \(0=t_0<t_1<\ldots<t_n=1\) such that
\(({\rm restr}_{t_{i-1}}^{t_i})_\#\ppi\in\Pi_{\rm G}(\X)\) for
every \(i=1,\ldots,n\).
\end{definition}
We will say that \(\ppi\in\Pi_{\rm PG}(\X)\) is a polygonal interpolation
of a given \(\eeta\in\Pi_{bs}(\X)\) provided \((\e_{t_i})_\#\ppi=
(\e_{t_i})_\#\eeta\) for every \(i=1,\ldots,n\), where
\(t_0,\ldots,t_n\) are chosen as in Definition \ref{def:piec_optgeo}.
\begin{remark}\label{rmk:poly_interp_ineq}{\rm
We claim that
\[
{\rm Lip}(\ppi)\leq{\rm Lip}(\eeta),
\quad\text{ whenever }\ppi\in\Pi_{\rm PG}(\X)
\text{ is a polygonal interpolation of }\eeta\in\Pi_{bs}(\X).
\]
To prove it, call
\(\ppi_i\coloneqq({\rm restr}_{t_{i-1}}^{t_i})_\#\ppi\)
and \(\eeta_i\coloneqq({\rm restr}_{t_{i-1}}^{t_i})_\#\eeta\)
for every \(i=1,\ldots,n\), where \(t_0,\ldots,t_n\) are as
in Definition \ref{def:piec_optgeo}. Since \((\e_0,\e_1)_\#\eeta_i
\in{\rm Adm}\big((\e_0)_\#\ppi_i,(\e_1)_\#\ppi_i\big)\), one has that
\[\begin{split}
{\rm Lip}(\ppi_i)&=W_\infty\big((\e_0)_\#\ppi_i,(\e_1)_\#\ppi_i\big)
\leq\underset{\seeta_i\text{-a.e.\ }\gamma}{\rm ess\,sup}
\,\sfd(\gamma_0,\gamma_1)\leq\underset{\seeta_i\text{-a.e.\ }\gamma}
{\rm ess\,sup}\,{\rm Lip}(\gamma)={\rm Lip}(\eeta_i)\\
&\leq(t_i-t_{i-1}){\rm Lip}(\eeta).
\end{split}\]
Hence, we conclude that
\({\rm Lip}(\ppi)=\max_{i=1,\ldots,n}{\rm Lip}(\ppi_i)/(t_i-t_{i-1})
\leq{\rm Lip}(\eeta)\), as claimed.
\fr}\end{remark}

Let us introduce some further notation.
Given a \(\infty\)-test plan \(\ppi\),
we define its trace \([\ppi]\) as
\[
[\ppi]\coloneqq\bigcup_{t\in[0,1]}{\rm spt}
\big((\e_t)_\#\ppi\big)\subseteq\X.
\]
Moreover, given an open set \(\Omega\subseteq\X\) and a
family \(\Pi\) of \(\infty\)-test plans on \((\X,\sfd,\mm)\),
we define
\[
\Pi[\Omega]\coloneqq\Big\{\ppi\in\Pi\;\Big|\;[\ppi]\subseteq
\Omega,\,\sfd\big([\ppi],\X\setminus\Omega\big)>0\Big\}.
\]
\begin{lemma}\label{lem:aux_interp}
Let \((\X,\sfd,\mm)\) be a non-branching \({\sf CD}(K,N)\) space,
with \(K\in\R\), \(N\in(1,\infty)\), and \(\mm(\X)<+\infty\).
Let \(\eeta\in\Pi_{bs}(\X)\) be given. Then there exists a
sequence \((\ppi_n)_{n\in\N}\subseteq\Pi_{\rm PG}(\X)\)
of polygonal interpolations of \(\eeta\) such that
\[
{\rm Lip}(\ppi_n)\leq{\rm Lip}(\eeta),\quad
{\rm Comp}(\ppi_n)\leq C\big({\rm Lip}(\eeta)/n\big){\rm Comp}(\eeta),
\quad\text{ for every }n\in\N,
\]
where \({\rm L}\mapsto C({\rm L})\) stands for the profile function
of \((\X,\sfd,\mm)\).
Moreover, we can additionally require that each trace \([\ppi_n]\) is
contained in the closed \(\frac{{\rm Lip}(\seeta)}{n}\)-neighbourhood
of \([\eeta]\).
\end{lemma}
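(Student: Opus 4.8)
The plan is to construct the polygonal interpolations $\ppi_n$ by subdividing $[0,1]$ into $n$ equal subintervals, applying Theorem \ref{thm:CD_are_BIP} on each piece, and then glueing the resulting $\infty$-optimal dynamical plans back together. Concretely, fix $n\in\N$ and set $t_i\coloneqq i/n$ for $i=0,\ldots,n$. For each $i$, let $\eeta_i\coloneqq({\rm restr}_{t_{i-1}}^{t_i})_\#\eeta$; since ${\rm restr}_{t_{i-1}}^{t_i}$ is $1/n$-Lipschitz as a map (rescaling time by $1/n$) composed with the Lipschitz curve, we get ${\rm Lip}(\eeta_i)\leq{\rm Lip}(\eeta)/n$, and $\eeta_i$ has bounded support with ${\rm Comp}(\eeta_i)\leq{\rm Comp}(\eeta)$ (restriction of curves does not worsen the compression along the subinterval). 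Applying Theorem \ref{thm:CD_are_BIP} to $\eeta_i$ yields $\sigma_i\in{\rm OptGeo}_\infty\big((\e_0)_\#\eeta_i,(\e_1)_\#\eeta_i\big)$ with ${\rm Comp}(\sigma_i)\leq C\big({\rm Lip}(\eeta)/n\big){\rm Comp}(\eeta_i)\leq C\big({\rm Lip}(\eeta)/n\big){\rm Comp}(\eeta)$, using that $C$ is nondecreasing; note $(\e_0)_\#\sigma_i=(\e_{t_{i-1}})_\#\eeta$ and $(\e_1)_\#\sigma_i=(\e_{t_i})_\#\eeta$, so consecutive marginals match up.

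The next step is the glueing. Because $(\e_1)_\#\sigma_i=(\e_0)_\#\sigma_{i+1}$ for every $i$, one can use the gluing lemma for measures on curve spaces (disintegrate each $\sigma_i$ with respect to its initial/final endpoint, then compose) to produce a measure $\ppi_n$ on $C([0,1],\X)$ such that $({\rm restr}_{t_{i-1}}^{t_i})_\#\ppi_n = \sigma_i$ for each $i=1,\ldots,n$ (after reparametrising $\sigma_i$, which is a geodesic on $[0,1]$, to run over $[t_{i-1},t_i]$ at constant speed). By construction $\ppi_n$ is then a polygonal interpolation of $\eeta$ in the sense of Definition \ref{def:piec_optgeo}, with $(\e_{t_i})_\#\ppi_n=(\e_{t_i})_\#\eeta$. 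For the compression bound one checks that for $t\in[t_{i-1},t_i]$ the marginal $(\e_t)_\#\ppi_n=(\e_s)_\#\sigma_i$ for the corresponding rescaled $s\in[0,1]$, hence ${\rm Comp}(\ppi_n)=\max_i{\rm Comp}(\sigma_i)\leq C\big({\rm Lip}(\eeta)/n\big){\rm Comp}(\eeta)$. The bound ${\rm Lip}(\ppi_n)\leq{\rm Lip}(\eeta)$ follows directly from Remark \ref{rmk:poly_interp_ineq}, since $\ppi_n$ is a polygonal interpolation of $\eeta$; this is exactly why that remark was isolated. Finally, for the trace statement: on each subinterval $\sigma_i$ is concentrated on geodesics joining $\gamma_{t_{i-1}}\in{\rm spt}((\e_{t_{i-1}})_\#\eeta)$ to $\gamma_{t_i}\in{\rm spt}((\e_{t_i})_\#\eeta)$, so every intermediate point lies on a geodesic between two points of $[\eeta]$ whose distance is at most ${\rm Lip}(\eeta)/n$; hence it lies within ${\rm Lip}(\eeta)/n$ of an endpoint, i.e.\ within the closed $\frac{{\rm Lip}(\eeta)}{n}$-neighbourhood of $[\eeta]$. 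Taking closures gives $[\ppi_n]$ inside that neighbourhood.

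The main obstacle I anticipate is making the glueing construction rigorous while simultaneously respecting the reparametrisation: each $\sigma_i$ lives on $[0,1]$ and is a geodesic plan, but to assemble a single curve on $[0,1]$ one must stretch $\sigma_i$ onto $[t_{i-1},t_i]$ and verify measurability of the disintegrations and that the composed object is genuinely a Borel probability measure on $C([0,1],\X)$ — the curves produced are continuous (endpoints agree) but only piecewise geodesic, not geodesic, which is fine since $\Pi_{\rm PG}$ only asks for the pieces to be in $\Pi_{\rm G}$. A secondary technical point is confirming that restriction behaves well for the compression constant, i.e.\ that ${\rm Comp}(({\rm restr}_s^t)_\#\eeta)\leq{\rm Comp}(\eeta)$ and likewise that the glued plan's marginals are exactly the rescaled marginals of the pieces; both are elementary push-forward computations but need to be stated. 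Everything else — the speed/Lipschitz estimates, the use of monotonicity of $C$, the neighbourhood inclusion — is routine given Theorem \ref{thm:CD_are_BIP} and Remark \ref{rmk:poly_interp_ineq}.
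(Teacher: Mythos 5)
Your proposal is correct and follows essentially the same route as the paper: subdivide $[0,1]$ into $n$ equal subintervals, apply Theorem \ref{thm:CD_are_BIP} to each restricted plan $({\rm restr}_{(i-1)/n}^{i/n})_\#\eeta$ (using monotonicity of $C$ together with ${\rm Lip}(\eeta_i)\leq{\rm Lip}(\eeta)/n$ and ${\rm Comp}(\eeta_i)\leq{\rm Comp}(\eeta)$), glue the resulting $\infty$-optimal dynamical plans, and invoke Remark \ref{rmk:poly_interp_ineq} for the Lipschitz bound and the ${\rm Lip}(\eeta)/n$ bound on each piece for the trace inclusion. The glueing step you flag as the main technical point is exactly the one the paper also leaves as a standard disintegration argument.
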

\begin{proof}
Let \(n\in\N\) be fixed. Given any \(i=1,\ldots,n\),
choose any test plan \(\ppi^i_n\in\Pi_{\rm G}(\X)\) such that
\((\e_0)_\#\ppi^i_n=(\e_{(i-1)/n})_\#\eeta\),
\((\e_1)_\#\ppi^i_n=(\e_{i/n})_\#\eeta\), and
\begin{equation}\label{eq:aux_interp_aux}
{\rm Comp}(\ppi^i_n)\leq C\big({\rm Lip}(\eeta)/n\big){\rm Comp}(\eeta),
\end{equation}
whose existence is guaranteed by Theorem \ref{thm:CD_are_BIP}.
Thanks to a glueing argument, we find a plan
\(\ppi_n\in\Pi_{\rm PG}(\X)\) such that
\(({\rm restr}_{(i-1)/n}^{i/n})_\#\ppi_n=\ppi_n^i\) for every
\(i=1,\ldots,n\). Note that \eqref{eq:aux_interp_aux} yields
\({\rm Comp}(\ppi_n)\leq C\big({\rm Lip}(\eeta)/n\big){\rm Comp}(\eeta)\).
Also, \(\ppi_n\) is a polygonal interpolation of \(\eeta\) by construction,
thus \({\rm Lip}(\ppi_n)\leq{\rm Lip}(\eeta)\)
by Remark \ref{rmk:poly_interp_ineq}.
Finally, since each \(\infty\)-test plan \(\ppi^i_n\) satisfies
\({\rm Lip}(\ppi^i_n)\leq{\rm Lip}(\eeta)/n\),
we deduce that \([\ppi^i_n]\) lies inside the closed
\(\frac{{\rm Lip}(\seeta)}{n}\)-neighbourhood of
\({\rm spt}\big((\e_{i/n})_\#\eeta\big)\), and thus accordingly
\([\ppi_n]\) lies in the
\(\frac{{\rm Lip}(\seeta)}{n}\)-neighbourhood of \([\eeta]\).
\end{proof}

Before passing to the main result of this section,
we introduce some auxiliary terminology.
Given any non-empty family \(\Pi\) of \(\infty\)-test plans
on a metric measure space \((\X,\sfd,\mm)\), we denote by
\({\rm BV}_\Pi^\star(\X)\) the family of all functions
\(f\in L^1(\mm)\) for which there exists \(C\geq 0\) satisfying
\[
\int f(\gamma_1)-f(\gamma_0)\,\d\ppi(\gamma)\leq
{\rm Comp}(\ppi){\rm Lip}(\ppi)C,\quad\text{ for every }\ppi\in\Pi.
\]
The minimal such constant \(C\geq 0\) will be denoted by
\(|{\boldsymbol D}f|_\Pi^\star(\X)\). Notice that Theorem
\ref{thm:equiv_BV} can be equivalently rephrased by saying that
\({\rm BV}(\X)={\rm BV}_{\Pi_\infty}^\star(\X)\) and that
\(|{\boldsymbol D}f|(\X)=|{\boldsymbol D}f|_{\Pi_\infty}^\star(\X)\) for
every \(f\in{\rm BV}(\X)\), where we set
\(\Pi_\infty\coloneqq\Pi_\infty(\X)\) for brevity.
For any \(f\in{\rm BV}_\Pi^\star(\X)\), we define
\[
|{\boldsymbol D}f|_\Pi^\star(\Omega)\coloneqq
\sup_{\sppi\in\Pi[\Omega]}\frac{1}{{\rm Comp}(\ppi){\rm Lip}(\ppi)}
\int f(\gamma_1)-f(\gamma_0)\,\d\ppi,\quad\text{ for every }
\Omega\subseteq\X\text{ open,}
\]
and we extend it to the whole Borel \(\sigma\)-algebra of
\((\X,\sfd)\) in the following standard manner:
\[
|{\boldsymbol D}f|_\Pi^\star(B)\coloneqq
\inf\big\{|{\boldsymbol D}f|_\Pi^\star(\Omega)\;\big|\;
\Omega\subseteq\X\text{ open, }B\subseteq\Omega\big\},
\quad\text{ for every }B\subseteq\X\text{ Borel.}
\]
Notice that we are not claiming that \(|{\boldsymbol D}f|_\Pi^\star\)
is a measure for an arbitrary choice of \(\Pi\). Observe also
that \({\rm BV}_\Pi(\X)\subseteq{\rm BV}_\Pi^\star(\X)\) and
\(|{\boldsymbol D}f|_\Pi^\star(B)\leq|{\boldsymbol D}f|_\Pi(B)\)
for every \(f\in{\rm BV}_\Pi(\X)\) and \(B\subseteq\X\) Borel, as
one can easily check directly from the definitions. It also holds
that \(|{\boldsymbol D}f|=|{\boldsymbol D}f|_{\Pi_\infty}^\star\).
\begin{theorem}[Master geodesic plans on non-branching \(\sf CD\) spaces]
\label{thm:master_G}
Let \((\X,\sfd,\mm)\) be a non-branching \({\sf CD}(K,N)\) space,
for some \(K\in\R\) and \(N\in(1,\infty)\). Suppose the measure
\(\mm\) is finite. Then \(\Pi_{\rm G}(\X)\) is a master family for
\({\rm BV}(\X)\). More generally,
\({\rm BV}(\X)={\rm BV}_{\Pi_{\rm G}(\X)}^\star(\X)\) and
\[
|{\boldsymbol D}f|(B)=|{\boldsymbol D}f|_{\Pi_{\rm G}(\X)}^\star(B),
\quad\text{ for every }f\in{\rm BV}(\X)\text{ and }B\subseteq\X
\text{ Borel.}
\]
\end{theorem}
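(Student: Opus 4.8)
Write $\Pi_\infty\coloneqq\Pi_\infty(\X)$ and $\Pi_{\rm G}\coloneqq\Pi_{\rm G}(\X)$, and recall the reformulation of Theorem \ref{thm:equiv_BV} noted before its statement: ${\rm BV}(\X)={\rm BV}^\star_{\Pi_\infty}(\X)$ and $|{\boldsymbol D}f|=|{\boldsymbol D}f|^\star_{\Pi_\infty}$ as set functions on Borel sets. Since $\Pi_{\rm G}\subseteq\Pi_\infty$, the inclusions ${\rm BV}(\X)\subseteq{\rm BV}_{\Pi_{\rm G}}(\X)\subseteq{\rm BV}^\star_{\Pi_{\rm G}}(\X)$, the inequalities $|{\boldsymbol D}f|^\star_{\Pi_{\rm G}}(B)\leq|{\boldsymbol D}f|_{\Pi_{\rm G}}(B)\leq|{\boldsymbol D}f|(B)$ for $f\in{\rm BV}(\X)$ and $B$ Borel, and $|{\boldsymbol D}f|^\star_{\Pi_{\rm G}}(\Omega)\leq|{\boldsymbol D}f|^\star_{\Pi_\infty}(\Omega)=|{\boldsymbol D}f|(\Omega)$ for $f\in{\rm BV}(\X)$ and $\Omega$ open (because $\Pi_{\rm G}[\Omega]\subseteq\Pi_\infty[\Omega]$) are all immediate from the definitions. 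Hence the whole statement reduces to the single claim that every $f\in{\rm BV}^\star_{\Pi_{\rm G}}(\X)$ satisfies
\[
\int f(\gamma_1)-f(\gamma_0)\,\d\eeta\leq{\rm Comp}(\eeta)\,{\rm Lip}(\eeta)\,|{\boldsymbol D}f|^\star_{\Pi_{\rm G}}(\Omega)\qquad\text{for every open }\Omega\subseteq\X\text{ and every }\eeta\in\Pi_\infty[\Omega].
\]
Indeed, taking the supremum over $\eeta$ gives $|{\boldsymbol D}f|^\star_{\Pi_\infty}(\Omega)\leq|{\boldsymbol D}f|^\star_{\Pi_{\rm G}}(\Omega)$; with $\Omega=\X$ this and Theorem \ref{thm:equiv_BV} yield $f\in{\rm BV}(\X)$ with $|{\boldsymbol D}f|(\X)\leq|{\boldsymbol D}f|^\star_{\Pi_{\rm G}}(\X)$, whence the opposite inequalities above become available and upgrade to equality on open sets, hence on all Borel sets (all three set functions being extended from open sets by the same infimum recipe); squeezing $|{\boldsymbol D}f|^\star_{\Pi_{\rm G}}\leq|{\boldsymbol D}f|_{\Pi_{\rm G}}\leq|{\boldsymbol D}f|=|{\boldsymbol D}f|^\star_{\Pi_{\rm G}}$ then shows ${\rm BV}_{\Pi_{\rm G}}(\X)={\rm BV}(\X)$ and $|{\boldsymbol D}f|_{\Pi_{\rm G}}=|{\boldsymbol D}f|$, i.e.\ $\Pi_{\rm G}$ is a master family.

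To prove the displayed bound, fix $f\in{\rm BV}^\star_{\Pi_{\rm G}}(\X)$, an open set $\Omega$, and $\eeta\in\Pi_\infty[\Omega]$; we may assume ${\rm Lip}(\eeta)>0$, and we set $\delta\coloneqq\sfd([\eeta],\X\setminus\Omega)>0$. First I would reduce to $\eeta\in\Pi_{bs}(\X)$: fixing $\bar x\in\X$ and $\Gamma_R\coloneqq\{\gamma:\gamma_0\in\bar B_R(\bar x)\}$, the renormalisations $\eeta_R\coloneqq\eeta(\Gamma_R)^{-1}\eeta|_{\Gamma_R}$ (defined whenever $\eeta(\Gamma_R)>0$) are boundedly-supported $\infty$-test plans with ${\rm Lip}(\eeta_R)\leq{\rm Lip}(\eeta)$, ${\rm Comp}(\eeta_R)\leq\eeta(\Gamma_R)^{-1}{\rm Comp}(\eeta)$, $[\eeta_R]\subseteq[\eeta]$ (hence $\sfd([\eeta_R],\X\setminus\Omega)\geq\delta$), and $\int_{\Gamma_R}f(\gamma_1)-f(\gamma_0)\,\d\eeta=\eeta(\Gamma_R)\int f(\gamma_1)-f(\gamma_0)\,\d\eeta_R\to\int f(\gamma_1)-f(\gamma_0)\,\d\eeta$ as $R\to\infty$ by dominated convergence (the integrand lies in $L^1(\eeta)$ since $(\e_i)_\#\eeta\leq{\rm Comp}(\eeta)\mm$ and $f\in L^1(\mm)$). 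So it is enough to bound $\int f(\gamma_1)-f(\gamma_0)\,\d\eeta$ for $\eeta\in\Pi_{bs}(\X)$ with $[\eeta]\subseteq\Omega$ and $\sfd([\eeta],\X\setminus\Omega)\geq\delta>0$. For such $\eeta$, Lemma \ref{lem:aux_interp} provides, for each $n$, a polygonal interpolation $\ppi_n\in\Pi_{\rm PG}(\X)$ of $\eeta$ with subdivision $t_i=i/n$ satisfying ${\rm Lip}(\ppi_n)\leq{\rm Lip}(\eeta)$, ${\rm Comp}(\ppi_n)\leq C({\rm Lip}(\eeta)/n){\rm Comp}(\eeta)$ ($C$ being the profile function of $(\X,\sfd,\mm)$), and with $[\ppi_n]$ contained in the closed $({\rm Lip}(\eeta)/n)$-neighbourhood of $[\eeta]$. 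Set $\ppi_n^i\coloneqq({\rm restr}_{(i-1)/n}^{i/n})_\#\ppi_n\in\Pi_{\rm G}(\X)$. Once $n$ is so large that ${\rm Lip}(\eeta)/n<\delta$, the triangle inequality gives $[\ppi_n^i]\subseteq[\ppi_n]\subseteq\Omega$ and $\sfd([\ppi_n^i],\X\setminus\Omega)\geq\delta-{\rm Lip}(\eeta)/n>0$, so $\ppi_n^i\in\Pi_{\rm G}[\Omega]$; moreover, since $\ppi_n^i\in\Pi_{\rm G}(\X)$ and the $i$-th ``slice'' of $(\e_0,\e_1)_\#\eeta$ is admissible between $(\e_{(i-1)/n})_\#\eeta$ and $(\e_{i/n})_\#\eeta$, one has ${\rm Lip}(\ppi_n^i)=W_\infty\big((\e_{(i-1)/n})_\#\eeta,(\e_{i/n})_\#\eeta\big)\leq(1/n){\rm Lip}(\eeta)$.

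Now I would telescope along the subdivision. Since $\ppi_n$ interpolates $\eeta$ one has $(\e_0)_\#\ppi_n=(\e_0)_\#\eeta$, $(\e_1)_\#\ppi_n=(\e_1)_\#\eeta$, while $\e_0\circ{\rm restr}_{(i-1)/n}^{i/n}=\e_{(i-1)/n}$ and $\e_1\circ{\rm restr}_{(i-1)/n}^{i/n}=\e_{i/n}$, so (all integrals being finite because $(\e_t)_\#\ppi_n\leq{\rm Comp}(\ppi_n)\mm$)
\[
\int f(\gamma_1)-f(\gamma_0)\,\d\eeta=\int f(\gamma_1)-f(\gamma_0)\,\d\ppi_n=\sum_{i=1}^n\int f(\sigma_1)-f(\sigma_0)\,\d\ppi_n^i(\sigma).
\]
Applying the definition of $|{\boldsymbol D}f|^\star_{\Pi_{\rm G}}(\Omega)$ to each $\ppi_n^i\in\Pi_{\rm G}[\Omega]$ and then using ${\rm Comp}(\ppi_n^i)\leq{\rm Comp}(\ppi_n)\leq C({\rm Lip}(\eeta)/n){\rm Comp}(\eeta)$ and ${\rm Lip}(\ppi_n^i)\leq(1/n){\rm Lip}(\eeta)$, the right-hand side is at most
\[
\sum_{i=1}^n{\rm Comp}(\ppi_n^i)\,{\rm Lip}(\ppi_n^i)\,|{\boldsymbol D}f|^\star_{\Pi_{\rm G}}(\Omega)\leq C\big({\rm Lip}(\eeta)/n\big)\,{\rm Comp}(\eeta)\,{\rm Lip}(\eeta)\,|{\boldsymbol D}f|^\star_{\Pi_{\rm G}}(\Omega).
\]
Letting $n\to\infty$ and recalling $C({\rm Lip}(\eeta)/n)\to1$ gives the desired bound for boundedly-supported $\eeta$, and then the bound for general $\eeta\in\Pi_\infty[\Omega]$ follows from the truncation step above.

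The substantive difficulty has already been absorbed into Theorem \ref{thm:CD_are_BIP} (the optimal-transport construction of an interpolating geodesic plan with $C$-controlled compression) and Lemma \ref{lem:aux_interp} (the polygonal glueing); in the present argument the two points that need care are: ensuring each polygonal piece $\ppi_n^i$ lies in $\Pi_{\rm G}[\Omega]$, which is exactly why Lemma \ref{lem:aux_interp} records that $[\ppi_n]$ collapses onto $[\eeta]$ as $n\to\infty$; and the preliminary truncation $\eeta\mapsto\eeta_R$, which is needed because Lemma \ref{lem:aux_interp} (through Theorem \ref{thm:CD_are_BIP}) applies only to boundedly-supported plans while a general $\eeta\in\Pi_\infty[\Omega]$ may have unbounded trace when $\Omega$ is unbounded.
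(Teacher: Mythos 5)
Your proof is correct and follows essentially the same route as the paper's: reduce to the $\star$-inequality via Theorem \ref{thm:equiv_BV}, truncate a general plan in $\Pi_\infty[\Omega]$ to boundedly-supported ones, apply Lemma \ref{lem:aux_interp} to get polygonal geodesic interpolations, telescope over the pieces $\ppi_n^i\in\Pi_{\rm G}[\Omega]$, and let $n\to\infty$ using $C({\rm L})\to1$. The only difference is organisational: the paper factors the chain through the intermediate space ${\rm BV}^\star_{\Pi_{\rm PG}}(\X)$ in three separate steps (geodesic $\to$ piecewise-geodesic $\to$ boundedly-supported $\to$ general), whereas you fuse the first two into a single telescoping estimate and perform the truncation first.
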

\begin{proof}
For the sake of brevity, we will write \(\Pi_{\rm G}\) in place
of \(\Pi_{\rm G}(\X)\). It follows from the very definitions
of the involved objects that
\({\rm BV}_{\Pi_\infty}^\star(\X)={\rm BV}(\X)\subseteq
{\rm BV}_{\Pi_{\rm G}}(\X)\subseteq{\rm BV}_{\Pi_{\rm G}}^\star(\X)\) and
\[
|{\boldsymbol D}f|_{\Pi_{\rm G}}^\star(B)
\leq|{\boldsymbol D}f|_{\Pi_{\rm G}}(B)\leq
|{\boldsymbol D}f|(B)=|{\boldsymbol D}f|_{\Pi_\infty}^\star(B),
\quad\text{ for all }f\in{\rm BV}(\X)\text{ and }B\subseteq\X
\text{ Borel.}
\]
Hence, in order to achieve the statement it suffices to prove that
\({\rm BV}_{\Pi_{\rm G}}^\star(\X)\subseteq
{\rm BV}_{\Pi_\infty}^\star(\X)\) and
\begin{equation}\label{eq:master_G_claim1}
|{\boldsymbol D}f|_{\Pi_\infty}^\star(\Omega)\leq|{\boldsymbol D}f|
_{\Pi_{\rm G}}^\star(\Omega)\quad\text{for every }f\in{\rm BV}
_{\Pi_{\rm G}}^\star(\X)\text{ and }\Omega\subseteq\X\text{ open.}
\end{equation}
{\color{blue}\textsc{Step 1.}} 
Let \(\Omega\subseteq\X\) be a given open set. Calling
\(\Pi_{\rm PG}\coloneqq\Pi_{\rm PG}(\X)\) for brevity, we claim that
\begin{equation}\label{eq:master_G_claim2}
{\rm BV}_{\Pi_{\rm G}}^\star(\X)\subseteq
{\rm BV}_{\Pi_{\rm PG}}^\star(\X),\qquad
|{\boldsymbol D}f|_{\Pi_{\rm PG}}^\star(\Omega)
\leq|{\boldsymbol D}f|_{\Pi_{\rm G}}^\star(\Omega)
\quad\text{for every }f\in{\rm BV}_{\Pi_{\rm G}}^\star(\X).
\end{equation}
In order to prove it, fix \(f\in{\rm BV}_{\Pi_{\rm G}}^\star(\X)\)
and \(\ppi\in\Pi_{\rm PG}[\Omega]\). Choose \(0=t_0<t_1<\ldots<t_n=1\) such that
\(\ppi_i\coloneqq({\rm restr}_{t_{i-1}}^{t_i})_\#\ppi\in\Pi_{\rm G}[\Omega]\)
for all \(i=1,\ldots,n\). Since \({\rm Lip}(\ppi_i)\leq(t_i-t_{i-1})
{\rm Lip}(\ppi)\) and \({\rm Comp}(\ppi_i)\leq{\rm Comp}(\ppi)\) for
every \(i=1,\ldots,n\), we may estimate
\[\begin{split}
\int f(\gamma_1)-f(\gamma_0)\,\d\ppi(\gamma)&=
\sum_{i=1}^n\int f(\gamma_{t_i})-f(\gamma_{t_{i-1}})\,\d\ppi(\gamma)
=\sum_{i=1}^n\int f(\gamma_1)-f(\gamma_0)\,\d\ppi_i(\gamma)\\
&\leq\sum_{i=1}^n{\rm Comp}(\ppi_i){\rm Lip}(\ppi_i)
|{\boldsymbol D}f|_{\Pi_{\rm G}}^\star(\Omega)\leq{\rm Comp}(\ppi)
{\rm Lip}(\ppi)|{\boldsymbol D}f|_{\Pi_{\rm G}}^\star(\Omega),
\end{split}\]
whence \eqref{eq:master_G_claim2} follows thanks to the arbitrariness
of \(\ppi\in\Pi_{\rm PG}[\Omega]\).\\
{\color{blue}\textsc{Step 2.}} Next we claim that, calling
\(\Pi_{bs}\coloneqq\Pi_{bs}(\X)\) for brevity, it holds that
\begin{equation}\label{eq:master_G_claim3}
{\rm BV}_{\Pi_{\rm PG}}^\star(\X)\subseteq{\rm BV}_{\Pi_{bs}}^\star(\X),
\qquad|{\boldsymbol D}f|_{\Pi_{bs}}^\star(\Omega)\leq
|{\boldsymbol D}f|_{\Pi_{\rm PG}}^\star(\Omega)
\quad\text{for every }f\in{\rm BV}_{\Pi_{\rm PG}}^\star(\X).
\end{equation}
In order to prove it, fix \(f\in{\rm BV}_{\Pi_{\rm PG}}^\star(\X)\)
and \(\ppi\in\Pi_{bs}[\Omega]\). Lemma \ref{lem:aux_interp} yields the existence
of polygonal interpolations \((\ppi_n)_{n\in\N}\subseteq\Pi_{\rm PG}\)
of \(\ppi\) with \({\rm Comp}(\ppi_n)\leq
C\big({\rm Lip}(\ppi)/n\big){\rm Comp}(\ppi)\) and
\({\rm Lip}(\ppi_n)\leq{\rm Lip}(\ppi)\) for every \(n\in\N\),
and such that \([\ppi_n]\) lies in the closed
\(\frac{{\rm Lip}(\sppi)}{n}\)-neighbourhood of \([\ppi]\).
Chosen \(\bar n\in\N\) so that \({\rm Lip}(\ppi)/\bar n<
\sfd\big([\ppi],\X\setminus\Omega\big)\), we thus have that
\(\ppi_n\in\Pi_{\rm PG}[\Omega]\) for every \(n\geq\bar n\).
Given that \(C({\rm L})\to 1\) as \({\rm L}\to 0\), by letting
\(n\to\infty\) in
\[\begin{split}
\int f(\gamma_1)-f(\gamma_0)\,\d\ppi(\gamma)
&=\int f\,\d(\e_1)_\#\ppi-\int f\,\d(\e_0)_\#\ppi
=\int f\,\d(\e_1)_\#\ppi_n-\int f\,\d(\e_0)_\#\ppi_n\\
&=\int f(\gamma_1)-f(\gamma_0)\,\d\ppi_n(\gamma)
\leq{\rm Comp}(\ppi_n){\rm Lip}(\ppi_n)
|{\boldsymbol D}f|_{\Pi_{\rm PG}}^\star(\Omega)\\
&\leq C\big({\rm Lip}(\ppi)/n\big){\rm Comp}(\ppi){\rm Lip}(\ppi)
|{\boldsymbol D}f|_{\Pi_{\rm PG}}^\star(\Omega)
\end{split}\]
we conclude that \(\int f(\gamma_1)-f(\gamma_0)\,\d\ppi(\gamma)\leq
{\rm Comp}(\ppi){\rm Lip}(\ppi)|{\boldsymbol D}f|_{\Pi_{\rm PG}}^\star(\Omega)\),
obtaining \eqref{eq:master_G_claim3}.\\
{\color{blue}\textsc{Step 3.}} Finally, we claim that
\begin{equation}\label{eq:master_G_claim4}
{\rm BV}_{\Pi_{bs}}^\star(\X)\subseteq{\rm BV}_{\Pi_\infty}^\star(\X),
\qquad|{\boldsymbol D}f|_{\Pi_\infty}^\star(\Omega)\leq|{\boldsymbol D}f|
_{\Pi_{bs}}^\star(\Omega)\quad\text{for every }f\in{\rm BV}_{\Pi_{bs}}^\star(\X).
\end{equation}
In order to prove it, fix any \(f\in{\rm BV}_{\Pi_{bs}}^\star(\X)\)
and a \(\infty\)-test plan \(\ppi\in\Pi_\infty[\Omega]\) on \((\X,\sfd,\mm)\).
Fix a curve \(\bar\gamma\in{\rm spt}(\ppi)\) and define
\(\Gamma_n\coloneqq\big\{\gamma\in C([0,1],\X)\,:\,
\sfd_{\rm sup}(\gamma,\bar\gamma)\leq n\big\}\) for every \(n\in\N\).
Define
\[
\ppi_n\coloneqq\frac{\ppi|_{\Gamma_n}}{\ppi(\Gamma_n)}
\in\Pi_{bs}[\Omega],\quad\text{ for every }n\in\N.
\]
Observe that \({\rm Comp}(\ppi_n)\leq{\rm Comp}(\ppi)/\ppi(\Gamma_n)\)
and \({\rm Lip}(\ppi_n)\leq{\rm Lip}(\ppi)\) for every \(n\in\N\).
Also, it holds \(\ppi(\Gamma_n)\nearrow 1\) as \(n\to\infty\).
Therefore, by using the dominated convergence theorem we get
\[\begin{split}
\int f(\gamma_1)-f(\gamma_0)\,\d\ppi(\gamma)&=
\lim_{n\to\infty}\int f(\gamma_1)-f(\gamma_0)\,\d\ppi_n(\gamma)\\
&\leq\limi_{n\to\infty}{\rm Comp}(\ppi_n){\rm Lip}(\ppi_n)
|{\boldsymbol D}f|_{\Pi_{bs}}^\star(\Omega)\\
&\leq{\rm Comp}(\ppi){\rm Lip}(\ppi)|{\boldsymbol D}f|
_{\Pi_{bs}}^\star(\Omega)\lim_{n\to\infty}\frac{1}{\ppi(\Gamma_n)}\\
&={\rm Comp}(\ppi){\rm Lip}(\ppi)|{\boldsymbol D}f|_{\Pi_{bs}}^\star(\Omega),
\end{split}\]
thus proving \eqref{eq:master_G_claim4}. By combining
\eqref{eq:master_G_claim2}, \eqref{eq:master_G_claim3},
and \eqref{eq:master_G_claim4}, we eventually obtain
\eqref{eq:master_G_claim1}.
\end{proof}
\begin{remark}\label{rmk:simpler_K_positive}{\rm
We point out that if \((\X,\sfd,\mm)\) is a non-branching
\({\sf CD}(K,N)\) space with \(K\geq 0\), then the proof of Theorem
\ref{thm:master_G} can be significantly simplified. Indeed, in this
case one can prove that \({\rm BV}_{\Pi_{\rm G}}^\star(\X)
\subseteq{\rm BV}_{\Pi_{bs}}^\star(\X)\) and
\(|{\boldsymbol D}f|_{\Pi_{bs}}^\star(\X)\leq|{\boldsymbol D}f|_{\Pi
_{\rm G}}^\star(\X)\) for every \(f\in{\rm BV}_{\Pi_{\rm G}}^\star(\X)\)
in the following way. Given any \(f\in{\rm BV}_{\Pi_{\rm G}}^\star(\X)\)
and \(\eeta\in\Pi_{bs}\), let us just take the \(\infty\)-test plan
\(\ppi\in{\rm OptGeo}_\infty\big((\e_0)_\#\eeta,(\e_1)_\#\eeta\big)\)
satisfying both \({\rm Comp}(\ppi)\leq{\rm Comp}(\eeta)\)
and \({\rm Lip}(\ppi)\leq{\rm Lip}(\eeta)\), whose existence is
granted by Theorem \ref{thm:CD_are_BIP}. Then
\[\begin{split}
\int f(\gamma_1)-f(\gamma_0)\,\d\eeta(\gamma)&=
\int f(\gamma_1)-f(\gamma_0)\,\d\ppi(\gamma)\leq
{\rm Comp}(\ppi){\rm Lip}(\ppi)|{\boldsymbol D}f|_{\Pi_{\rm G}}^\star(\X)\\
&\leq{\rm Comp}(\eeta){\rm Lip}(\eeta)|{\boldsymbol D}f|_{\Pi_{\rm G}}^\star(\X),
\end{split}\]
thus proving that \(f\in{\rm BV}_{\Pi_{bs}}^\star(\X)\) and
\(|{\boldsymbol D}f|_{\Pi_{bs}}^\star(\X)\leq|{\boldsymbol D}f|_{\Pi_{\rm G}}^\star(\X)\).
\fr}\end{remark}

In the more general setting of non-branching \(\sf MCP\) spaces,
we can prove a weaker statement:
\begin{remark}[A weaker form of master geodesic plans
on \(\sf MCP\) spaces]\label{rmk:MCP}{\rm
Let \((\X,\sfd,\mm)\) be a non-branching \({\sf MCP}(K,N)\) metric
measure space with \(N<\infty\) (but without finitess assumptions
on \(\mm\)). We recall that in the non-branching case the
\({\sf MCP}(K,N)\) condition (standing for Measure Contraction
Property), which was introduced independently by Ohta \cite{Ohta07}
and Sturm \cite{Sturm06II}, is weaker than \({\sf CD}_q(K,N)\) for
any \(q\in(1,\infty)\).
By adapting our previous arguments, we can get
\({\rm BV}_{\Pi_{\rm G}}(\X)={\rm BV}(\X)\) and
\begin{equation}\label{eq:claim_MCP}
|{\boldsymbol D}f|_{\Pi_{\rm G}}(B)\leq|{\boldsymbol D}f|(B)\leq 2^N
|{\boldsymbol D}f|_{\Pi_{\rm G}}(B),\quad\text{ for every }f\in{\rm BV}(\X)\text{ and }B\subseteq\X\text{ Borel.}
\end{equation}
The first inequality is always verified.
Below we sketch the proof of the second inequality.

Fix a boundedly-supported \(\eeta\in\Pi_\infty(\X)\) and
\(q\in(1,\infty)\). Call \(\mu_i\coloneqq(\e_i)_\#\eeta\)
for \(i=0,1\). We know from \cite[Proposition 9.1]{CM16} that the
unique element \(\ppi_q\) of \({\rm OptGeo}_q(\mu_0,\mu_1)\) satisfies
\begin{equation}\label{eq:MCP_aux1}
\rho^q_t(\gamma_t)^{-\frac{1}{N}}\geq
\rho_0(\gamma_0)^{-\frac{1}{N}}\tau_{K,N}^{(1-t)}
\big(\sfd(\gamma_0,\gamma_1)\big),\quad\text{ for }
\ppi_q\text{-a.e.\ }\gamma\text{ and every }t\in[0,1),
\end{equation}
where we set \(\rho^q_t\mm\coloneqq(\e_t)_\#\ppi_q\).
We point out that \cite[Proposition 9.1]{CM16} concerns the case
\(q=2\), but the proof argument works for \(q\in(1,\infty)\)
arbitrary (recall that the \(\sf MCP\) condition is by definition
independent of \(q\)). By applying \eqref{eq:MCP_aux1} to the
`reversed-in-time' plan \(({\rm restr}_1^0)_\#\ppi_q\), which
is the unique element of \({\rm OptGeo}_q(\mu_1,\mu_0)\), we
obtain the symmetric estimate
\begin{equation}\label{eq:MCP_aux2}
\rho^q_t(\gamma_t)^{-\frac{1}{N}}\geq
\rho_1(\gamma_1)^{-\frac{1}{N}}\tau_{K,N}^{(t)}
\big(\sfd(\gamma_0,\gamma_1)\big),\quad\text{ for }
\ppi_q\text{-a.e.\ }\gamma\text{ and every }t\in(0,1].
\end{equation}
Define the function \(C\colon(0,+\infty)\to(0,+\infty)\)
as in the proof of Theorem \ref{thm:CD_are_BIP}. Then we have
\[
\rho^q_t(\gamma_t)^{\frac{1}{N}}\overset{\eqref{eq:MCP_aux1}}\leq
\frac{1}{1-t}\,C\big(\sfd(\gamma_0,\gamma_1)\big)^{\frac{1}{N}}
\rho_0(\gamma_0)^{\frac{1}{N}}\leq 2\,C\big(\sfd(\gamma_0,\gamma_1)
\big)^{\frac{1}{N}}{\rm Comp}(\eeta)^{\frac{1}{N}},
\]
for \(\ppi_q\)-a.e.\ \(\gamma\) and every \(t\in[0,1/2]\).
Similarly, we get \(\rho^q_t(\gamma_t)^{\frac{1}{N}}\leq
2\,C\big(\sfd(\gamma_0,\gamma_1)\big)^{\frac{1}{N}}
{\rm Comp}(\eeta)^{\frac{1}{N}}\) for \(\ppi_q\)-a.e.\ \(\gamma\)
and every \(t\in[1/2,1]\) by using \eqref{eq:MCP_aux2} in place
of \eqref{eq:MCP_aux1}. By arguing as in the last part of the proof
of Theorem \ref{thm:CD_are_BIP} (letting \(q\to\infty\) and using the
compactness result in Proposition \ref{prop:compactness_tp}),
we thus obtain a plan \(\ppi\in{\rm OptGeo}_\infty(\mu_0,\mu_1)\)
such that
\begin{equation}\label{eq:alt_est_MCP}
{\rm Comp}(\ppi)\leq 2^N C\big({\rm Lip}(\eeta)\big){\rm Comp}(\eeta).
\end{equation} 
Finally, by arguing as in the proof of Theorem \ref{thm:master_G},
but using \eqref{eq:alt_est_MCP} in place of \eqref{eq:est_Comp},
we conclude that
\(|{\boldsymbol D}f|(\Omega)=|{\boldsymbol D}f|_{\Pi_\infty}^\star
(\Omega)\leq 2^N|{\boldsymbol D}f|_{\Pi_{\rm G}}^\star(\Omega)\leq 2^N
|{\boldsymbol D}f|_{\Pi_{\rm G}}(\Omega)\) holds for every choice of
\(f\in{\rm BV}_{\Pi_{\rm G}}(\X)\) and \(\Omega\subseteq\X\) open,
whence \eqref{eq:claim_MCP} follows by outer regularity.
\fr}\end{remark}
\subsection{Master sequences of test plans on metric measure spaces}
\label{ss:master_seq}
Aim of this section is to prove that on any metric measure
space one can find a countable master family for BV.
On non-branching \(\sf CD\) spaces, the master sequence can
be required to consist of geodesic plans.
\begin{theorem}\label{thm:countable_master_tp}
Let \((\X,\sfd,\mm)\) be a metric measure space. Then there
exists an (at most) countable family \(\Pi\subseteq\Pi_\infty(\X)\)
of \(\infty\)-test plans on \((\X,\sfd,\mm)\) that is a master family
for \({\rm BV}(\X)\).

If \((\X,\sfd,\mm)\) is a non-branching \({\sf CD}(K,N)\)
space with \(K\in\R\), \(N\in(1,\infty)\), and \(\mm(\X)<+\infty\),
then we can additionally require that \(\Pi\subseteq\Pi_{\rm G}(\X)\).
\end{theorem}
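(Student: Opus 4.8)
The plan is to build the countable family as a union of countable narrowly-dense subsets of suitable \emph{narrowly compact} collections of test plans, and then to deduce the geodesic version by post-processing. The reduction that guides everything is the following: by Theorem \ref{thm:equiv_BV} one has \({\rm BV}(\X)={\rm BV}_{\Pi_\infty(\X)}^\star(\X)\) with equal total variations; for any \(\Pi\subseteq\Pi_\infty(\X)\) the inclusions \({\rm BV}(\X)\subseteq{\rm BV}_\Pi(\X)\subseteq{\rm BV}_\Pi^\star(\X)\) hold together with \(|{\boldsymbol D}f|_\Pi\leq|{\boldsymbol D}f|\) and \(|{\boldsymbol D}f|_\Pi^\star(\X)\leq|{\boldsymbol D}f|_\Pi(\X)\); and two finite Borel measures \(\mu\leq\nu\) with \(\mu(\X)=\nu(\X)\) must coincide. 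Chaining these, the entire statement follows once one shows, for the family \(\Pi\) to be constructed, that \({\rm BV}_\Pi^\star(\X)\subseteq{\rm BV}_{\Pi_\infty(\X)}^\star(\X)\) and \(|{\boldsymbol D}f|_{\Pi_\infty(\X)}^\star(\X)\leq|{\boldsymbol D}f|_\Pi^\star(\X)\) for every such \(f\); in particular the localized set functions \(|{\boldsymbol D}f|_\Pi^\star(\Omega)\) never need to be touched.

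\emph{The general metric measure case.} Fix \(x_0\in\X\) and, for rationals \(C,L>0\) and \(r\in\N\), set
\[
\mathcal K_{r,C,L}\coloneqq\big\{\ppi\in\Pi_\infty(\X)\,:\,{\rm Comp}(\ppi)\leq C,\ {\rm Lip}(\ppi)\leq L,\ [\ppi]\subseteq\bar B_r(x_0)\big\}.
\]
First I would verify that \(\mathcal K_{r,C,L}\) is narrowly compact: Proposition \ref{prop:compactness_tp} provides a narrowly convergent subsequence with \(\infty\)-test-plan limit, and the three constraints are preserved in the limit by lower semicontinuity of \({\rm Comp}\) and \({\rm Lip}\) (Remark \ref{rmk:cont_eval_meas} and Proposition \ref{prop:Mosco_conv_tp}) and by the portmanteau inequality on the closed ball \(\bar B_r(x_0)\) applied to each marginal \((\e_t)_\#\ppi\). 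As a compact metrizable space it is separable; I pick a countable narrowly dense \(D_{r,C,L}\subseteq\mathcal K_{r,C,L}\) and let \(\Pi\coloneqq\bigcup_{r,C,L}D_{r,C,L}\), a countable collection of boundedly supported \(\infty\)-test plans. The decisive point — where plain narrow density would be useless, since narrow convergence only yields lower bounds on \({\rm Comp},{\rm Lip}\) — is that any boundedly supported \(\eeta\in\Pi_\infty(\X)\) lies in some \(\mathcal K_{r,C,L}\) with \({\rm Comp}(\eeta)<C\) and \({\rm Lip}(\eeta)<L\) \emph{strictly}, so letting \(C\searrow{\rm Comp}(\eeta)\) and \(L\searrow{\rm Lip}(\eeta)\) furnishes \(\eeta_k\in\Pi\) with \(\eeta_k\rightharpoonup\eeta\), \(\limsup_k{\rm Comp}(\eeta_k)\leq{\rm Comp}(\eeta)\) and \(\limsup_k{\rm Lip}(\eeta_k)\leq{\rm Lip}(\eeta)\): narrow approximation with no asymptotic loss in either constant.

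\emph{\(\Pi\) is a master family.} Fix \(f\in{\rm BV}_\Pi^\star(\X)\), write \(C_0\coloneqq|{\boldsymbol D}f|_\Pi^\star(\X)\), and take an arbitrary \(\eeta\in\Pi_\infty(\X)\). Restricting \(\eeta\) to compact subsets of \(C([0,1],\X)\) of \(\eeta\)-measure \(\nearrow 1\) — such restrictions being boundedly supported, with \({\rm Comp}\) still finite, \({\rm Lip}\) not increased, and \(\int f(\gamma_1)-f(\gamma_0)\,\d(\cdot)\) convergent by dominated convergence (as \(f\circ\e_i\in L^1(\eeta)\) by bounded compression) — reduces matters to boundedly supported \(\eeta\). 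Picking \(\eeta_k\in\Pi\) as above and, for \(\varepsilon>0\), \(f_\varepsilon\in{\rm LIP}_{bs}(\X)\) with \(\|f-f_\varepsilon\|_{L^1(\mm)}\leq\varepsilon\), and using that \(\gamma\mapsto f_\varepsilon(\gamma_1)-f_\varepsilon(\gamma_0)\) is bounded and continuous, one estimates
\[
\begin{split}
\int f(\gamma_1)-f(\gamma_0)\,\d\eeta
&\leq\int f_\varepsilon(\gamma_1)-f_\varepsilon(\gamma_0)\,\d\eeta+2\,{\rm Comp}(\eeta)\varepsilon
=\lim_{k}\int f_\varepsilon(\gamma_1)-f_\varepsilon(\gamma_0)\,\d\eeta_k+2\,{\rm Comp}(\eeta)\varepsilon\\
&\leq\limsup_{k}\big({\rm Comp}(\eeta_k){\rm Lip}(\eeta_k)C_0+2\,{\rm Comp}(\eeta_k)\varepsilon\big)+2\,{\rm Comp}(\eeta)\varepsilon
\leq{\rm Comp}(\eeta){\rm Lip}(\eeta)C_0+4\,{\rm Comp}(\eeta)\varepsilon,
\end{split}
\]
where the middle step uses \(\eeta_k\in\Pi\) and the definition of \(C_0\), and the last one the two \(\limsup\)-bounds. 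Sending \(\varepsilon\searrow 0\) gives \(f\in{\rm BV}_{\Pi_\infty(\X)}^\star(\X)\) with \(|{\boldsymbol D}f|_{\Pi_\infty(\X)}^\star(\X)\leq C_0\), which by the opening reduction is exactly what is needed.

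\emph{The non-branching \({\sf CD}(K,N)\) case.} Here one cannot take a dense subset of \(\Pi_{\rm G}(\X)\), because \(\Pi_{\rm G}(\X)\) fails to be narrowly closed — the \(\infty\)-optimality of a dynamical plan is destroyed under narrow limits since \(W_\infty\) is merely lower semicontinuous — and this is the genuine obstacle; I would circumvent it by a polygonal interpolation. Starting from the countable \(\Pi=\{\eeta_k\}\) just produced (all boundedly supported), for each \(k,n\in\N\) Lemma \ref{lem:aux_interp} gives a polygonal interpolation \(\sigma_{k,n}\in\Pi_{\rm PG}(\X)\) of \(\eeta_k\) along the subdivision \(t_i=i/n\), with \({\rm Lip}(\sigma_{k,n})\leq{\rm Lip}(\eeta_k)\) and \({\rm Comp}(\sigma_{k,n})\leq C\big({\rm Lip}(\eeta_k)/n\big){\rm Comp}(\eeta_k)\), \(C\) being the profile function; set \(\tau_{k,n,i}\coloneqq({\rm restr}_{(i-1)/n}^{i/n})_\#\sigma_{k,n}\in\Pi_{\rm G}(\X)\) and \(\Pi'\coloneqq\{\tau_{k,n,i}:k,n\in\N,\ 1\leq i\leq n\}\subseteq\Pi_{\rm G}(\X)\), still countable. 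For \(f\in{\rm BV}_{\Pi'}^\star(\X)\) with constant \(C_0'\), telescoping along the subdivision and using \((\e_{t_i})_\#\sigma_{k,n}=(\e_{t_i})_\#\eeta_k\) yields, for every \(k,n\),
\[
\begin{split}
\int f(\gamma_1)-f(\gamma_0)\,\d\eeta_k&=\sum_{i=1}^{n}\int f(\gamma_1)-f(\gamma_0)\,\d\tau_{k,n,i}\\
&\leq\sum_{i=1}^{n}{\rm Comp}(\tau_{k,n,i}){\rm Lip}(\tau_{k,n,i})C_0'\leq C\big({\rm Lip}(\eeta_k)/n\big){\rm Comp}(\eeta_k){\rm Lip}(\eeta_k)C_0',
\end{split}
\]
where \({\rm Comp}(\tau_{k,n,i})\leq{\rm Comp}(\sigma_{k,n})\) and \({\rm Lip}(\tau_{k,n,i})\leq{\rm Lip}(\eeta_k)/n\) (Remark \ref{rmk:poly_interp_ineq}). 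Letting \(n\to\infty\) and invoking \(C({\rm L})\to 1\) as \({\rm L}\to 0\) gives \(f\in{\rm BV}_\Pi^\star(\X)\) with \(|{\boldsymbol D}f|_\Pi^\star(\X)\leq C_0'\); since \(\Pi\) is already master this forces \(f\in{\rm BV}(\X)\) with \(|{\boldsymbol D}f|(\X)\leq C_0'=|{\boldsymbol D}f|_{\Pi'}^\star(\X)\), and the opening reduction, now applied to \(\Pi'\), concludes.
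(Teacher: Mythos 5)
Your argument is correct. In the general case it is essentially the paper's proof: both stratify the test plans by rational bounds on \({\rm Comp}\) and \({\rm Lip}\), extract countable narrowly dense subsets, and exploit that a plan can be approximated \emph{from within its own stratum}, so that the approximating constants do not blow up, together with the narrow continuity of \(\ppi\mapsto\int f_\varepsilon(\gamma_1)-f_\varepsilon(\gamma_0)\,\d\ppi\) for Lipschitz \(f_\varepsilon\) close to \(f\) in \(L^1\). Your extra scaffolding (the trace constraint \([\ppi]\subseteq\bar B_r(x_0)\), the compactness of \(\mathcal K_{r,C,L}\), and the preliminary reduction to boundedly supported plans by restriction) is harmless but unnecessary: countable dense subsets exist in any subset of the separable metric space \(\big(\mathscr P(C([0,1],\X)),\sfd_{\mathscr P}\big)\), so neither compactness nor closedness of the strata is ever used. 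This matters for your diagnosis of the \({\sf CD}\) case: the claimed ``genuine obstacle'' that \(\Pi_{\rm G}(\X)\) is not narrowly closed is not an obstacle at all, since one only needs a countable set dense \emph{in} \(\Pi_{\rm G}(\X)\cap\{{\rm Comp}\leq\alpha,\,{\rm Lip}\leq\beta\}\), whose elements automatically lie in \(\Pi_{\rm G}(\X)\); this is exactly what the paper does, taking \(\Pi^{\sf m}=\Pi_{\rm G}(\X)\) (a master family by Theorem \ref{thm:master_G}) and thinning it. Your alternative for the \({\sf CD}\) case is nevertheless valid and genuinely different: you keep the countable family from the general case and replace each plan by the \(\Pi_{\rm G}(\X)\)-pieces of its polygonal interpolations from Lemma \ref{lem:aux_interp}, re-running the telescoping estimate \(\sum_i{\rm Comp}(\tau_{k,n,i}){\rm Lip}(\tau_{k,n,i})\leq C({\rm Lip}(\eeta_k)/n)\,{\rm Comp}(\eeta_k){\rm Lip}(\eeta_k)\) and letting \(n\to\infty\). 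What this buys is that you only need the interpolation machinery (Theorem \ref{thm:CD_are_BIP} and Lemma \ref{lem:aux_interp}) applied to countably many plans, and you bypass the full statement of Theorem \ref{thm:master_G}; what it costs is that you essentially redo Steps 1--2 of that theorem's proof inside the present argument, whereas the paper's route gets the geodesic refinement for free from the already-established master property of \(\Pi_{\rm G}(\X)\).
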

\begin{proof}
First, fix any master family \(\Pi^{\sf m}\) for \({\rm BV}(\X)\).
In particular, one can take \(\Pi^{\sf m}=\Pi_\infty(\X)\). In the
setting of non-branching \({\sf CD}(K,N)\) spaces with \(N,\mm\) finite,
we know from Theorem \ref{thm:master_G} that also the
choice \(\Pi^{\sf m}=\Pi_{\rm G}(\X)\) is allowed. Now define
\[
\Pi^{\sf m}_{\alpha,\beta}\coloneqq\Big\{\ppi\in\Pi^{\sf m}\,:\,
{\rm Comp}(\ppi)\leq\alpha,\,{\rm Lip}(\ppi)\leq\beta\Big\},
\quad\text{ for every }\alpha,\beta\in\mathbb Q^+.
\]
Note that \(\Pi^{\sf m}=\bigcup_{\alpha,\beta\in\mathbb Q^+}
\Pi^{\sf m}_{\alpha,\beta}\). Given any \(\alpha,\beta\in\mathbb Q^+\),
we select a countable set \(\mathcal C_{\alpha,\beta}
\subseteq\Pi^{\sf m}_{\alpha,\beta}\) that is dense in
\(\Pi^{\sf m}_{\alpha,\beta}\) with respect to the narrow topology.
Consider the countable family
\[
\Pi\coloneqq\bigcup_{\alpha,\beta\in\mathbb Q^+}\mathcal C_{\alpha,\beta}
\subseteq\Pi^{\sf m}.
\]
We aim to show that \(\Pi\) fulfills the requirements. Given that
\({\rm BV}_{\Pi^{\sf m}}^\star(\X)={\rm BV}(\X)\subseteq{\rm BV}_\Pi(\X)\)
and \(|{\boldsymbol D}f|_\Pi\leq|{\boldsymbol D}f|\) for every
\(f\in{\rm BV}(\X)\), it is sufficient to prove that
\({\rm BV}_\Pi(\X)\subseteq{\rm BV}_{\Pi^{\sf m}}^\star(\X)\) and
\(|{\boldsymbol D}f|_{\Pi^{\sf m}}^\star(\X)\leq|{\boldsymbol D}f|_\Pi(\X)\)
for every \(f\in{\rm BV}_\Pi(\X)\), which amounts to showing that
\begin{equation}\label{eq:countable_master_tp_aux}
\int f(\gamma_1)-f(\gamma_0)\,\d\ppi(\gamma)\leq
{\rm Comp}(\ppi){\rm Lip}(\ppi)|{\boldsymbol D}f|_\Pi(\X),\quad
\text{ for every }f\in{\rm BV}_\Pi(\X)\text{ and }\ppi\in\Pi^{\sf m}.
\end{equation}
To this aim, let \(f\in{\rm BV}_\Pi(\X)\) and \(\ppi\in\Pi^{\sf m}\)
be fixed. Pick two sequences \((\alpha_n)_{n\in\N},(\beta_n)_{n\in\N}
\subseteq\mathbb Q^+\) satisfying \(\alpha_n\searrow{\rm Comp}(\ppi)\)
and \(\beta_n\searrow{\rm Lip}(\ppi)\). For any \(n\in\N\)
we have that \(\ppi\in\Pi^{\sf m}_{\alpha_n,\beta_n}\),
so that we can find a plan \(\ppi_n\in\mathcal C_{\alpha_n,\beta_n}
\subseteq\Pi\) such that \(\sfd_{\mathscr P}(\ppi_n,\ppi)\leq 1/n\).
This means that \(\ppi_n\rightharpoonup\ppi\) as \(n\to\infty\),
so that in particular \((\e_0)_\#\ppi_n\rightharpoonup(\e_0)_\#\ppi\)
and \((\e_1)_\#\ppi_n\rightharpoonup(\e_1)_\#\ppi\) by Remark
\ref{rmk:cont_eval_meas}. Given any \(\varepsilon>0\),
we can find a function \(f_\varepsilon\in C_{bs}(\X)\) such that
\(\|f-f_\varepsilon\|_{L^1(\mm)}\leq\varepsilon\). Using
the fact that \(\sup_{n\in\N}{\rm Comp}(\ppi_n)\leq\alpha_1\),
we can thus estimate
\[\begin{split}
&\bigg|\int f(\gamma_1)-f(\gamma_0)\,\d\ppi(\gamma)-
\int f(\gamma_1)-f(\gamma_0)\,\d\ppi_n(\gamma)\bigg|\\
\leq\,&\bigg|\int f_\varepsilon(\gamma_1)-f_\varepsilon(\gamma_0)
\,\d\ppi(\gamma)-\int f_\varepsilon(\gamma_1)-f_\varepsilon(\gamma_0)
\,\d\ppi_n(\gamma)\bigg|+4\alpha_1\varepsilon\\
\leq\,&\bigg|\int f_\varepsilon\,\d(\e_1)_\#\ppi-
\int f_\varepsilon\,\d(\e_1)_\#\ppi_n\bigg|+
\bigg|\int f_\varepsilon\,\d(\e_0)_\#\ppi-
\int f_\varepsilon\,\d(\e_0)_\#\ppi_n\bigg|+4\alpha_1\varepsilon.
\end{split}\]
By first letting \(n\to\infty\) and then \(\varepsilon\searrow 0\),
we deduce that
\[
\int f(\gamma_1)-f(\gamma_0)\,\d\ppi_n(\gamma)
\to\int f(\gamma_1)-f(\gamma_0)\,\d\ppi(\gamma),
\quad\text{ as }n\to\infty.
\]
Therefore, we can eventually conclude that
\[\begin{split}
\int f(\gamma_1)-f(\gamma_0)\,\d\ppi(\gamma)&=
\lim_{n\to\infty}\int f(\gamma_1)-f(\gamma_0)\,\d\ppi_n(\gamma)\\
&\leq\limi_{n\to\infty}\int|D(f\circ\gamma)|([0,1])\,\d\ppi_n(\gamma)\\
&=\limi_{n\to\infty}\int\gamma_\#|D(f\circ\gamma)|(\X)\,\d\ppi_n(\gamma)\\
&\leq\limi_{n\to\infty}{\rm Comp}(\ppi_n){\rm Lip}(\ppi_n)
|{\boldsymbol D}f|_\Pi(\X)\\
&\leq\lim_{n\to\infty}\alpha_n\beta_n|{\boldsymbol D}f|_\Pi(\X)\\
&={\rm Comp}(\ppi){\rm Lip}(\ppi)|{\boldsymbol D}f|_\Pi(\X).
\end{split}\]
This gives the desired inequality in \eqref{eq:countable_master_tp_aux},
thus yielding the sought conclusion.
\end{proof}
\begin{remark}{\rm
Actually, the proof of Theorem \ref{thm:countable_master_tp}
shows also the following statement: from any master family
\(\Pi\) for \({\rm BV}(\X)\), one can extract a countable subfamily
\(\tilde\Pi\subseteq\Pi\) that is still a master family for
\({\rm BV}(\X)\).
\fr}\end{remark}

We expect that, since the defining property \eqref{def:BV} of
\({\rm BV}(\X)\) space is in `integral form', it is not suitable to improve Theorem
\ref{thm:countable_master_tp} and obtain a single master plan
for \({\rm BV}(\X)\), \emph{i.e.}, a master family that is a singleton. 
However, in Section \ref{s:curvewise_BV}
we will deal with a `curvewise' definition of \({\rm BV}(\X)\),
which will allow us to build a single test plan (concentrated on
geodesics in the non-branching \(\sf CD\) case) that is a master
plan for \({\rm BV}(\X)\) in the curvewise sense.
\section{Curvewise BV space}\label{s:curvewise_BV}
As we already saw during the paper, a smorgasbord
of different (and mostly equivalent) notions of BV space over
metric measure spaces have been thoroughly studied in the literature.
In this section we propose yet another definition of BV space,
which we will refer to as the curvewise BV space.
In Subsection \ref{ss:def_BVcw} we introduce it and prove its
equivalence with \({\rm BV}(\X)\). In Subsection \ref{ss:master_cw_tp}
we show that Theorem \ref{thm:countable_master_tp} implies the existence
of a single \(\infty\)-test plan which is a master test plan in the
curvewise BV sense.
\subsection{Definition of \texorpdfstring{\({\rm BV}^{\sf cw}(\X)\)}
{BVcw} and its main properties}\label{ss:def_BVcw}
Here we introduce a new notion of function of bounded variation
on a metric measure space: the \emph{curvewise BV space}, which
we shall denote by \({\rm BV}^{\sf cw}(\X)\). Our definition is
heavily inspired by the so-called \emph{AM-BV space}, which we
are going to recall briefly. The potential-theoretic notion of
\emph{approximation modulus}, AM-modulus for short, has been
recently introduced by O.\ Martio in \cite{Martio16}. By building
on top of it, he constructed in \cite{Martio16-2} a `Newtonian-type'
version of BV space, denoted by \({\rm BV}_{AM}(\X)\).

Much like an \(L^p\)-function belongs to the Newtonian--Sobolev space
\(N^{1,p}(\X)\) provided it satisfies the weak upper gradient
inequality along \({\rm Mod}_p\)-a.e.\ path (where \({\rm Mod}_p\)
stands for the \(p\)-modulus), an \(L^1\)-function is declared
to be in \({\rm BV}_{AM}(\X)\) provided it satisfies the
\emph{\(BV_{AM}\) upper bound} inequality
(cf.\ \cite[Eq.\ (2.6)]{Martio16-2}) along \(AM\)-a.e.\ path.
Our strategy is the following: to replace
the quantification `along \(AM\)-a.e.\ path' by `along
\(\ppi\)-a.e.\ curve, for every \(\infty\)-test plan \(\ppi\)'.
Technically speaking, to do so we first need to slightly adapt
the concept of \(BV_{AM}\) upper bound, thus introducing that of
\emph{curvewise bound}, see Definition \ref{def:cw_bound}.
The resulting function space \({\rm BV}^{\sf cw}(\X)\)
(cf.\ Definition \ref{def:BV_cw}) is a priori larger than
\({\rm BV}_{AM}(\X)\), the reason being that \(AM\)-null sets are
\(\ppi\)-null for every \(\infty\)-test plan \(\ppi\)
(see Lemma \ref{lem:null_AM_vs_tp}). We will prove
in Theorem \ref{thm:equiv_BV_cw} (see also Corollary
\ref{cor:equiv_BV_cw}) that \({\rm BV}^{\sf cw}(\X)={\rm BV}(\X)\)
on every metric measure space.
Furthermore, we will prove in Appendix \ref{app:AM-BV}
that also \({\rm BV}_{AM}(\X)={\rm BV}(\X)\), and thus
\({\rm BV}_{AM}(\X)={\rm BV}^{\sf cw}(\X)\),
are verified on every metric measure space.
\begin{definition}[Curvewise \(\Pi\)-bound]\label{def:cw_bound}
Let \((\X,\sfd,\mm)\) be a metric measure space.
Fix \(f\in L^1(\mm)\) and a family \(\Pi\) of \(\infty\)-test plans
on \((\X,\sfd,\mm)\). Let \(\Omega\subseteq\X\) be an open set. Then
a given sequence \((g_n)_{n\in\N}\subseteq L^1(\mm|_\Omega)\)
of non-negative functions is said to be a curvewise \(\Pi\)-bound
for \(f\) on \(\Omega\) provided for any \(\ppi\in\Pi\) the following
property is verified: for \(\ppi\)-a.e.\ curve \(\gamma\), it holds that
\begin{equation}\label{eq:def_BV_bound}
|D(f\circ\gamma)|((a,b))\leq\limi_{n\to\infty}
\int_a^b g_n(\gamma_t)|\dot\gamma_t|\,\d t,
\quad\text{ for every }0<a<b<1\text{ with }\gamma((a,b))\subseteq\Omega.
\end{equation}
When \(\Pi\) is the totality of \(\infty\)-test plans
on \((\X,\sfd,\mm)\), we just speak about curvewise bounds.
\end{definition}

The bounded compressibility assumption on the plans \(\ppi\)
ensures that \eqref{eq:def_BV_bound} is well-posed, since it is
independent of the chosen Borel representatives of \(f\) and
\((g_n)_{n\in\N}\). By building on top of the above notion of curvewise
\(\Pi\)-bound, we propose a new space of functions having bounded
\(\Pi\)-variation, which we call the curvewise \(\Pi\)-BV space and
we denote it by \({\rm BV}_\Pi^{\sf cw}(\X)\).
\begin{definition}[Curvewise \(\Pi\)-BV space]\label{def:BV_cw}
Let \((\X,\sfd,\mm)\) be a metric measure space. Let \(\Pi\) be a family
of \(\infty\)-test plans on \((\X,\sfd,\mm)\). Then we denote by
\({\rm BV}^{\sf cw}_\Pi(\X)\) the space of all those functions
\(f\in L^1(\mm)\) which admit a curvewise \(\Pi\)-bound \((g_n)_n\)
on \(\X\) with \(\sup_n\|g_n\|_{L^1(\mm)}<\infty\).
Given any \(f\in{\rm BV}^{\sf cw}_\Pi(\X)\) and \(\Omega\subseteq\X\)
open, we define
\begin{equation}\label{eq:def_curvewise_BV}
|{\boldsymbol D}f|^{\sf cw}_\Pi(\Omega)\coloneqq
\inf_{(g_n)_n}\limi_{n\to\infty}\int_\Omega g_n\,\d\mm,
\end{equation}
where the infimum is taken among all curvewise \(\Pi\)-bounds
\((g_n)_n\) for \(f\) on \(\Omega\). When \(\Pi\) is the totality
of \(\infty\)-test plans on \((\X,\sfd,\mm)\), we use the shorthand
notation \({\rm BV}^{\sf cw}(\X)\) and \(|{\boldsymbol D}f|^{\sf cw}\).
When \(\Pi=\{\ppi\}\) is a singleton, we use the shorthand notation
\({\rm BV}_\sppi^{\sf cw}(\X)\) and \(|{\boldsymbol D}f|_\sppi^{\sf cw}\).
\end{definition}
Observe that \({\rm BV}^{\sf cw}_\Pi(\X)\) can be equivalently
characterised as the set of all \(f\in L^1(\mm)\)
for which the quantity in the right-hand side of
\eqref{eq:def_curvewise_BV} (with \(\Omega=\X\)) is finite.
Moreover, given any
\(f\in{\rm BV}_\Pi^{\sf cw}(\X)\), we can extend the
function \(\Omega\mapsto|{\boldsymbol D}f|_\Pi^{\sf cw}(\Omega)\)
introduced in \eqref{eq:def_curvewise_BV}
to a set-function defined on all Borel sets via Carath\'{e}odory
construction, in the following way:
\begin{equation}\label{eq:ext_ptse_tv}
|{\boldsymbol D}f|_\Pi^{\sf cw}(B)\coloneqq\inf\big\{|{\boldsymbol D}f|
_\Pi^{\sf cw}(\Omega)\,:\,\Omega\subseteq\X\text{ open},\,B\subseteq
\Omega\big\},\quad\text{ for every }B\subseteq\X\text{ Borel.}
\end{equation}
We will not check whether the set-function
\(|{\boldsymbol D}f|_\Pi^{\sf cw}\) in \eqref{eq:ext_ptse_tv}
actually defines a Borel measure on \((\X,\sfd)\) when \(\Pi\) is an arbitrary family of
\(\infty\)-test plans. However, this is the case in the specific
situation where \(\Pi\) is a master family for \({\rm BV}(\X)\),
as granted by the following result.
\begin{theorem}\label{thm:equiv_BV_cw}
Let \((\X,\sfd,\mm)\) be a metric measure space. Let \(\Pi\) be a
family of \(\infty\)-test plans on \((\X,\sfd,\mm)\). Then \({\rm BV}(\X)
\subseteq{\rm BV}_\Pi^{\sf cw}(\X)\subseteq{\rm BV}_\Pi(\X)\) and
\begin{equation}\label{eq:equiv_BV_cw_claim}
|{\boldsymbol D}f|_\Pi(\Omega)\leq|{\boldsymbol D}f|_\Pi^{\sf cw}(\Omega)
\leq|{\boldsymbol D}f|(\Omega),\quad\text{ for every }f\in{\rm BV}(\X)
\text{ and }\Omega\subseteq\X\text{ open.}
\end{equation}
Moreover, if \(\Pi\) is a master family for \({\rm BV}(\X)\),
then \({\rm BV}_\Pi^{\sf cw}(\X)={\rm BV}(\X)\) and
\(|{\boldsymbol D}f|_\Pi^{\sf cw}=|{\boldsymbol D}f|\) holds
for every \(f\in{\rm BV}(\X)\), thus in particular
\(|{\boldsymbol D}f|_\Pi^{\sf cw}\) is a finite Borel measure on
\((\X,\sfd)\).
\end{theorem}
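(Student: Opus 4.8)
The plan is to prove the two inclusions ${\rm BV}(\X)\subseteq{\rm BV}_\Pi^{\sf cw}(\X)\subseteq{\rm BV}_\Pi(\X)$ together with the open-set estimates in \eqref{eq:equiv_BV_cw_claim}; the ``moreover'' part will then follow by a formal squeezing. For the inclusion ${\rm BV}(\X)\subseteq{\rm BV}_\Pi^{\sf cw}(\X)$ and the bound $|{\boldsymbol D}f|_\Pi^{\sf cw}(\Omega)\leq|{\boldsymbol D}f|(\Omega)$, I would use the approximation statement in the last part of Theorem~\ref{thm:equiv_BV}: given $f\in{\rm BV}(\X)$ and an open set $\Omega$, pick $(f_n)_n\subseteq\LIP_{\loc}(\Omega)\cap L^1(\mm|_\Omega)$ with $f_n\to f$ in $L^1(\mm|_\Omega)$ and $\int_\Omega\lip_a(f_n)\,\d\mm\to|{\boldsymbol D}f|(\Omega)$, and --- after passing to a subsequence, which does not change the last limit --- arrange $\|f_n-f\|_{L^1(\mm|_\Omega)}\leq 2^{-n}$. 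I claim $g_n\coloneqq\lip_a(f_n)$ is a curvewise $\Pi$-bound for $f$ on $\Omega$. Indeed, for any $\ppi\in\Pi$ the bounded compression $(\e_t)_\#\ppi\leq{\rm Comp}(\ppi)\mm$ gives $\int\!\!\int_0^1|f_n-f|(\gamma_t)\1_\Omega(\gamma_t)\,\d t\,\d\ppi(\gamma)\leq{\rm Comp}(\ppi)2^{-n}$, so the series $\sum_n\int_0^1|f_n-f|(\gamma_t)\1_\Omega(\gamma_t)\,\d t$ is $\ppi$-integrable, hence finite for $\ppi$-a.e.\ $\gamma$; for such a $\gamma$ and $0<a<b<1$ with $\gamma((a,b))\subseteq\Omega$ one gets $f_n\circ\gamma\to f\circ\gamma$ in $L^1(a,b)$, and the standard chain-rule bound $|D(f_n\circ\gamma)|((a,b))\leq\int_a^b\lip_a(f_n)(\gamma_t)|\dot\gamma_t|\,\d t$ combined with lower semicontinuity of the one-dimensional total variation yields \eqref{eq:def_BV_bound}. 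Taking $\Omega=\X$ shows $f\in{\rm BV}_\Pi^{\sf cw}(\X)$, and since $\liminf_n\int_\Omega g_n\,\d\mm=|{\boldsymbol D}f|(\Omega)$ for every open $\Omega$, the definition of $|{\boldsymbol D}f|_\Pi^{\sf cw}$ gives the upper estimate.

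For the reverse inclusion and $|{\boldsymbol D}f|_\Pi(\Omega)\leq|{\boldsymbol D}f|_\Pi^{\sf cw}(\Omega)$, I would start from a curvewise $\Pi$-bound $(g_n)_n$ for $f$ on $\X$ with $\sup_n\|g_n\|_{L^1(\mm)}<\infty$. Fixing $\ppi\in\Pi$: integrating \eqref{eq:def_BV_bound} against $\ppi$ and using Fatou with $|\dot\gamma_t|\leq{\rm Lip}(\ppi)$ shows $\liminf_n\int_0^1 g_n(\gamma_t)\,\d t<\infty$ for $\ppi$-a.e.\ $\gamma$; letting the test interval exhaust $(0,1)$ then gives $|D(f\circ\gamma)|((0,1))<\infty$, i.e.\ $f\circ\gamma\in{\rm BV}(0,1)$ (item i) of Definition~\ref{def:BV}), while Lemma~\ref{lem:bdry_cond_BV} makes $\{0\},\{1\}$ negligible for $|D(f\circ\gamma)|$. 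Now fix an open $\Omega$ and a curvewise $\Pi$-bound $(g_n)_n$ for $f$ on $\Omega$; for $\ppi$-a.e.\ $\gamma$, decomposing the open set $\gamma^{-1}(\Omega)\cap(0,1)$ into its components $(a_k,b_k)$, applying \eqref{eq:def_BV_bound} on each (letting $a_k\downarrow 0$ or $b_k\uparrow 1$ when an endpoint is an accumulation point, using the boundary fact), and summing via $\sum_k\liminf_n c_{k,n}\leq\liminf_n\sum_k c_{k,n}$, one obtains $\gamma_\#|D(f\circ\gamma)|(\Omega)\leq\liminf_n\int_0^1\1_\Omega(\gamma_t)g_n(\gamma_t)|\dot\gamma_t|\,\d t$. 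Integrating in $\ppi$, using $|\dot\gamma|\leq{\rm Lip}(\ppi)$, Fatou, and $(\e_t)_\#\ppi\leq{\rm Comp}(\ppi)\mm$, and then taking the infimum over the bounds, gives
\[
\int\gamma_\#|D(f\circ\gamma)|(\Omega)\,\d\ppi\leq{\rm Comp}(\ppi){\rm Lip}(\ppi)\,|{\boldsymbol D}f|_\Pi^{\sf cw}(\Omega),\qquad\text{for all }\ppi\in\Pi,\ \Omega\subseteq\X\text{ open.}
\]

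To turn this open-set estimate into the membership $f\in{\rm BV}_\Pi(\X)$ and the inequality, I would verify that the set function $\Omega\mapsto|{\boldsymbol D}f|_\Pi^{\sf cw}(\Omega)$ is nondecreasing, vanishes on $\emptyset$, is superadditive on disjoint open sets (restrict a bound on $\Omega_1\sqcup\Omega_2$ to each piece), and is moreover finitely subadditive and inner regular; by the De Giorgi--Letta criterion its Carath\'eodory extension \eqref{eq:ext_ptse_tv} is then a Borel measure, finite since $|{\boldsymbol D}f|_\Pi^{\sf cw}(\X)<\infty$. Because a finite Borel measure on a metric space is outer regular, and $B\mapsto\gamma_\#|D(f\circ\gamma)|(B)$ is a pushforward of a finite measure, the displayed estimate extends from open to all Borel sets, so $\mu\coloneqq|{\boldsymbol D}f|_\Pi^{\sf cw}$ realises \eqref{eq:def_BV}, whence $f\in{\rm BV}_\Pi(\X)$ and, by minimality of $|{\boldsymbol D}f|_\Pi$, also $|{\boldsymbol D}f|_\Pi\leq|{\boldsymbol D}f|_\Pi^{\sf cw}$. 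I expect the main obstacle to be precisely the verification that $|{\boldsymbol D}f|_\Pi^{\sf cw}$ is finitely subadditive --- where one must deal with intervals whose $\gamma$-image straddles $\Omega_1$ and $\Omega_2$, again via the component decomposition of $\gamma^{-1}(\Omega_i)$ --- and inner regular; everything else is routine bookkeeping.

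Finally, the ``moreover'' part is formal. If $\Pi$ is a master family then ${\rm BV}_\Pi(\X)={\rm BV}(\X)$, so the inclusions already proved squeeze ${\rm BV}_\Pi^{\sf cw}(\X)={\rm BV}(\X)$; and since $|{\boldsymbol D}f|_\Pi=|{\boldsymbol D}f|$, the two-sided bound \eqref{eq:equiv_BV_cw_claim} squeezes $|{\boldsymbol D}f|_\Pi^{\sf cw}(\Omega)=|{\boldsymbol D}f|(\Omega)$ for every open $\Omega$. By \eqref{eq:ext_ptse_tv} and outer regularity of the finite measure $|{\boldsymbol D}f|$, this identity propagates to every Borel set, so $|{\boldsymbol D}f|_\Pi^{\sf cw}=|{\boldsymbol D}f|$ as set functions; in particular $|{\boldsymbol D}f|_\Pi^{\sf cw}$ is a finite Borel measure on $(\X,\sfd)$.
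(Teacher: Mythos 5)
Your first two steps and the final squeezing argument reproduce the paper's proof essentially verbatim: the same use of the approximation statement in Theorem \ref{thm:equiv_BV} together with a subsequence extraction giving $f_n\circ\gamma\to f\circ\gamma$ in $L^1$ along $\ppi$-a.e.\ curve and lower semicontinuity of the one-dimensional variation for the inclusion \({\rm BV}(\X)\subseteq{\rm BV}_\Pi^{\sf cw}(\X)\); the same decomposition of \(\gamma^{-1}(\Omega)\cap(0,1)\) into its components, with Lemma \ref{lem:bdry_cond_BV} handling the endpoints and Fatou applied twice, for the reverse inclusion; and the same propagation of \(|{\boldsymbol D}f|_\Pi^{\sf cw}(\Omega)=|{\boldsymbol D}f|(\Omega)\) from open to Borel sets via \eqref{eq:ext_ptse_tv} and outer regularity in the master-family case.

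Where you diverge is the De Giorgi--Letta programme in your third paragraph, and this is the weak point. First, it proves more than the theorem asserts: measurability of \(|{\boldsymbol D}f|_\Pi^{\sf cw}\) is only claimed when \(\Pi\) is a master family, where it comes for free from the squeeze against the genuine measure \(|{\boldsymbol D}f|\); the paper explicitly declines, in the paragraph preceding the statement, to decide whether \(|{\boldsymbol D}f|_\Pi^{\sf cw}\) is a measure for arbitrary \(\Pi\). Second, and more seriously, you defer exactly the two non-trivial ingredients of that programme. Finite subadditivity can be salvaged by splitting \(|D(f\circ\gamma)|\) over \(\gamma^{-1}(\Omega_1)\) and \(\gamma^{-1}(\Omega_2)\) and using superadditivity of the \(\liminf\), but inner regularity of \(\Omega\mapsto|{\boldsymbol D}f|_\Pi^{\sf cw}(\Omega)\) is not ``routine bookkeeping'': each open set in an exhaustion of \(\Omega\) carries its own near-optimal \emph{sequence} of curvewise bounds, and these must be glued into a single sequence on \(\Omega\) without increasing the total mass --- a cut-off-plus-diagonalisation argument you neither carry out nor reduce to anything available in the paper. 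As written this step is a gap. It is also avoidable: the only place a finite Borel measure must genuinely be exhibited is item ii) of Definition \ref{def:BV} (a point on which the paper itself is terse), and for the quantitative claim \eqref{eq:equiv_BV_cw_claim}, which concerns only \(f\in{\rm BV}(\X)\), the measure \(\mu=|{\boldsymbol D}f|\) is an admissible competitor in \eqref{eq:def_BV}, by your Step 1 combined with outer regularity of the finite measures \(B\mapsto\int\gamma_\#|D(f\circ\gamma)|(B)\,\d\ppi(\gamma)\); no regularity of \(|{\boldsymbol D}f|_\Pi^{\sf cw}\) itself is required there.
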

\begin{proof}
\ \\
{\color{blue}\textsc{Step 1.}} First of all, we aim to show that
\({\rm BV}(\X)\subseteq{\rm BV}_\Pi^{\sf cw}(\X)\) and
\begin{equation}\label{eq:equiv_BV_aux1}
|{\boldsymbol D}f|_\Pi^{\sf cw}(\Omega)\leq|{\boldsymbol D}f|(\Omega),
\quad\text{ for every }f\in{\rm BV}(\X)\text{ and }\Omega\subseteq\X
\text{ open.}
\end{equation}
To prove it, pick any sequence
\((f_n)_{n\in\N}\subseteq{\rm LIP}_{loc}(\Omega)\cap L^1(\mm|_\Omega)\)
such that \(f_n\to f\) in \(L^1(\mm|_\Omega)\) and
\(\int_\Omega\lip_a(f_n)\,\d\mm\to|{\boldsymbol D}f|(\Omega)\),
whose existence is granted by Theorem \ref{thm:equiv_BV}. Fix any
\(\ppi\in\Pi\). Choose Borel functions \(\bar f\colon\X\to\R\) and
\(\bar f_n\colon\X\to\R\), \(n\in\N\), having the following properties:
\(\bar f=f\) holds \(\mm\)-a.e.\ on \(\Omega\), \(\bar f_n=f_n\)
on \(\Omega\) for every \(n\in\N\), \(\bar f_n=\bar f\) on
\(\X\setminus\Omega\) for every \(n\in\N\), and
\(\int_{\X\setminus\Omega}|\bar f|\,\d\mm<+\infty\). In particular,
it holds that \(\bar f_n\to\bar f\) in \(L^1(\mm)\). Now observe that
\begin{equation}\label{eq:equiv_BV_aux2}
\bar f_n\circ\gamma\to\bar f\circ\gamma\;\text{ in }L^1(0,1),
\quad\text{ for }\ppi\text{-a.e.\ }\gamma,
\end{equation}
possibly after passing to a (not relabelled) subsequence in \(n\).
Indeed, we can estimate
\[\begin{split}
\int\!\!\!\int_0^1\big|(\bar f_n\circ\gamma)(t)-(\bar f\circ\gamma)(t)\big|
\,\d t\,\d\ppi(\gamma)&=\int\!\!\!\int_0^1|\bar f_n-\bar f|\circ\e_t\,\d t\,\d\ppi\\
&\leq{\rm Comp}(\ppi)\int|\bar f_n-\bar f|\,\d\mm\to 0,
\quad\text{ as }n\to\infty,
\end{split}\]
thus up to a not relabelled subsequence we have that
\(\int_0^1\big|(\bar f_n\circ\gamma)(t)-(\bar f\circ\gamma)(t)
\big|\,\d t\to 0\) as \(n\to\infty\) for \(\ppi\)-a.e.\ \(\gamma\),
so that accordingly the claimed property \eqref{eq:equiv_BV_aux2}
is proven.

Now pick a \(\ppi\)-null Borel set \(N\) of curves where the property
in \eqref{eq:equiv_BV_aux2} fails. Fix any \(\gamma\notin N\) and
\(0<a<b<1\) such that \(\gamma((a,b))\subseteq\Omega\). Thanks to the
lower semicontinuity and the locality of the total variation measures,
we obtain that
\[\begin{split}
|D(f\circ\gamma)|((a,b))&=|D(\bar f\circ\gamma)|((a,b))\leq
\limi_{n\to\infty}|D(\bar f_n\circ\gamma)|((a,b))=
\limi_{n\to\infty}\int_a^b|(\bar f_n\circ\gamma)'_t|\,\d t\\
&\leq\limi_{n\to\infty}\int_a^b\lip_a(\bar f_n)(\gamma_t)|\dot\gamma_t|\,\d t
\leq\limi_{n\to\infty}\int_a^b\big(\1_\Omega\,\lip_a(f_n)\big)(\gamma_t)|\dot\gamma_t|\,\d t.
\end{split}\]
This shows that \(\big(\1_\Omega\,\lip_a(f_n)\big)_n\)
is a curvewise \(\Pi\)-bound for \(f\) on \(\Omega\).
When considering \(\Omega=\X\), we get that
\(f\in{\rm BV}_\Pi^{\sf cw}(\X)\). The same estimates also give
\[
|{\boldsymbol D}f|_\Pi^{\sf cw}(\Omega)\leq\lim_{n\to\infty}
\int\1_\Omega\,\lip_a(f_n)\,\d\mm=|{\boldsymbol D}f|(\Omega),
\]
whence \eqref{eq:equiv_BV_aux1} follows.\\
{\color{blue}\textsc{Step 2.}} Next we claim that
\({\rm BV}_\Pi^{\sf cw}(\X)\subseteq{\rm BV}_\Pi(\X)\) and
\begin{equation}\label{eq:equiv_BV_aux3}
|{\boldsymbol D}f|_\Pi(\Omega)\leq|{\boldsymbol D}f|_\Pi^{\sf cw}(\Omega),
\quad\text{ for every }f\in{\rm BV}_\Pi^{\sf cw}(\X)\text{ and }
\Omega\subseteq\X\text{ open.}
\end{equation}
In order to prove it, let \(\ppi\in\Pi\) and \(\varepsilon>0\)
be fixed. We can find a curvewise \(\Pi\)-bound \((g_n)_n\) for
\(f\) on \(\Omega\) satisfying \(\limi_n\int_\Omega g_n\,\d\mm\leq
|{\boldsymbol D}f|_\Pi^{\sf cw}(\Omega)+\varepsilon\). Thanks to
Lemma \ref{lem:bdry_cond_BV}, there exists a \(\ppi\)-null Borel
set \(N\) of curves such that \(f\circ\gamma\in{\rm BV}(0,1)\) and
\(|D(f\circ\gamma)|(\{0,1\})=0\) for all \(\gamma\notin N\).
Given any \(\gamma\notin N\), we can find \(N_\gamma\in\N\cup\{\infty\}\)
and \(\{a_i^\gamma\}_{i<N_\gamma},\{b_i^\gamma\}_{i<N_\gamma}
\subseteq[0,1]\) such that
\begin{equation}\label{eq:equiv_BV_aux4}
a_i^\gamma<b_i^\gamma<a_{i+1}^\gamma<b_{i+1}^\gamma,\qquad
\gamma^{-1}(\Omega)\cap(0,1)=\bigcup_{i<N_\gamma}(a_i^\gamma,b_i^\gamma).
\end{equation}
Therefore, for any \(\gamma\notin N\) we may estimate
\[\begin{split}
\gamma_\#|D(f\circ\gamma)|(\Omega)&=
|D(f\circ\gamma)|\big(\gamma^{-1}(\Omega)\cap(0,1)\big)
\overset{\eqref{eq:equiv_BV_aux4}}=
\sum_{i<N_\gamma}|D(f\circ\gamma)|((a_i^\gamma,b_i^\gamma))\\
&\leq\sum_{i<N_\gamma}\limi_{n\to\infty}
\int_{a_i^\gamma}^{b_i^\gamma}g_n(\gamma_t)|\dot\gamma_t|\,\d t
\overset{\star}\leq\limi_{n\to\infty}\sum_{i<N_\gamma}
\int_{a_i^\gamma}^{b_i^\gamma}g_n(\gamma_t)|\dot\gamma_t|\,\d t\\
&=\limi_{n\to\infty}\int_{\gamma^{-1}(\Omega)}g_n(\gamma_t)|\dot\gamma_t|\,\d t,
\end{split}\]
where the starred inequality is granted by Fatou's lemma.
By integrating over \(\ppi\), we obtain
\[\begin{split}
\int\gamma_\#|D(f\circ\gamma)|(\Omega)\,\d\ppi(\gamma)
&\leq\int\limi_{n\to\infty}
\int_{\gamma^{-1}(\Omega)}g_n(\gamma_t)|\dot\gamma_t|\,\d t
\,\d\ppi(\gamma)\\
&\overset{\star}\leq\limi_{n\to\infty}\int\!\!\!\int_
{\gamma^{-1}(\Omega)}g_n(\gamma_t)|\dot\gamma_t|\,\d t\,\d\ppi(\gamma)\\
&\leq{\rm Lip}(\ppi)\limi_{n\to\infty}\int\!\!\!\int
(\1_\Omega\,g_n)\circ\e_t\,\d t\,\d\ppi\\
&\leq{\rm Comp}(\ppi){\rm Lip}(\ppi)\limi_{n\to\infty}\int_\Omega g_n\,\d\mm\\
&\leq{\rm Comp}(\ppi){\rm Lip}(\ppi)
\big(|{\boldsymbol D}f|_\Pi^{\sf cw}(\Omega)+\varepsilon\big),
\end{split}\]
where the starred inequality is a consequence of Fatou's lemma.
By letting \(\varepsilon\searrow 0\), and exploiting the arbitrariness
of \(\ppi\in\Pi\) and the open set \(\Omega\), we conclude that
\(f\in{\rm BV}_\Pi(\X)\) and that \(|{\boldsymbol D}f|_\Pi(\Omega)\leq
|{\boldsymbol D}f|_\Pi^{\sf cw}(\Omega)\) for every \(\Omega\subseteq\X\)
open, thus proving the claimed property \eqref{eq:equiv_BV_aux3}.\\
{\color{blue}\textsc{Step 3.}} What is left to prove is only
the last part of the statement. If \(\Pi\) is a master family
for \({\rm BV}(\X)\), then \eqref{eq:equiv_BV_cw_claim} forces
the identities \({\rm BV}_\Pi^{\sf cw}(\X)={\rm BV}(\X)\), and
\(|{\boldsymbol D}f|_\Pi^{\sf cw}(\Omega)=|{\boldsymbol D}f|(\Omega)\)
for every \(f\in{\rm BV}(\X)\) and \(\Omega\subseteq\X\) open.
By virtue of the definition \eqref{eq:ext_ptse_tv} and the outer
regularity of the measure \(|{\boldsymbol D}f|\), we deduce that
\(|{\boldsymbol D}f|_\Pi^{\sf cw}(B)=|{\boldsymbol D}f|(B)\) holds
for every Borel set \(B\subseteq\X\), thus in particular
\(|{\boldsymbol D}f|_\Pi^{\sf cw}\) is a finite Borel measure as well.
The statement is achieved.
\end{proof}
For the sake of clarity, we report the following immediate
consequence of Theorem \ref{thm:equiv_BV_cw}.
\begin{corollary}\label{cor:equiv_BV_cw}
Let \((\X,\sfd,\mm)\) be a metric measure space. Then
\({\rm BV}^{\sf cw}(\X)={\rm BV}(\X)\) and
\[
|{\boldsymbol D}f|^{\sf cw}=|{\boldsymbol D}f|,
\quad\text{ for every }f\in{\rm BV}(\X).
\]
\end{corollary}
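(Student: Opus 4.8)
The plan is simply to specialise Theorem \ref{thm:equiv_BV_cw} to the case in which \(\Pi\) is the totality \(\Pi_\infty(\X)\) of all \(\infty\)-test plans on \((\X,\sfd,\mm)\). Recall that, with the shorthand conventions fixed in Definition \ref{def:BV_cw}, the curvewise BV space \({\rm BV}^{\sf cw}(\X)\) is by definition \({\rm BV}^{\sf cw}_{\Pi_\infty(\X)}(\X)\), and likewise \(|{\boldsymbol D}f|^{\sf cw}=|{\boldsymbol D}f|^{\sf cw}_{\Pi_\infty(\X)}\). Hence the corollary is nothing but the instance of Theorem \ref{thm:equiv_BV_cw} corresponding to this particular choice of \(\Pi\).

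Next I would observe that \(\Pi_\infty(\X)\) is, tautologically, a master family for \({\rm BV}(\X)\): indeed \({\rm BV}(\X)\) and \(|{\boldsymbol D}f|\) are by definition \({\rm BV}_{\Pi_\infty(\X)}(\X)\) and \(|{\boldsymbol D}f|_{\Pi_\infty(\X)}\), so the two defining identities of a master family hold trivially. This remark was already recorded right after the definition of master family.

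Finally, the last part of Theorem \ref{thm:equiv_BV_cw} applies and yields \({\rm BV}^{\sf cw}_{\Pi_\infty(\X)}(\X)={\rm BV}(\X)\) together with \(|{\boldsymbol D}f|^{\sf cw}_{\Pi_\infty(\X)}=|{\boldsymbol D}f|\) for every \(f\in{\rm BV}(\X)\) (and, in particular, the fact that \(|{\boldsymbol D}f|^{\sf cw}\) is a finite Borel measure). Translating back through the shorthand notation gives exactly \({\rm BV}^{\sf cw}(\X)={\rm BV}(\X)\) and \(|{\boldsymbol D}f|^{\sf cw}=|{\boldsymbol D}f|\), as claimed. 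There is no genuine obstacle here: the entire mathematical content has been established in Theorem \ref{thm:equiv_BV_cw}, and this corollary is a mere unravelling of the notational conventions together with the observation that the full family of \(\infty\)-test plans is automatically a master family.
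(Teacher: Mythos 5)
Your proposal is correct and matches the paper's own argument exactly: the paper also observes that \(\Pi_\infty(\X)\) is by definition a master family for \({\rm BV}(\X)\) and then invokes the last part of Theorem \ref{thm:equiv_BV_cw}. Nothing further is needed.
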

\begin{proof}
Since the totality \(\Pi_\infty(\X)\) of \(\infty\)-test plans
on \((\X,\sfd,\mm)\) is a master family for \({\rm BV}(\X)\),
the claim directly follows from the last part of the statement
of Theorem \ref{thm:equiv_BV_cw}.
\end{proof}
\subsection{A master curvewise plan for BV}\label{ss:master_cw_tp}
Aim of this section is to build a \(\infty\)-test plan \(\ppi_{\sf m}\)
(concentrated on geodesics when the underlying space is non-branching
\(\sf CD\)) such that
\[
{\rm BV}^{\sf cw}_{\sppi_{\sf m}}(\X)={\rm BV}(\X).
\]
The test plan \(\ppi_{\sf m}\) will be obtained by suitably combining the
elements that constitute the master sequence for \({\rm BV}(\X)\)
provided by Theorem \ref{thm:countable_master_tp}.
Before passing to the construction of the plan \(\ppi_{\sf m}\)
in Theorem \ref{thm:master_cw_plan}, we need to prove the following
technical lemma, concerning the behaviour of
\({\rm BV}_\Pi^{\sf cw}(\X)\) under different manipulations of the
family \(\Pi\) of \(\infty\)-test plans.
\begin{lemma}\label{lem:basic_prop_cw}
Let \((\X,\sfd,\mm)\) be a metric measure space. Then the following
properties hold:
\begin{itemize}
\item[\(\rm i)\)] Let \((\ppi_n)_{n\in\N}\subseteq\Pi_\infty(\X)\)
be given. Let \((\alpha_n)_{n\in\N}\subseteq(0,1)\) be a sequence
with \(\sum_{n=1}^\infty\alpha_n=1\). Suppose \(\sup_n\Lip(\ppi_n)<
+\infty\) and \(\sum_{n=1}^\infty\alpha_n{\rm Comp}(\ppi_n)<+\infty\).
Then \(\ppi\coloneqq\sum_{n=1}^\infty\alpha_n\ppi_n\) is a
\(\infty\)-test plan satisfying
\({\rm BV}_\sppi^{\sf cw}(\X)={\rm BV}_{\{\sppi_n\}_n}^{\sf cw}(\X)\) and
\[
|{\boldsymbol D}f|_\sppi^{\sf cw}=|{\boldsymbol D}f|_{\{\sppi_n\}_n}
^{\sf cw},\quad\text{ for every }f\in{\rm BV}_{\{\sppi_n\}_n}^{\sf cw}(\X).
\]
\item[\(\rm ii)\)] Let \(\Pi=\{\ppi_\lambda\}_{\lambda\in\Lambda}\)
be an arbitrary family of \(\infty\)-test plans on \((\X,\sfd,\mm)\).
For any \(\lambda\in\Lambda\), fix a number \(k(\lambda)\in\N\) and a
subdivision \(0=t_\lambda^0<t_\lambda^1\ldots<t_\lambda^{k(\lambda)}=1\)
of \([0,1]\). Define
\[
\ppi_\lambda^i\coloneqq\big({\rm restr}_{t_\lambda^{i-1}}^{t_\lambda^i
}\big)_\#\ppi_\lambda\in\Pi_\infty(\X),\quad\text{ for every }
\lambda\in\Lambda\text{ and }i=1,\ldots,k(\lambda).
\]
Then the family \(\hat\Pi\coloneqq\big\{\ppi_\lambda^i\,:\,
\lambda\in\Lambda,\,1\leq i\leq k(\lambda)\big\}\) satisfies
\({\rm BV}_{\hat\Pi}^{\sf cw}(\X)={\rm BV}_\Pi^{\sf cw}(\X)\) and
\[
|{\boldsymbol D}f|_{\hat\Pi}^{\sf cw}=|{\boldsymbol D}f|_\Pi^{\sf cw},
\quad\text{ for every }f\in{\rm BV}_\Pi^{\sf cw}(\X).
\]
\end{itemize}
\end{lemma}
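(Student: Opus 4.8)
The plan is to reduce both statements to one fact: passing from $\Pi$ to $\hat\Pi$ (resp.\ from $\{\ppi_n\}_n$ to $\{\ppi\}$) leaves unchanged the class of curvewise bounds, i.e.\ for every $f$ and every open $\Omega\subseteq\X$ a sequence $(g_n)$ of non-negative functions is a curvewise $\Pi$-bound for $f$ on $\Omega$ if and only if it is a curvewise bound for the manipulated family. Since the functional $(g_n)\mapsto\limi_n\int_\Omega g_n\,\d\mm$ is the same in both cases and $|{\boldsymbol D}f|^{\sf cw}_\Pi(\Omega)$ is by \eqref{eq:def_curvewise_BV} its infimum over curvewise $\Pi$-bounds, this at once gives equality of the spaces ${\rm BV}^{\sf cw}$ (take $\Omega=\X$ and use finiteness), of the set-functions on open sets, and---through the Carath\'eodory formula \eqref{eq:ext_ptse_tv}---on all Borel sets. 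So the whole content is the stability of the class of curvewise bounds.

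For i): first, $\ppi\coloneqq\sum_n\alpha_n\ppi_n$ is a $\infty$-test plan, since $(\e_t)_\#\ppi\le\big(\sum_n\alpha_n{\rm Comp}(\ppi_n)\big)\mm$ for every $t$ and each $\ppi_n$, hence $\ppi$, is concentrated on the closed set of $\big(\sup_k{\rm Lip}(\ppi_k)\big)$-Lipschitz curves. The crux is that, the $\alpha_n$ being strictly positive, a Borel set $B$ of curves satisfies $\ppi(B)=0$ if and only if $\ppi_n(B)=0$ for every $n$. Consequently the (possibly non-Borel) set of curves along which \eqref{eq:def_BV_bound} fails is contained in a $\ppi$-null Borel set exactly when it is contained in a $\ppi_n$-null Borel set for each $n$; that is, $(g_n)$ is a curvewise $\{\ppi_n\}_n$-bound for $f$ on $\Omega$ if and only if it is a curvewise $\{\ppi\}$-bound, and the reduction above applies.

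For ii), the workhorse is the behaviour of \eqref{eq:def_BV_bound} under the affine reparametrisations $\phi_\lambda^i\colon[0,1]\to[t_\lambda^{i-1},t_\lambda^i]$, $\phi_\lambda^i(r)\coloneqq(1-r)t_\lambda^{i-1}+rt_\lambda^i$. For $\sigma\coloneqq{\rm restr}_{t_\lambda^{i-1}}^{t_\lambda^i}(\gamma)=\gamma\circ\phi_\lambda^i$, since $\phi_\lambda^i$ is affine and increasing one has $|D(f\circ\sigma)|((a,b))=|D(f\circ\gamma)|\big((\phi_\lambda^i(a),\phi_\lambda^i(b))\big)$, $|\dot\sigma_r|=(t_\lambda^i-t_\lambda^{i-1})\,|\dot\gamma_{\phi_\lambda^i(r)}|$ for a.e.\ $r$, $\int_a^b g_n(\sigma_r)|\dot\sigma_r|\,\d r=\int_{\phi_\lambda^i(a)}^{\phi_\lambda^i(b)}g_n(\gamma_u)|\dot\gamma_u|\,\d u$, and $\sigma((a,b))=\gamma\big((\phi_\lambda^i(a),\phi_\lambda^i(b))\big)$. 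Hence \eqref{eq:def_BV_bound} along $\ppi_\lambda^i$-a.e.\ $\sigma$ is equivalent to: for $\ppi_\lambda$-a.e.\ $\gamma$ and every $t_\lambda^{i-1}\le c<d\le t_\lambda^i$ with $\gamma((c,d))\subseteq\Omega$, $|D(f\circ\gamma)|((c,d))\le\limi_n\int_c^d g_n(\gamma_u)|\dot\gamma_u|\,\d u$ (the endpoint cases $c=t_\lambda^{i-1}$, $d=t_\lambda^i$ follow by letting $a\searrow 0$, $b\nearrow 1$). In particular every curvewise $\Pi$-bound is automatically a curvewise $\hat\Pi$-bound---the latter only probes subintervals of the blocks $(t_\lambda^{i-1},t_\lambda^i)$---so ${\rm BV}_\Pi^{\sf cw}(\X)\subseteq{\rm BV}_{\hat\Pi}^{\sf cw}(\X)$ and $|{\boldsymbol D}f|_{\hat\Pi}^{\sf cw}(\Omega)\le|{\boldsymbol D}f|_\Pi^{\sf cw}(\Omega)$.

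The reverse inclusion is the main obstacle, and is handled by concatenating the per-block estimates. Fix $f\in{\rm BV}_{\hat\Pi}^{\sf cw}(\X)$, a curvewise $\hat\Pi$-bound $(g_n)$ for $f$ on $\X$ with $\sup_n\|g_n\|_{L^1(\mm)}<\infty$, and $\ppi_\lambda\in\Pi$. Testing the equivalent statement above with $(a,b)\nearrow(0,1)$ and integrating against $\ppi_\lambda^i$ shows $|D(f\circ\gamma)|\big((t_\lambda^{i-1},t_\lambda^i)\big)<\infty$ for $\ppi_\lambda$-a.e.\ $\gamma$ and all $i$, hence, since $f\circ\gamma\in L^1(0,1)$ for $\ppi_\lambda$-a.e.\ $\gamma$ (from $f\in L^1(\mm)$ and bounded compression), $f\circ\gamma\in{\rm BV}(0,1)$ for $\ppi_\lambda$-a.e.\ $\gamma$; Lemma \ref{lem:bdry_cond_BV}, applied at the finitely many points $t_\lambda^1,\dots,t_\lambda^{k(\lambda)-1}$, then yields $|D(f\circ\gamma)|(\{t_\lambda^i\})=0$ for $\ppi_\lambda$-a.e.\ $\gamma$. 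Now fix such a $\gamma$ and $0<a<b<1$ with $\gamma((a,b))\subseteq\Omega$, and let $a=v_0<v_1<\dots<v_p<v_{p+1}=b$ consist of $a$, $b$ and the subdivision points lying in $(a,b)$; each $[v_l,v_{l+1}]$ lies in some $[t_\lambda^{i-1},t_\lambda^i]$ and $\gamma((v_l,v_{l+1}))\subseteq\gamma((a,b))\subseteq\Omega$, while $v_1,\dots,v_p$ are subdivision points carrying no $|D(f\circ\gamma)|$-mass, so additivity of $|D(f\circ\gamma)|$ and superadditivity of $\limi$ give
\[
|D(f\circ\gamma)|((a,b))=\sum_{l=0}^{p}|D(f\circ\gamma)|\big((v_l,v_{l+1})\big)\le\limi_{n\to\infty}\int_a^b g_n(\gamma_u)|\dot\gamma_u|\,\d u.
\]
Thus $(g_n)$ is a curvewise $\{\ppi_\lambda\}$-bound and, $\lambda$ being arbitrary, a curvewise $\Pi$-bound, whence ${\rm BV}_{\hat\Pi}^{\sf cw}(\X)\subseteq{\rm BV}_\Pi^{\sf cw}(\X)$ and $|{\boldsymbol D}f|_\Pi^{\sf cw}(\Omega)\le|{\boldsymbol D}f|_{\hat\Pi}^{\sf cw}(\Omega)$; combined with the previous paragraph this closes the proof. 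The genuinely delicate point is the passage from per-block control to control on an arbitrary subinterval straddling several subdivision points: it hinges on the a.e.\ vanishing of $|D(f\circ\gamma)|$ at a fixed point, which is available only after one has first established $f\circ\gamma\in{\rm BV}(0,1)$ a.e.\ via the $L^1$-bound on $(g_n)$ and Lemma \ref{lem:bdry_cond_BV}.
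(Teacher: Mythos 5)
Your proof is correct and follows essentially the same route as the paper's: part i) via the observation that a Borel set is \(\ppi\)-null if and only if it is \(\ppi_n\)-null for every \(n\) (so the curvewise bounds coincide), and part ii) via affine reparametrisation for one inclusion and, for the converse, Lemma \ref{lem:bdry_cond_BV} at the subdivision points followed by block-wise concatenation using superadditivity of \(\limi\). If anything, you are more careful than the paper in first justifying that \(f\circ\gamma\in{\rm BV}(0,1)\) for a.e.\ \(\gamma\) (from the globally \(L^1\)-bounded curvewise \(\hat\Pi\)-bound) before invoking Lemma \ref{lem:bdry_cond_BV}.
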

\begin{proof}
\ \\
{\color{blue}i)} Observe that a Borel set \(N\subseteq C([0,1],\X)\)
is \(\ppi\)-null if and only if it is \(\ppi_n\)-null for all \(n\in\N\).
This implies that curvewise \(\ppi\)-bounds and curvewise
\(\{\ppi_n\}_n\)-bounds coincide, yielding i).\\
{\color{blue}ii)} Fix any \(f\in L^1(\mm)\) and an open set
\(\Omega\subseteq\X\). It is then sufficient to show that a given
sequence \((g_n)_{n\in\N}\subseteq L^1(\mm|_\Omega)\) is a curvewise
\(\Pi\)-bound for \(f\) on \(\Omega\) if and only if it is a curvewise
\(\hat\Pi\)-bound for \(f\) on \(\Omega\). Suppose the former holds.
Fix any \(\lambda\in\Lambda\) and \(i=1,\ldots,k(\lambda)\). Let \(N\)
be a \(\ppi_\lambda\)-null Borel set of curves such that the property
in \eqref{eq:def_BV_bound} holds for every fixed \(\gamma\notin N\);
here and in the rest of the proof, we are assuming to have
fixed a Borel representative of \(f\).
In particular, \(\sigma\coloneqq{\rm restr}_{t_\lambda^{i-1}}
^{t_\lambda^i}(\gamma)\) has this property: if
\(0<a<b<1\) and \(\sigma((a,b))\subseteq\Omega\), then
\[
|D(f\circ\sigma)|((a,b))=|D(f\circ\gamma)|((a',b'))\leq
\limi_{n\to\infty}\int_{a'}^{b'}g_n(\gamma_t)|\dot\gamma_t|\,\d t
=\limi_{n\to\infty}\int_a^b g_n(\sigma_t)|\dot\sigma_t|\,\d t,
\]
where we set \(a'\coloneqq(1-a)t_\lambda^{i-1}+a\,t_\lambda^i\)
and \(b'\coloneqq(1-b)t_\lambda^{i-1}+b\,t_\lambda^i\). Hence, given
that the set of such curves \(\sigma\)'s is \(\ppi_\lambda^i\)-null,
we have proved that \((g_n)_n\) is a curvewise \(\hat\Pi\)-bound for
\(f\) on \(\Omega\).

Conversely, suppose \((g_n)_n\) is a curvewise \(\hat\Pi\)-bound for
\(f\) on \(\Omega\). Fix any \(\lambda\in\Lambda\). Given any index
\(i=1,\ldots,k(\lambda)\), we can find a \(\ppi_\lambda^i\)-null Borel
set \(N_i\) of curves such that \eqref{eq:def_BV_bound} holds for
all \(\sigma\notin N_i\). Thanks to Lemma \ref{lem:bdry_cond_BV},
we can find a \(\ppi_\lambda\)-null Borel set \(\tilde N\) of curves
such that
\begin{equation}\label{eq:techn_curvewise_aux}
|D(f\circ\gamma)|\big(\{t_\lambda^1,\ldots,t_\lambda^{k(\lambda)-1}\}\big)
=0,
\quad\text{ for every }\gamma\notin\tilde N.
\end{equation}
Now let us consider the \(\ppi_\lambda\)-null set \(N\) of curves,
which is defined as
\[
N\coloneqq\tilde N\cup\bigcup_{i=1}^{k(\lambda)}
\big({\rm restr}_{t_\lambda^{i-1}}^{t_\lambda^i}\big)^{-1}(N_i).
\]
Fix \(\gamma\notin N\) and \(0<a<b<1\) with \(\gamma((a,b))\subseteq\Omega\).
For any \(i=1,\ldots,k(\lambda)\), we denote by \(I_i\) the open
interval \((a,b)\cap(t_\lambda^{i-1},t_\lambda^i)\). Given that
\(\sigma^i\coloneqq{\rm restr}_{t_\lambda^{i-1}}^{t_\lambda^i}(\gamma)
\notin N_i\), we may estimate
\[\begin{split}
|D(f\circ\gamma)|((a,b))&\overset{\eqref{eq:techn_curvewise_aux}}=
\sum_{i=1}^{k(\lambda)}|D(f\circ\gamma)|(I_i)=
\sum_{i=1}^{k(\lambda)}|D(f\circ\sigma^i)|(I'_i)
\leq\sum_{i=1}^{k(\lambda)}\limi_{n\to\infty}
\int_{I'_i}g_n(\sigma^i_t)|\dot\sigma^i_t|\,\d t\\
&\overset{\phantom{\eqref{eq:techn_curvewise_aux}}}\leq
\limi_{n\to\infty}\sum_{i=1}^{k(\lambda)}
\int_{I'_i}g_n(\sigma^i_t)|\dot\sigma^i_t|\,\d t
=\limi_{n\to\infty}\int_a^b g_n(\gamma_t)|\dot\gamma_t|\,\d t,
\end{split}\]
where we set
\(I'_i\coloneqq{\rm restr}_{t_\lambda^{i-1}}^{t_\lambda^i}(I_i)\).
This shows that \((g_n)_n\) is a curvewise \(\Pi\)-bound for \(f\)
on \(\Omega\).
\end{proof}
We are now in a position to build the master curvewise plan
\(\ppi_{\sf m}\) for BV.
\begin{theorem}[Master curvewise plan for BV]\label{thm:master_cw_plan}
Let \((\X,\sfd,\mm)\) be a metric measure space. Then there exists
a \(\infty\)-test plan \(\ppi_{\sf m}\) on \((\X,\sfd,\mm)\) such that
\({\rm BV}_{\sppi_{\sf m}}^{\sf cw}(\X)={\rm BV}(\X)\) and
\[
|{\boldsymbol D}f|_{\sppi_{\sf m}}^{\sf cw}=|{\boldsymbol D}f|,
\quad\text{ for every }f\in{\rm BV}(\X).
\]
Moreover, if \((\X,\sfd,\mm)\) is a non-branching \({\sf CD}(K,N)\)
space for some \(K\in\R\), \(N\in(1,\infty)\), and the measure \(\mm\)
is finite, then we can additionally require that \(\ppi_{\sf m}\) is
concentrated on geodesics.
\end{theorem}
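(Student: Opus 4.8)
The plan is to apply Theorem \ref{thm:countable_master_tp} to obtain a countable master family for \({\rm BV}(\X)\), and then to collapse it to a single plan by combining parts i) and ii) of Lemma \ref{lem:basic_prop_cw}: the point is that neither chopping a test plan into subintervals nor forming a suitably weighted countable convex combination affects the curvewise BV space. Concretely, I would first fix, by Theorem \ref{thm:countable_master_tp}, an at most countable master family \(\Pi^{\sf m}=\{\ppi_n\}_{n\in\N}\subseteq\Pi_\infty(\X)\) for \({\rm BV}(\X)\); in the non-branching \({\sf CD}(K,N)\) case with \(\mm(\X)<+\infty\) this family may be taken inside \(\Pi_{\rm G}(\X)\), so that every \(\ppi_n\) is concentrated on \({\rm Geo}(\X)\). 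Since a master family is in particular a family of \(\infty\)-test plans, Theorem \ref{thm:equiv_BV_cw} already gives \({\rm BV}_{\Pi^{\sf m}}^{\sf cw}(\X)={\rm BV}(\X)\) together with \(|{\boldsymbol D}f|_{\Pi^{\sf m}}^{\sf cw}=|{\boldsymbol D}f|\) for every \(f\in{\rm BV}(\X)\).

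Next I would cut down the Lipschitz constants so that part i) of Lemma \ref{lem:basic_prop_cw} becomes applicable. For each \(n\) choose an integer \(k(n)\geq\Lip(\ppi_n)\), partition \([0,1]\) into the \(k(n)\) subintervals of length \(1/k(n)\), and set \(\ppi_n^i\coloneqq({\rm restr}_{(i-1)/k(n)}^{i/k(n)})_\#\ppi_n\) for \(i=1,\ldots,k(n)\). Reading off the metric speed of a linearly reparametrised curve one gets \(\Lip(\ppi_n^i)\leq\Lip(\ppi_n)/k(n)\leq 1\), while \((\e_r)_\#\ppi_n^i=(\e_{(i-1+r)/k(n)})_\#\ppi_n\leq{\rm Comp}(\ppi_n)\mm\) shows \({\rm Comp}(\ppi_n^i)\leq{\rm Comp}(\ppi_n)<+\infty\); moreover, in the \({\sf CD}\) case \(\ppi_n^i\) is still concentrated on \({\rm Geo}(\X)\), since \({\rm restr}_s^t\) maps geodesics to reparametrised geodesics. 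By part ii) of Lemma \ref{lem:basic_prop_cw}, the countable family \(\hat\Pi\coloneqq\{\ppi_n^i:n\in\N,\ 1\leq i\leq k(n)\}\) satisfies \({\rm BV}_{\hat\Pi}^{\sf cw}(\X)={\rm BV}_{\Pi^{\sf m}}^{\sf cw}(\X)={\rm BV}(\X)\) and \(|{\boldsymbol D}f|_{\hat\Pi}^{\sf cw}=|{\boldsymbol D}f|\) for every \(f\in{\rm BV}(\X)\).

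Finally I would glue. Enumerating \(\hat\Pi=\{\sigma_m\}_{m\in\N}\) we have \(\sup_m\Lip(\sigma_m)\leq 1\) and \({\rm Comp}(\sigma_m)<+\infty\) for each \(m\); putting \(\alpha_m\coloneqq c\,2^{-m}(1+{\rm Comp}(\sigma_m))^{-1}\) with \(c>0\) chosen so that \(\sum_m\alpha_m=1\), we get \(\alpha_m\in(0,1)\) and \(\sum_m\alpha_m{\rm Comp}(\sigma_m)\leq c\sum_m 2^{-m}<+\infty\). Part i) of Lemma \ref{lem:basic_prop_cw} then guarantees that \(\ppi_{\sf m}\coloneqq\sum_{m\in\N}\alpha_m\sigma_m\) is a \(\infty\)-test plan with \({\rm BV}_{\sppi_{\sf m}}^{\sf cw}(\X)={\rm BV}_{\hat\Pi}^{\sf cw}(\X)={\rm BV}(\X)\) and \(|{\boldsymbol D}f|_{\sppi_{\sf m}}^{\sf cw}=|{\boldsymbol D}f|_{\hat\Pi}^{\sf cw}=|{\boldsymbol D}f|\) for every \(f\in{\rm BV}(\X)\); and in the non-branching \({\sf CD}(K,N)\) case \(\ppi_{\sf m}\), being a countable convex combination of plans concentrated on \({\rm Geo}(\X)\), is itself concentrated on \({\rm Geo}(\X)\).

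I do not expect a genuinely hard step here: the substantive content — insensitivity of the curvewise BV space to restriction and to countable convex combinations, and the equality \({\rm BV}_\Pi^{\sf cw}(\X)={\rm BV}(\X)\) for a master family \(\Pi\) — is absorbed into Lemma \ref{lem:basic_prop_cw} and Theorem \ref{thm:equiv_BV_cw}. The only point deserving some care is the bookkeeping in the chopping step, namely checking that \(\Lip\) contracts by the factor \(1/k(n)\) while \({\rm Comp}\) does not increase and concentration on geodesics is preserved, together with the (routine) choice of summable weights \(\alpha_m\).
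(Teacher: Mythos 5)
Your proposal is correct and follows essentially the same route as the paper's proof: a countable master family from Theorem \ref{thm:countable_master_tp}, chopping each plan into subintervals via \({\rm restr}\) so that all Lipschitz constants drop below \(1\) (invoking Lemma \ref{lem:basic_prop_cw}~ii) together with Theorem \ref{thm:equiv_BV_cw}), and then a countable convex combination with weights chosen summable against the compression constants (Lemma \ref{lem:basic_prop_cw}~i)). The only cosmetic difference is the exact formula for the weights \(\alpha_m\), which plays the same role as the paper's choice.
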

\begin{proof}
First, let \(\Pi=\{\ppi_i\}_{i\in\N}\) be a master family
for \({\rm BV}(\X)\), whose existence is granted by Theorem
\ref{thm:countable_master_tp}; the same result ensures
that, in the non-branching \({\sf CD}(K,N)\) case, each
\(\ppi_i\) can be additionally chosen to be concentrated on geodesics.
Given any \(i\in\N\), pick \(n_i\in\N\) such that
\({\rm Lip}(\ppi_i)\leq n_i\) and define the \(\infty\)-test
plans \((\ppi_i^j)_{j=1}^{n_i}\) as
\[
\ppi_i^j\coloneqq\big({\rm restr}_{(j-1)/n_i}^{j/n_i}\big)_\#\ppi_i,
\quad\text{ for every }j=1,\ldots,n_i.
\]
Notice that \({\rm Lip}(\ppi_i^j)\leq 1\). Now define
\(\hat\Pi\coloneqq\big\{\ppi_i^j\,:\,i\in\N,\,j=1,\ldots,n_i\big\}\).
Theorem \ref{thm:equiv_BV_cw} and item ii) of Lemma
\ref{lem:basic_prop_cw} ensure that
\({\rm BV}(\X)={\rm BV}_{\hat\Pi}^{\sf cw}(\X)\) and
\(|{\boldsymbol D}f|=|{\boldsymbol D}f|_{\hat\Pi}^{\sf cw}\)
for all \(f\in{\rm BV}(\X)\). Moreover, let us relabel \(\hat\Pi\) as
\(\{\ppi^n\}_{n\in\N}\). For any \(n\in\N\), we define
\(\alpha_n\in(0,1)\) as
\[
\alpha_n\coloneqq\frac{1}{2^n\alpha\max\{{\rm Comp}(\ppi^n),1\}},
\quad\text{ where }\alpha\coloneqq\sum_{m\in\N}
\frac{1}{2^m\max\{{\rm Comp}(\ppi^m),1\}}.
\]
Finally, we define the \(\infty\)-test plan
\(\ppi_{\sf m}\in\Pi_\infty(\X)\) as
\(\ppi_{\sf m}\coloneqq\sum_{n\in\N}\alpha_n\ppi^n\).
Observe that in the non-branching \({\sf CD}(K,N)\) case
we have that \(\ppi_{\sf m}\) is concentrated on \({\rm Geo}(\X)\).
What previously proved and item i) of Lemma \ref{lem:basic_prop_cw}
imply that \({\rm BV}(\X)={\rm BV}_{\sppi_{\sf m}}^{\sf cw}(\X)\) and
that \(|{\boldsymbol D}f|=|{\boldsymbol D}f|_{\sppi_{\sf m}}^{\sf cw}\)
for every \(f\in{\rm BV}(\X)\), thus yielding the statement.
\end{proof}
\appendix
\section{Comparison with the AM-BV space}\label{app:AM-BV}
Let \((\X,\sfd,\mm)\) be a metric measure space and
\(\Omega\subseteq\X\) an open set. We denote by
\(\mathcal C(\Omega)\) the family of all non-constant,
rectifiable curves \(\gamma\colon I\to\Omega\), where
\(I\subseteq\R\) is a compact interval. Given any
\(\gamma\in\mathcal C(\Omega)\), we will denote by \(I_\gamma\)
its domain of definition. For any non-negative Borel function
\(G\colon\Omega\to[0,+\infty]\), we denote the line integral
of \(G\) along \(\gamma\) by
\[
\int_\gamma G\coloneqq\int_{\min I_\gamma}^{\max I_\gamma}
G(\gamma_t)|\dot\gamma_t|\,\d t.
\]
\begin{definition}[Approximation modulus \cite{Martio16}]
Let \((\X,\sfd,\mm)\) be a metric measure space, \(\Omega\subseteq\X\)
an open set. Let \(\Gamma\subseteq\mathcal C(\Omega)\) be a given
family of curves. Then a sequence \((\rho_i)_{i\in\N}\) of non-negative
Borel functions \(\rho_i\colon\Omega\to\R\) is said to be
\(AM_\Omega\)-admissible for \(\Gamma\) provided it holds
\[
\limi_{i\to\infty}\int_\gamma\rho_i\geq 1,
\quad\text{ for every }\gamma\in\Gamma.
\]
The approximation modulus of \(\Gamma\) in \(\Omega\) is defined as
\[
AM_\Omega(\Gamma)\coloneqq\inf_{(\rho_i)_i}\limi_{i\to\infty}
\int_\Omega\rho_i\,\d\mm,
\]
where the infimum is taken among all \(AM_\Omega\)-admissible
sequences \((\rho_i)_i\) for \(\Gamma\). Moreover, a property
\(\mathcal P=\mathcal P(\gamma)\) is said to hold for
\(AM_\Omega\)-a.e.\ curve \(\gamma\) provided there exists a
family of curves \(\Gamma_0\subseteq\mathcal C(\Omega)\) with
\(AM_\Omega(\Gamma_0)=0\) such that \(\mathcal P(\gamma)\) holds
for every \(\gamma\in\mathcal C(\Omega)\setminus\Gamma_0\).
\end{definition}

It holds that \(AM_\Omega\) is an outer measure on
\(\mathcal C(\Omega)\). When \(\Omega=\X\), we just write \(AM\)
in place of \(AM_\X\). Observe that \(\mathcal C(\Omega)\subseteq
\mathcal C(\X)\) and that \(AM(\Gamma)\leq AM_\Omega(\Gamma)\)
for every \(\Gamma\subseteq\mathcal C(\Omega)\).
\begin{lemma}\label{lem:null_AM_vs_tp}
Let \((\X,\sfd,\mm)\) be a metric measure space. Let
\(\Gamma\subseteq C([0,1],\X)\) be a family of curves
such that \(AM(\Gamma)=0\). Fix a \(\infty\)-test plan
\(\ppi\) on \((\X,\sfd,\mm)\). Then there exists a Borel set
\(\Gamma_0\subseteq C([0,1],\X)\) such that \(\Gamma\subseteq\Gamma_0\)
and \(\ppi(\Gamma_0)=0\).
\end{lemma}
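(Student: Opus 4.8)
The plan is to adapt the classical dictionary between ``modulus-null'' families of curves and ``test-plan-null'' families, with the twist that the \(AM\)-modulus is defined through \emph{sequences} of admissible functions rather than single ones, so one has to package countably many such sequences into a single object. First I would use the hypothesis \(AM(\Gamma)=0\): for every \(k\in\N\) there is a sequence \((\rho^k_i)_{i\in\N}\) of non-negative Borel functions on \(\X\) that is \(AM\)-admissible for \(\Gamma\) (that is, \(\liminf_i\int_\gamma\rho^k_i\ge 1\) for all \(\gamma\in\Gamma\)) and satisfies \(\liminf_i\int_\X\rho^k_i\,\d\mm<2^{-k}\). Since any subsequence of an \(AM\)-admissible sequence is again \(AM\)-admissible (the \(\liminf\) only grows along subsequences), after passing to a subsequence and relabelling I may assume \(\int_\X\rho^k_i\,\d\mm<2^{-k}\) for every \(i,k\in\N\); crucially, all these sequences can now be taken to be indexed by the same \(\N\).

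Next I would collapse them into a single sequence by setting \(g_i\coloneqq\sum_{k=1}^\infty\rho^k_i\colon\X\to[0,+\infty]\), a Borel function with \(\|g_i\|_{L^1(\mm)}=\sum_k\int_\X\rho^k_i\,\d\mm\le 1\), and consider the Borel functional \(\Phi_i\colon C([0,1],\X)\to[0,+\infty]\) given by \(\Phi_i(\gamma)\coloneqq\int_0^1 g_i(\gamma_t)\,{\sf ms}(\gamma,t)\,\d t\) (Borel because \((\gamma,t)\mapsto g_i(\gamma_t){\sf ms}(\gamma,t)\) is Borel on \(C([0,1],\X)\times[0,1]\) and one integrates out \(t\) by Tonelli). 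The candidate exceptional set is
\[
\Gamma_0\coloneqq\Big\{\gamma\in C([0,1],\X)\;:\;\liminf_{i\to\infty}\Phi_i(\gamma)=+\infty\Big\},
\]
which is Borel. To check \(\Gamma\subseteq\Gamma_0\), I would fix \(\gamma\in\Gamma\) and note that, since these curves are parametrised on \([0,1]\), the \(AM\)-line integral is \(\int_\gamma\rho^k_i=\int_0^1\rho^k_i(\gamma_t){\sf ms}(\gamma,t)\,\d t\); then the pointwise bound \(g_i\ge\sum_{k\le K}\rho^k_i\), the superadditivity of \(\liminf\) over finite sums, and the admissibility of each \((\rho^k_i)_i\) give \(\liminf_i\Phi_i(\gamma)\ge\sum_{k\le K}\liminf_i\int_\gamma\rho^k_i\ge K\) for every \(K\in\N\), whence \(\gamma\in\Gamma_0\).

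For \(\ppi(\Gamma_0)=0\) I would invoke the defining bounds of a \(\infty\)-test plan: using \({\sf ms}(\gamma,t)\le{\rm Lip}(\ppi)\) for \((\ppi\otimes\mathcal L_1)\)-a.e.\ \((\gamma,t)\), the bounded-compression inequality \((\e_t)_\#\ppi\le{\rm Comp}(\ppi)\mm\), and Tonelli's theorem, one obtains \(\int\Phi_i\,\d\ppi\le{\rm Lip}(\ppi)\int_0^1\!\int g_i\,\d(\e_t)_\#\ppi\,\d t\le{\rm Comp}(\ppi){\rm Lip}(\ppi)\|g_i\|_{L^1(\mm)}\le{\rm Comp}(\ppi){\rm Lip}(\ppi)\) for every \(i\). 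Fatou's lemma then yields \(\int\liminf_i\Phi_i\,\d\ppi\le\liminf_i\int\Phi_i\,\d\ppi<+\infty\), so \(\liminf_i\Phi_i<+\infty\) holds \(\ppi\)-a.e., i.e.\ \(\ppi(\Gamma_0)=0\). Together with \(\Gamma\subseteq\Gamma_0\), this is the assertion.

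The step I expect to be the main obstacle is precisely the first maneuver: for the \(p\)-modulus one can simply sum admissible functions and run the argument with a single \(L^p\) upper gradient, but the \(AM\)-modulus puts the admissibility condition ``inside the \(\liminf\)'', so no single function is available. The proof genuinely hinges on two facts that make the diagonalisation go through — subsequences preserve \(AM\)-admissibility (so all countably many sequences can share the index set \(\N\)) and \(\liminf\) is superadditive over finite sums (so the finite partial sums of line integrals blow up along \(\Gamma\)) — together with the control \(\|g_i\|_{L^1(\mm)}\le 1\) coming from the \(2^{-k}\)-bookkeeping. Once this is in place, the test-plan estimate plus Fatou, and the measurability checks for \(\Phi_i\) and \(\Gamma_0\), are entirely routine.
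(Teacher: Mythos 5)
Your proof is correct: the measurability checks, the superadditivity of \(\liminf\), the preservation of \(AM\)-admissibility under subsequences, and the Fatou/Tonelli estimate against the test plan all go through. The core analytic step — bounding \(\int\big(\liminf_i\int_\gamma\rho_i\big)\,\d\ppi\) by \({\rm Comp}(\ppi){\rm Lip}(\ppi)\liminf_i\int\rho_i\,\d\mm\) — is exactly the one the paper uses. Where you diverge is in the bookkeeping: the step you identify as "the main obstacle" (collapsing countably many admissible sequences into a single sequence \(g_i=\sum_k\rho^k_i\) after a subsequence extraction) is not needed. The paper keeps the sequences separate: for each \(n\) it takes an admissible sequence with \(\liminf_i\int\rho^n_i\,\d\mm\leq 1/n\), sets \(\Gamma_n\coloneqq\{\gamma:\liminf_i\int_\gamma\rho^n_i\geq 1\}\supseteq\Gamma\), proves \(\ppi(\Gamma_n)\leq{\rm Comp}(\ppi){\rm Lip}(\ppi)/n\) by the same Chebyshev/Fatou estimate, and takes \(\Gamma_0\coloneqq\bigcap_n\Gamma_n\). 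This intersection-of-sublevel-sets packaging avoids both the passage to subsequences and the summation, and gives a quantitative decay of \(\ppi(\Gamma_n)\) rather than an \(L^1(\ppi)\)-finiteness argument; your summation variant buys nothing extra here, though it is the natural move if one wants a single Fuglede-type exceptional functional. Both yield the same Borel set up to inessential differences.
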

\begin{proof}
Given \(n\in\N\), pick an AM-admissible
sequence \((\rho_i^n)_i\) for \(\Gamma\) such that
\(\limi_i\int\rho_i^n\,\d\mm\leq 1/n\). Let us consider the Borel sets
\begin{equation}\label{eq:def_Gamma_n}
\Gamma_n\coloneqq\bigg\{\gamma\in C([0,1],\X)\;\bigg|\;
\limi_{i\to\infty}\int_\gamma\rho_i^n\geq 1\bigg\},
\quad\text{ for every }n\in\N.
\end{equation}
Then the set \(\Gamma_0\coloneqq\bigcap_n\Gamma_n\) is Borel and
contains \(\Gamma\). We aim to show that \(\ppi(\Gamma_0)=0\). Since
\[\begin{split}
\ppi(\Gamma_0)&\leq\ppi(\Gamma_n)=\int\1_{\Gamma_n}\,\d\ppi
\overset{\eqref{eq:def_Gamma_n}}\leq
\int\bigg(\limi_{i\to\infty}\int_\gamma\rho_i^n\bigg)\d\ppi(\gamma)
\overset{\star}\leq\limi_{i\to\infty}\int\!\!\!
\int_0^1\rho_i^n(\gamma_t)|\dot\gamma_t|\,\d t\,\d\ppi(\gamma)\\
&\leq{\rm Lip}(\ppi)\limi_{i\to\infty}\int_0^1\!\!\!\int\rho_i^n\circ
\e_t\,\d\ppi\,\d t\leq{\rm Comp}(\ppi){\rm Lip}(\ppi)
\limi_{i\to\infty}\int\rho_i^n\,\d\mm\leq
\frac{{\rm Comp}(\ppi){\rm Lip}(\ppi)}{n}
\end{split}\]
holds for every \(n\in\N\) (where the starred inequality can be
justified by using Fatou's lemma), by letting \(n\to\infty\) we
conclude that \(\ppi(\Gamma_0)=0\), as desired.
\end{proof}

The following remark is an easy consequence of the
definition of approximation modulus:
\begin{remark}\label{rmk:behaviour_AM_restr}{\rm
Let \(\Gamma,\Gamma'\subseteq\mathcal C(\X)\) be two families of
curves having the following property: given any \(\gamma\in\Gamma\),
some subcurve \(\sigma\) of \(\gamma\) belongs to \(\Gamma'\),
meaning that there exists a compact subinterval \(I\) of \(I_\gamma\)
such that \(\sigma\coloneqq\gamma|_I\in\Gamma'\). Then it holds
that \(AM(\Gamma)\leq AM(\Gamma')\).
\fr}\end{remark}

We denote by \(\mathcal L^1(\mm)\) the family of all Borel functions
\(f\colon\X\to\R\) such that \(\int|f|\,\d\mm<+\infty\).
In particular, the Lebesgue space \(L^1(\mm)\) is the quotient of
\(\mathcal L^1(\mm)\) obtained by identifying those Borel functions
which agree up to \(\mm\)-negligible sets.
\begin{definition}[\(BV_{AM}\) upper bound \cite{Martio16-2}]
Let \((\X,\sfd,\mm)\) be a metric measure space, \(\Omega\subseteq\X\)
an open set. Let \(f\in\mathcal L^1(\mm|_\Omega)\) be given.
Then we say that a sequence \((g_i)_{i\in\N}\) of non-negative Borel
functions \(g_i\colon\Omega\to\R\) is a \(BV_{AM}\) upper bound for
\(f\) on \(\Omega\) provided it holds that
\begin{equation}\label{eq:def_AM_upper_bound}
|D(f\circ\gamma)|(I_\gamma)\leq\limi_{i\to\infty}\int_\gamma g_i,
\quad\text{ for }AM_\Omega\text{-a.e.\ }\gamma.
\end{equation}
\end{definition}

Being both sides of \eqref{eq:def_AM_upper_bound} invariant
under reparametrisations of \(\gamma\), we have that \((g_i)_i\)
is a \(BV_{AM}\) upper bound for \(f\) on \(\Omega\) if and only if
\eqref{eq:def_AM_upper_bound} holds for \(AM_\Omega\)-a.e.\ \(\gamma\)
having constant speed. Moreover, as proven in
\cite[Lemma 2.2]{Martio16-2}, we have that \((g_i)_i\)
is a \(BV_{AM}\) upper bound for \(f\) on \(\Omega\)
if and only if for \(AM_\Omega\)-a.e.\ \(\gamma\) it holds
\begin{equation}\label{eq:def_AM_upper_bound_alt}
|D(f\circ\gamma)|([a,b])\leq\limi_{i\to\infty}\int_{\gamma|_{[a,b]}}g_i,
\quad\text{ for every }a,b\in I_\gamma\text{ with }a<b.
\end{equation}
\begin{definition}[AM-BV space \cite{Martio16-2}]
Let \((\X,\sfd,\mm)\) be a metric measure space.
Fix a function \(f\in L^1(\mm)\). Then we say that \(f\) belongs to
the space \({\rm BV}_{AM}(\X)\) provided there exist a
representative \(\bar f\in\mathcal L^1(\mm)\) of \(f\) and a
\(BV_{AM}\) upper bound \((g_i)_i\) for \(\bar f\) such that
\[
\limi_{i\to\infty}\int g_i\,\d\mm<+\infty.
\]
Given any \(f\in{\rm BV}_{AM}(\X)\) and \(\Omega\subseteq\X\) open,
we define
\begin{equation}\label{eq:def_tvm_AM_on_open}
|{\boldsymbol D}f|_{AM}(\Omega)\coloneqq\inf_{\bar f,(g_i)_i}
\limi_{i\to\infty}\int_\Omega g_i\,\d\mm,
\end{equation}
where the infimum is taken among all representatives
\(\bar f\in\mathcal L^1(\mm)\) of \(f\) and all \(BV_{AM}\)
upper bounds \((g_i)_i\) for \(\bar f\) on \(\Omega\).
\end{definition}

As usual, the set-function \(|{\boldsymbol D}f|_{AM}\) defined in
\eqref{eq:def_tvm_AM_on_open} can be extended to all Borel sets via
Carath\'{e}odory construction, as follows:
\begin{equation}\label{eq:def_tvm_AM}
|{\boldsymbol D}f|_{AM}(B)\coloneqq
\inf\big\{|{\boldsymbol D}f|_{AM}(\Omega)\,:\,
\Omega\subseteq\X\text{ open},\,B\subseteq\Omega\big\},
\quad\text{ for every }B\subseteq\X\text{ Borel.}
\end{equation}
As proven in \cite{Martio16-2}, it holds that \(|{\boldsymbol D}f|_{AM}\)
as in \eqref{eq:def_tvm_AM} is a finite Borel measure on \((\X,\sfd)\).
\begin{theorem}[\({\rm BV}_{AM}(\X)={\rm BV}(\X)\)]
\label{thm:equiv_BV_AM}
Let \((\X,\sfd,\mm)\) be a metric measure space. Then it holds
\[
{\rm BV}_{AM}(\X)={\rm BV}(\X),\qquad|{\boldsymbol D}f|_{AM}=
|{\boldsymbol D}f|\;\text{ for every }f\in{\rm BV}(\X).
\]
\end{theorem}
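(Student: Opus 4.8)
The plan is to prove the two inclusions ${\rm BV}(\X)\subseteq{\rm BV}_{AM}(\X)$ and ${\rm BV}_{AM}(\X)\subseteq{\rm BV}(\X)$ separately, in each case with a one-sided estimate of the total variations on open sets, and then to upgrade the resulting coincidence of the two function spaces to the coincidence of the two total-variation \emph{measures} by outer regularity. For the first inclusion, fix $f\in{\rm BV}(\X)$ and an open set $\Omega\subseteq\X$, and let $(f_n)_n\subseteq{\rm LIP}_{loc}(\Omega)\cap L^1(\mm|_\Omega)$ be the sequence furnished by Theorem \ref{thm:equiv_BV}, so $f_n\to f$ in $L^1(\mm|_\Omega)$ and $\int_\Omega\lip_a(f_n)\,\d\mm\to|{\boldsymbol D}f|(\Omega)$. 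I would first record an $AM_\Omega$-analogue of Fuglede's lemma: if $u_k\ge0$ are Borel on $\Omega$ with $\int_\Omega u_k\,\d\mm\to0$, then, after passing to a subsequence with $\int_\Omega u_k\,\d\mm\le 2^{-k}$ and setting $v\coloneqq\sum_k u_k\in L^1(\mm|_\Omega)$, the family $\{\gamma\in\mathcal C(\Omega):\int_\gamma v=+\infty\}$ is $AM_\Omega$-null (test it with the constant sequences $(v/M)_i$ and let $M\to\infty$), hence $\int_\gamma u_k\to0$ for $AM_\Omega$-a.e.\ $\gamma$. Applying this with $u_k\coloneqq|f_k-f|$, along the corresponding subsequence one gets $f_k\circ\gamma\to f\circ\gamma$ in $L^1(I_\gamma)$ for $AM_\Omega$-a.e.\ constant-speed $\gamma\in\mathcal C(\Omega)$; lower semicontinuity of the total variation then yields $|D(f\circ\gamma)|([a,b])\le\limi_k|D(f_k\circ\gamma)|([a,b])$ for all $a<b$ in $I_\gamma$, while each $f_k\circ\gamma$ is absolutely continuous on $[a,b]$ with $|(f_k\circ\gamma)'|\le\lip_a(f_k)(\gamma_\cdot)\,|\dot\gamma_\cdot|$ a.e., so $|D(f_k\circ\gamma)|([a,b])\le\int_{\gamma|_{[a,b]}}\lip_a(f_k)$. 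Thus $(\lip_a(f_k))_k$ is a $BV_{AM}$ upper bound for $f$ on $\Omega$ in the equivalent form \eqref{eq:def_AM_upper_bound_alt}, whence $|{\boldsymbol D}f|_{AM}(\Omega)\le\limi_k\int_\Omega\lip_a(f_k)\,\d\mm=|{\boldsymbol D}f|(\Omega)$; taking $\Omega=\X$ gives $f\in{\rm BV}_{AM}(\X)$.

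For the reverse inclusion I would route through the curvewise space and invoke Corollary \ref{cor:equiv_BV_cw}. Fix $f\in{\rm BV}_{AM}(\X)$, an open set $\Omega$, and $\varepsilon>0$; pick a representative $\bar f$ and a $BV_{AM}$ upper bound $(g_i)_i$ for $\bar f$ on $\Omega$ with $\limi_i\int_\Omega g_i\,\d\mm\le|{\boldsymbol D}f|_{AM}(\Omega)+\varepsilon$, and extend each $g_i$ by zero outside $\Omega$. The claim is that $(\1_\Omega g_i)_i$ is a curvewise bound for $f$ on $\Omega$ in the sense of Definition \ref{def:cw_bound}. Let $\Gamma^\Omega_0\subseteq\mathcal C(\Omega)$ be the $AM_\Omega$-null set of curves for which \eqref{eq:def_AM_upper_bound_alt} fails. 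Fixing a $\infty$-test plan $\ppi$ and rationals $0<p<q<1$, every $\gamma\in C([0,1],\X)$ with $\gamma([p,q])\subseteq\Omega$, with $\gamma|_{[p,q]}$ non-constant, and with $\gamma|_{[p,q]}\in\Gamma^\Omega_0$ owns the subcurve $\gamma|_{[p,q]}\in\Gamma^\Omega_0$; hence, by Remark \ref{rmk:behaviour_AM_restr} together with $AM\le AM_\Omega$, this family has $AM$-measure zero, so Lemma \ref{lem:null_AM_vs_tp} gives it a $\ppi$-null Borel superset, and the countable union $N_\ppi$ over all rational $p<q$ of these supersets is still $\ppi$-null. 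For $\gamma\notin N_\ppi$ and $0<a<b<1$ with $\gamma((a,b))\subseteq\Omega$, choosing rationals $a<p<q<b$ (so that $\gamma|_{[p,q]}$ is either constant, making the bound trivial, or good), inequality \eqref{eq:def_AM_upper_bound_alt} applied on $[p,q]$ together with the approximation of $(a,b)$ from inside by such intervals $[p,q]$ yields $|D(f\circ\gamma)|((a,b))\le\limi_i\int_a^b(\1_\Omega g_i)(\gamma_t)|\dot\gamma_t|\,\d t$ (the inequalities being read in $[0,+\infty]$), which proves the claim. Taking $\Omega=\X$ then gives $f\in{\rm BV}^{\sf cw}(\X)={\rm BV}(\X)$ by Corollary \ref{cor:equiv_BV_cw}, and moreover $|{\boldsymbol D}f|(\Omega)=|{\boldsymbol D}f|^{\sf cw}(\Omega)\le\limi_i\int_\Omega g_i\,\d\mm\le|{\boldsymbol D}f|_{AM}(\Omega)+\varepsilon$ by \eqref{eq:def_curvewise_BV} and Corollary \ref{cor:equiv_BV_cw}; letting $\varepsilon\to0$ gives $|{\boldsymbol D}f|(\Omega)\le|{\boldsymbol D}f|_{AM}(\Omega)$.

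Combining the two steps, the $L^1$-spaces coincide and $|{\boldsymbol D}f|_{AM}(\Omega)=|{\boldsymbol D}f|(\Omega)$ for every $f\in{\rm BV}(\X)$ and every open $\Omega$; since $|{\boldsymbol D}f|$ is outer regular and $|{\boldsymbol D}f|_{AM}$ is extended to Borel sets by the Carath\'eodory formula \eqref{eq:def_tvm_AM}, the identity propagates to all Borel sets, so $|{\boldsymbol D}f|_{AM}=|{\boldsymbol D}f|$ as measures. I expect the main obstacle to be the bookkeeping in the second step: reconciling the parametrisation-free curve family $\mathcal C(\Omega)$ underlying $AM_\Omega$ with the curves in $C([0,1],\X)$ seen by test plans, and in particular honouring the restriction ``$\gamma((a,b))\subseteq\Omega$'' of Definition \ref{def:cw_bound} by passing to subcurves — but Remark \ref{rmk:behaviour_AM_restr} and Lemma \ref{lem:null_AM_vs_tp} are precisely the tools designed for this. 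A secondary, routine point is that the $AM$-version of Fuglede's lemma used in the first step is not isolated in the text and has to be proved along the way.
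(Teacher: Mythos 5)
Your proposal is correct and follows essentially the same route as the paper: the inclusion \({\rm BV}(\X)\subseteq{\rm BV}_{AM}(\X)\) via the locally Lipschitz approximation of Theorem \ref{thm:equiv_BV} together with a Fuglede-type lemma, and the reverse inclusion by showing that a \(BV_{AM}\) upper bound is a curvewise bound, passing through \({\rm BV}^{\sf cw}(\X)\) via Remark \ref{rmk:behaviour_AM_restr}, Lemma \ref{lem:null_AM_vs_tp}, and Theorem \ref{thm:equiv_BV_cw}. The only deviations are cosmetic: you prove the \(AM\)-version of Fuglede's lemma inline (the paper cites \cite[Lemma 2.1]{Bjorn-Bjorn11}), and you handle the subcurve bookkeeping with rational subintervals and a countable union of null sets rather than saturating the exceptional family under supercurves as the paper does.
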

\begin{proof}
By virtue of Theorem \ref{thm:equiv_BV_cw}, it suffices to show
\({\rm BV}(\X)\subseteq{\rm BV}_{AM}(\X)\subseteq{\rm BV}^{\sf cw}(\X)\) and
\[
|{\boldsymbol D}f|^{\sf cw}(\Omega)\leq|{\boldsymbol D}f|_{AM}(\Omega)
\leq|{\boldsymbol D}f|(\Omega),\quad\text{ for every }f\in{\rm BV}(\X)
\text{ and }\Omega\subseteq\X\text{ open.}
\]
We will prove it in two steps.\\
{\color{blue}\textsc{Step 1.}} First, we want to prove that
\({\rm BV}(\X)\subseteq{\rm BV}_{AM}(\X)\), and that
\(|{\boldsymbol D}f|_{AM}(\Omega)\leq|{\boldsymbol D}f|(\Omega)\)
for every \(f\in{\rm BV}(\X)\) and \(\Omega\subseteq\X\) open.
Thanks to Theorem \ref{thm:equiv_BV}, we can find a sequence
\((f_i)_i\subseteq\LIP_{loc}(\Omega)\cap L^1(\mm|_\Omega)\)
such that \(f_i\to f\) in \(L^1(\mm|_\Omega)\) and
\(\int_\Omega\lip_a(f_i)\,\d\mm\to|{\boldsymbol D}f|(\Omega)\).
Fix a representative \(\bar f\in\mathcal L^1(\mm|_\Omega)\) of \(f\).
It follows from Fuglede's lemma \cite[Lemma 2.1]{Bjorn-Bjorn11} that
(up to a not relabelled subsequence) it holds that
\(\int_\gamma|f_i-\bar f|\to 0\) as \(i\to\infty\)
for \(AM_\Omega\)-a.e.\ \(\gamma\). In particular, we have that
\(f_i\circ\gamma\to\bar f\circ\gamma\) strongly in \(L^1(0,1)\)
for \(AM_\Omega\)-a.e.\ \(\gamma\) having constant speed.
By using the lower semicontinuity of the total variation measures,
we thus obtain that
\[\begin{split}
|D(\bar f\circ\gamma)|(I_\gamma)&\leq
\limi_{i\to\infty}|D(f_i\circ\gamma)|(I_\gamma)
=\limi_{i\to\infty}\int_{\min I_\gamma}^{\max I_\gamma}
|(f_i\circ\gamma)'_t|\,\d t\\
&\leq\limi_{i\to\infty}\int_{\min I_\gamma}^{\max I_\gamma}
\lip_a(f_i)(\gamma_t)|\dot\gamma_t|\,\d t
=\limi_{i\to\infty}\int_\gamma\lip_a(f_i).
\end{split}\]
This shows that \(\big(\lip_a(f_i)\big)_i\) is a \(BV_{AM}\)
upper bound for \(\bar f\) on \(\Omega\). Therefore,
we conclude that
\(|{\boldsymbol D}f|_{AM}(\Omega)\leq\limi_i\int_\Omega
\lip_a(f_i)\,\d\mm=|{\boldsymbol D}f|(\Omega)\), as desired.\\
{\color{blue}\textsc{Step 2.}} Next we aim to prove that
\({\rm BV}_{AM}(\X)\subseteq{\rm BV}^{\sf cw}(\X)\), and that
\(|{\boldsymbol D}f|^{\sf cw}(\Omega)\leq|{\boldsymbol D}f|_{AM}(\Omega)\)
for every \(f\in{\rm BV}_{AM}(\X)\) and \(\Omega\subseteq\X\) open.
Given any \(\varepsilon>0\), pick a representative
\(\bar f\in\mathcal L^1(\mm)\) of \(f\) and a \(BV_{AM}\)
upper bound \((g_i)_i\) for \(\bar f\) on \(\Omega\) such that
\(\limi_i\int_\Omega g_i\,\d\mm\leq|{\boldsymbol D}f|_{AM}(\Omega)
+\varepsilon\). Fix a family \(\Gamma\subseteq\mathcal C(\Omega)\)
such that \(AM(\Gamma)=0\) and \eqref{eq:def_AM_upper_bound_alt}
holds for all \(\gamma\notin\Gamma\). In light of Remark
\ref{rmk:behaviour_AM_restr}, we can also assume without loss of
generality that if \(\sigma\in\Gamma\), then any curve
\(\gamma\in\mathcal C(\X)\) having \(\sigma\) as a subcurve
belongs to \(\Gamma\). Now let \(\ppi\) be a given \(\infty\)-test
plan on \((\X,\sfd,\mm)\). By applying Lemma \ref{lem:null_AM_vs_tp},
we can find a Borel set \(\Gamma_0\subseteq C([0,1],\X)\) such that
\(\Gamma\cap C([0,1],\X)\subseteq\Gamma_0\) and \(\ppi(\Gamma_0)=0\).
Now fix \(\gamma\in\LIP([0,1],\X)\setminus\Gamma_0\) and \(0<a<b<1\)
with \(\gamma((a,b))\subseteq\Omega\). Given that \(\gamma\notin\Gamma\),
we have that \(\gamma|_{[a,b]}\notin\Gamma\) as well, thus accordingly
\[
|D(\bar f\circ\gamma)|((a,b))\leq|D(\bar f\circ\gamma)|([a,b])
\overset{\eqref{eq:def_AM_upper_bound_alt}}\leq
\limi_{i\to\infty}\int_{\gamma|_{[a,b]}}g_i=
\limi_{i\to\infty}\int_a^b g_i(\gamma_t)|\dot\gamma_t|\,\d t.
\]
This shows that \((g_i)_i\) is a curvewise bound for \(f\)
on \(\Omega\). In particular, we deduce that
\[
|{\boldsymbol D}f|^{\sf cw}(\Omega)\leq
\limi_{i\to\infty}\int_\Omega g_i\,\d\mm
\leq|{\boldsymbol D}f|_{AM}(\Omega)+\varepsilon,
\]
whence by letting \(\varepsilon\searrow 0\) we conclude that
\(|{\boldsymbol D}f|^{\sf cw}(\Omega)\leq|{\boldsymbol D}f|_{AM}(\Omega)\), as desired.
\end{proof}
\section{Master test plan for \texorpdfstring{\(W^{1,1}\)}{W11}
on \texorpdfstring{\(\sf RCD\)}{RCD} spaces}\label{app:master_W11}
In \cite{DiMarinoPhD} many definitions of \(1\)-Sobolev spaces are presented and inclusions between them are discussed. Here, we consider a notion of space $W^{1,1}(\X)$ defined in duality with $\infty$-test plans (that, in \cite{DiMarinoPhD}, is denoted by $w-W^{1,1}(\X)$).
\begin{definition}[The space $W^{1,1}(\X)$]
Let $(\X,\sfd,\mm)$ be a metric measure space. We say that $f \in W^{1,1}(\X)$, provided $f \in L^1(\mm)$ and there exists $G \in L^1(\mm)$ non-negative, called $1$-weak upper gradient of $f$, so that
\[ |f(\gamma_1)-f(\gamma_0)| \le \int G(\gamma_t)|\dot \gamma_t|\, \d t, \qquad \ppi\text{-a.e.\ }\gamma,\]
for every $\infty$-test plan $\ppi$.

The $\mm$-a.e.\ minimal $G$ satisfying the above, denoted $|Df|_1$, is called minimal $1$-weak upper gradient.
\end{definition}
Notice that the well-posedness of the above definition follows from standard considerations as in Remark \ref{rem:BVwellposed}.
We claim now that $W^{1,1}(\X) \subseteq {\rm BV}(\X)$. Fix any $f \in W^{1,1}(\X)$. Given an arbitrary $\infty$-test plan $\ppi$, it is standard to see that for $\ppi$-a.e.\ $\gamma$ we have $f\circ \gamma \in W^{1,1}(0,1)$ and $(f\circ\gamma)'_t \le |Df|_1(\gamma_t)|\dot \gamma_t|$ for a.e.\ $t \in [0,1]$ (note, \emph{e.g.}, in \cite[Section 4.6]{DiMarinoPhD} the inclusion with the Beppo-Levi space $W_{BL}^{1,1}$). Moreover, for every $B\subseteq\X$ Borel and every $\infty$-test plan $\ppi$, we can therefore estimate
\[ 
\begin{split}
\int \gamma_\#|D(f\circ\gamma)|(B)\, \d \ppi &=\int\!\!\!\int_0^1  \1_{\gamma^{-1}(B)}(t)(f\circ\gamma)'(t)\, \d t\,\d \ppi \\
&\le \Lip(\ppi)\int\!\!\!\int_0^1 (\1_{B} |Df|_1)\circ \e_t\, \d t\,\d \ppi \\
&\le \Lip(\ppi)\int_B|Df|_1\, \d \mm.
\end{split}
\]
All in all, the above shows at the same time that $f \in {\rm BV}(\X)$ and $|{\boldsymbol D} f|\le |Df|_1\mm$.

Unfortunately, it is not always true that, if $f \in{\rm BV}(\X)$ with $|{\boldsymbol D} f| \ll \mm$, then $f$ belongs to $W^{1,1}(\X)$ and $\frac{\d |{\boldsymbol D} f|}{\d \mm}$ is a $1$-weak upper gradient. The reason being (see the discussion at the beginning of Section 4.6 and Example 4.5.4 in \cite{DiMarinoPhD}), that the BV-condition requires $f \circ \gamma$ to be only ${\rm BV}(0,1)$ along a.e.\ curve, while the $W^{1,1}$-condition  requires the composition $f \circ \gamma$ to be absolutely continuous. This discrepancy allows in general for the existence of counterexamples. Nevertheless, as proven in \cite{GigliHan14}, this is not the case in the ${\sf RCD}(K,N)$ setting \cite{Gigli12}. Given that the content of the current Appendix is used nowhere in the rest of the paper, we shall not provide the reader with the exact definition of the ${\sf RCD}(K,N)$-condition and refer to the references given in the Introduction. Here we will just use that they are also ${\sf CD}(K,N)$ spaces and that, thanks to \cite[Remark 3.5]{GigliHan14}, on ${\sf RCD}(K,N)$-spaces, for some $K \in \R,N\in [1,\infty)$, it holds that
\begin{equation}
f  \in{\rm BV}(\X)\text{ with } |{\boldsymbol D} f|\ll \mm \qquad\text{if and only if}\qquad f \in W^{1,1}(\X).\label{eq:RCDW11}
\end{equation}
Moreover, in this case, $|Df|_1= \frac{\d  | {\boldsymbol D} f|}{\d \mm}$ at $\mm$-a.e.\ point. Therefore, building on top of \cite{Deng20},\cite{ACCMCS20} and our Theorem \ref{thm:countable_master_tp}, we are then able to prove:
\begin{theorem}
Let $(\X,\sfd,\mm)$ be a ${\sf RCD}(K,N)$ space with $N<\infty$ and $\mm$ finite. Then, there exists a $\infty$-test plan, denoted by $\pi_{\sf m}$ and concentrated on geodesics, so that:

If $f, G \in L^1(\mm)$ are so that $f\circ \gamma \in W^{1,1}(0,1)$ for $\ppi_{\sf m}$-a.e.\ $\gamma \in AC([0,1],\X)$ and
\begin{equation} 
\Big|\frac{\d }{\d t}f(\gamma_t)\Big| \le G(\gamma_t)|\dot \gamma_t| \qquad (\ppi_{\sf m}\otimes \mathcal{L}_1)\text{-a.e.\ }(\gamma,t), \label{eq:W11master}
\end{equation}
then $f \in  W^{1,1}(\X)$ and $G$ is a $1$-weak upper gradient.
\end{theorem}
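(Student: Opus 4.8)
The plan is to deduce the statement from the curvewise BV machinery of Section~\ref{s:curvewise_BV} together with the identification \eqref{eq:RCDW11}. First I would fix $\ppi_{\sf m}$ to be precisely the master curvewise plan produced by Theorem~\ref{thm:master_cw_plan}: an ${\sf RCD}(K,N)$ space with $N<\infty$ is non-branching by \cite{Deng20}, and it is in particular a ${\sf CD}(K,N)$ space whose reference measure is finite, so Theorem~\ref{thm:master_cw_plan} provides a $\infty$-test plan $\ppi_{\sf m}$, concentrated on ${\rm Geo}(\X)$, with ${\rm BV}_{\sppi_{\sf m}}^{\sf cw}(\X)={\rm BV}(\X)$ and $|{\boldsymbol D}f|_{\sppi_{\sf m}}^{\sf cw}=|{\boldsymbol D}f|$ for every $f\in{\rm BV}(\X)$. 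This is the only place where the structural hypotheses on $\X$ enter.

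Next, given $f,G\in L^1(\mm)$ as in the statement, I would check that the constant sequence $g_n\coloneqq G$ is a curvewise $\{\ppi_{\sf m}\}$-bound for $f$ on $\X$ in the sense of Definition~\ref{def:cw_bound}. By Fubini's theorem, hypothesis \eqref{eq:W11master} gives that for $\ppi_{\sf m}$-a.e.\ $\gamma$ one has $f\circ\gamma\in W^{1,1}(0,1)$ with $|(f\circ\gamma)'_t|\leq G(\gamma_t)|\dot\gamma_t|$ for $\mathcal L_1$-a.e.\ $t$; hence for such $\gamma$ and every $0<a<b<1$ one gets $|D(f\circ\gamma)|((a,b))=\int_a^b|(f\circ\gamma)'_t|\,\d t\leq\int_a^b G(\gamma_t)|\dot\gamma_t|\,\d t$, which is exactly \eqref{eq:def_BV_bound} (no singular part and no boundary atoms appear, precisely because $f\circ\gamma\in W^{1,1}(0,1)$). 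Since $\sup_n\|g_n\|_{L^1(\mm)}=\|G\|_{L^1(\mm)}<+\infty$, this shows $f\in{\rm BV}_{\sppi_{\sf m}}^{\sf cw}(\X)$, and the same computation, run on an arbitrary open set $\Omega$, yields $|{\boldsymbol D}f|_{\sppi_{\sf m}}^{\sf cw}(\Omega)\leq\limi_{n\to\infty}\int_\Omega g_n\,\d\mm=\int_\Omega G\,\d\mm$. By the defining property of $\ppi_{\sf m}$ we conclude $f\in{\rm BV}(\X)$ and $|{\boldsymbol D}f|(\Omega)\leq\int_\Omega G\,\d\mm$ for every open $\Omega$, hence $|{\boldsymbol D}f|\leq G\mm$ by outer regularity of $|{\boldsymbol D}f|$; in particular $|{\boldsymbol D}f|\ll\mm$.

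Now I would invoke \eqref{eq:RCDW11}: since $f\in{\rm BV}(\X)$ with $|{\boldsymbol D}f|\ll\mm$, it follows that $f\in W^{1,1}(\X)$ and $|Df|_1=\frac{\d|{\boldsymbol D}f|}{\d\mm}$ at $\mm$-a.e.\ point; combined with $|{\boldsymbol D}f|\leq G\mm$ this gives $|Df|_1\leq G$ $\mm$-a.e. Finally, since $|Df|_1$ is a $1$-weak upper gradient of $f$, and since the bounded compression property forces $|Df|_1(\gamma_t)\leq G(\gamma_t)$ for $\mathcal L_1$-a.e.\ $t$ along $\ppi$-a.e.\ curve, for every $\infty$-test plan $\ppi$, we obtain $|f(\gamma_1)-f(\gamma_0)|\leq\int_0^1|Df|_1(\gamma_t)|\dot\gamma_t|\,\d t\leq\int_0^1 G(\gamma_t)|\dot\gamma_t|\,\d t$ for $\ppi$-a.e.\ $\gamma$; hence $G$ is a $1$-weak upper gradient of $f$, which is the claim.

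I do not expect a deep obstacle here: the content is the correct assembly of already-established pieces. The one point that genuinely needs care is the passage from the $(\ppi_{\sf m}\otimes\mathcal L_1)$-a.e.\ differential inequality \eqref{eq:W11master} to a \emph{bona fide} curvewise $\{\ppi_{\sf m}\}$-bound in the sense of Definition~\ref{def:cw_bound} — a Fubini argument relying on $f\circ\gamma\in W^{1,1}(0,1)$ — together with the observation that what is needed to apply \eqref{eq:RCDW11} is the absolute continuity $|{\boldsymbol D}f|\ll\mm$, which comes from the \emph{localised} curvewise estimate on open sets rather than from the mere finiteness of the total variation.
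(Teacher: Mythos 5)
Your argument is correct, and it rests on the same two pillars as the paper's own proof: the countable master family of geodesic test plans from Theorem \ref{thm:countable_master_tp} (available because \({\sf RCD}(K,N)\) spaces with \(N<\infty\) are non-branching) and the identification \eqref{eq:RCDW11}. The difference is in the middle. The paper builds \(\ppi_{\sf m}\) by re-running the convex-combination construction from the proof of Theorem \ref{thm:master_cw_plan} and uses only the resulting null-set equivalence (a Borel set of curves is \(\ppi_{\sf m}\)-negligible if and only if it is \(\ppi\)-negligible for every \(\ppi\) in the countable family \(\Pi\)); it then transfers \eqref{eq:W11master} to each plan of \(\Pi\), integrates to get \(f\in{\rm BV}_\Pi(\X)\) with \(|{\boldsymbol D}f|_\Pi\leq G\mm\), and concludes by the master-family property. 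You instead invoke Theorem \ref{thm:master_cw_plan} as a black box and check that the constant sequence \(g_n\coloneqq G\) is a curvewise \(\{\ppi_{\sf m}\}\)-bound, which works precisely because \(f\circ\gamma\in W^{1,1}(0,1)\) rules out singular parts and boundary atoms of \(|D(f\circ\gamma)|\). Your route buys a cleaner citation (a stated theorem rather than ``argue as in the proof of'') at the price of routing through the curvewise formalism of Section \ref{s:curvewise_BV}, which the paper's appendix bypasses; the absolute continuity \(|{\boldsymbol D}f|\ll\mm\) that you extract from the localised curvewise estimate is obtained in the paper directly from the measure inequality \(|{\boldsymbol D}f|_\Pi\leq G\mm\). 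One cosmetic point: to upgrade \(|{\boldsymbol D}f|(\Omega)\leq\int_\Omega G\,\d\mm\) on open sets to \(|{\boldsymbol D}f|\leq G\mm\) on all Borel sets you should also use outer regularity of \(G\mm\) (not only of \(|{\boldsymbol D}f|\)), which of course holds since \(G\mm\) is a finite Borel measure on a metric space.
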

\begin{proof}
Since ${\sf RCD}(K,N)$ spaces are non-branching \cite[Theorem 1.3]{Deng20}, we know from Theorem \ref{thm:countable_master_tp} that we can find
a countable collection $\Pi$ of $\infty$-test plans concentrated on geodesics which is a master family for \({\rm BV}(\X)\). An argument
as in the proof of Theorem \ref{thm:master_cw_plan} gives rise,
out of the countable collection $\Pi$, a single $\infty$-test plan
$\ppi_{\sf m}$ which is concentrated on geodesics with length at
most $1$ and satisfying the key property:
\[ \Gamma \text{ is } \ppi_{\sf m}\text{-negligible} \quad \iff\quad \Gamma \text{ is }\ppi\text{-negligible} \quad \forall \ppi \in \Pi, \]
for every Borel $\Gamma \subseteq C([0,1],\X)$. Finally, $f,G \in L^1(\mm)$ satisfy \eqref{eq:W11master} if and only if
\[
\Big|\frac{\d }{\d t}f(\gamma_t)\Big| \le G(\gamma_t)|\dot \gamma_t| \qquad (\ppi\otimes \mathcal{L}_1)\text{-a.e.\ }(\gamma,t), \ \forall \ppi \in \Pi.\]
This implies that for $\ppi$-a.e.\ $\gamma$, it holds  $f \circ \gamma \in{\rm BV}(0,1)$ (in fact, it
is absolutely continuous) with $|D(f\circ \gamma)|(I) \le \int_I G(\gamma_t)|\dot \gamma_t|\, \d t$, for every $I\subseteq [0,1]$ Borel and $\ppi \in \Pi$. Thus, we reach
\[ \begin{split}
\int \gamma_\# |D(f\circ \gamma)| (B)\, \d \ppi(\gamma) &\le  \int\!\!\!\int_0^1 \1_{\gamma^{-1}(B)}(t) G(\gamma_t)|\dot \gamma_t|\, \d t \, \d \ppi(\gamma) \\
&\leq \Lip(\ppi)\int\!\!\!\int_0^1 (\1_B G)\circ \e_t\, \d t \, \d \ppi\\
&\le {\rm Comp}(\ppi) {\rm Lip}(\ppi) \int_B G\, \d \mm,
\end{split}
 \]
 for every $B\subseteq \X$ Borel and $\ppi \in \Pi$. This means that $f \in{\rm BV}_\Pi(\X)$ and $|{\boldsymbol D} f|_\Pi \le G\mm$. By Theorem \ref{thm:countable_master_tp}, this immediately implies that $f \in {\rm BV}(\X)$ with $|{\boldsymbol D}f| \le G\mm$ and, recalling \eqref{eq:RCDW11}, the conclusion.
\end{proof}
\def\cprime{$'$} \def\cprime{$'$}

\end{document}